\theoremstyle{plain}
\newtheorem{theorem}{Theorem}[section]
\newtheorem{lemma}[theorem]{Lemma}
\newtheorem{proposition}[theorem]{Proposition}
\newtheorem{assumption}[theorem]{Assumption}
\theoremstyle{definition}
\newtheorem{definition}[theorem]{Definition}
\theoremstyle{remark}
\newtheorem{remark}[theorem]{Remark}
\newcommand{\defined}{\triangleq}
\newcommand{\de}{\mathop{}\!\mathrm{d}}
\DeclareMathOperator{\argmax}{argmax}
\newcommand\xqed[1]{%
	\leavevmode\unskip\penalty9999 \hbox{}\nobreak\hfill
	\quad\hbox{#1}}
\newcommand\remEnde{\xqed{$\triangle$}}
\begin{document}
	
	\title{Finite State Mean Field Games with Common Shocks}
	\author{
		Berenice Anne Neumann%
		\thanks{Trier University,
			Department IV -- Mathematics,
			Germany,
			e-mail: neumannb@uni-trier.de.}
		\and
		Frank T.\ Seifried%
		\thanks{Trier University,
			Department IV -- Mathematics, Germany,
			e-mail: seifried@uni-trier.de.}
	}
	\maketitle
	
	\begin{abstract}
		We present a novel framework for mean field games with finite state space and common noise, where the common noise is given through shocks that occur at random times. 
		We first analyze the game for up to $n$ shocks, in which case we are able to characterize mean field equilibria through a system of parameterized and coupled forward-backward equations.
		We establish existence and uniqueness of solutions to this system for small time horizons.
		In addition, we show that mean field equilibria for the $n$-shock setting constitute approximate equilibria for the corresponding mean field game with infinitely many common shocks.
		Our results are illustrated in a corruption detection model with random audits.\\[0.2cm]
		{\emph{Key words:} mean field games, common noise, common shocks, forward-backward system} \\[0.2cm]
		{\emph{ Mathematics Subject Classification:} 91A16, 91A15, 93E20, 60J27}
	\end{abstract}
	
	\allowdisplaybreaks
	
	\section{Introduction}
	
	Mean field games were first introduced by Lasry and Lions \cite{LasryJapanese2007} and Huang et al.\ \cite{HuangNCE2006} in order to make games with a large number of players mathematically tractable. 
	Namely, it is well known that $N$-player games with a large number $N$ of players are usually intractable even if the game is symmetric and the interactions of players are weak in the sense that the other players influence a representative player only through their distribution.
	However, if one considers mean field games with a continuum of agents, which are suitable limits of such $N$-player games as $N\to\infty$, equilibria can be described by forward-backward systems of partial differential equations or stochastic differential equations.
	These insights led to significant advances in many applications in economics and finance, but also in epidemiology, computer science and sociology.
	
	A central assumption for the mean field approach is that the random factors are independent across players, leading to a deterministic evolution of the aggregate distribution of all players.
	However, this assumption is not satisfied in a number of applications, since in many situations players are exposed to common stochastic factors like market risks, weather conditions or political decisions. 
	Models with such common factors have also been analysed and are called mean field games \emph{with common noise}. 
	The analysis of these models is much more complicated, as in this case equilibria are stochastic because they depend on the realizations of the common noise.
	Hence, the characterization of equilibria becomes much more complex, since the Hamilton-Jacobi-Bellman equations that characterize optimal controls have to account for the common noise and thus become stochastic.  
	Against this background, it is not surprising that very few mean field games with common noise have been solved analytically or even numerically. 
	
	In this paper we introduce a new mean field game setup that exhibits a form of common noise present in many applications, but at the same time remains tractable. The baseline is a classical finite state model closely related to \cite{GomesConti2013} and \cite{CecchinProbabilistic2018}. In this model we introduce common noise in the form of common shock events. These events occur according to doubly stochastic Poisson processes whose intensity may in general depend on the population's distribution of states. Moreover, whenever a common shock occurs, this can change the current state of the players, the dynamics of the players' state dynamics as well as the players' rewards. This captures many classical forms of common noise in applications like a change of policies, natural disasters, innovations, etc.
	
	The main contributions of the paper are as follows:
	First, we consider a variant of our problem, where at most a fixed finite number of shocks can occur. In this case we prove that we can reduce the problem of finding an equilibrium to solving a \emph{coupled} system of parameterized forward-backward equations.
	Second, we show that on small time horizons a unique solution to this system exists and moreover, that it can be determined through a fixed point iteration.
	Third, we show that equilibria for the game with up to $n$ shocks constitute $\epsilon_n$-equilibria of the mean field game with an unbounded number of shocks where $\epsilon_n = L c^n$ with $L>0$ and $c\in(0,1)$. Finally, we illustrate the findings in an extension of the corruption detection model of Kolokoltsov and Malafeyev \cite{KolokoltsovCorruption2017} with random audits as common shocks.  
	
	\subsection{Related Literature}
	
	Starting with the seminal papers \cite{LasryJapanese2007} and \cite{HuangNCE2006}, mean field game theory has become a rich mathematical theory. Starting with games on a continuous state space with diffusive dynamics, nowadays there is a large variety of different mean field game models. For an overview of the general theory for games on a continuous state space with diffusive dynamics see 
	\cite{BensoussanMFG, CardaliaguetLectureNotes, CarmonaMFG2018, CarmonaMFGPartTwo2018}. Moreover, as mentioned above, applications in various fields have been analysed. These include \cite{ParisPrinceton2010,CarmonaSystemicRisk,NutzCompetitionRD} (economics/finance), \cite{KolokoltsovBotnet2016} (computer science), \cite{AurellEpidemics,doncel_gast_gaujal_2022_sir,laguzet2015individual} (epidemiology) and  \cite{GomesSocio2014, KolokoltsovCorruption2017} (sociology), among others.
	
	Mean field games that include common noise have first been studied in diffusion settings. Early works include \cite{CarmonaCommonNoise} and \cite{AhujaCommonNoise}; for a general overview we refer the reader to \cite{CarmonaMFGPartTwo2018} as well as the recent works \cite{Cardaliaguet2022First_Order_Common} and \cite{Lacker2023Convergence_Common}, which provide a good overview on the recent progress for models with diffusive dynamics. We highlight that the focus in this strand of literature is mainly on existence and uniqueness as well as on the convergence problem. 
	Results regarding the (numerical) computation of equilibria are only available for particular classes of mean field games, where equilibria depend in a simple way on the realization of the common noise. Up to the knowledge of the authors, there are two different classes where the computation is possible:
	On the one hand,  if the common noise events can only occur at a finite number of predefined time points and can take only finitely many values \cite{BelakMFG,dumitrescu2023energy}. On the other hand, in examples where the structural properties allow for a simple dependence of the equilibrium on the common noise, in which case  usually a guess-and-verify approach is used (\cite{CarmonaSystemicRisk}, \cite[Section 4.5]{CarmonaMFGPartTwo2018}, \cite{escribe_renewable_investment} and \cite{lavigne2023decarbonization}). In this paper we go beyond these two model classes by considering general models with a common noise that is driven by doubly stochastic Poisson processes.
	
	The focus in the present paper lies on mean field games with continuous time and finite state space. Games without common noise have been analysed in \cite{BayraktarMasterEquation, CarmonaExtended, CecchinProbabilistic2018, CecchinMasterEquation, DoncelPaper, GomesConti2013, GueantCongestion2015, Neumann2020} among others; see also §7.2 in \cite{CarmonaMFG2018}. 
	Games with common noise have been analysed in \cite{BayraktarFiniteStateCommonNoise,BayraktarFiniteStateCommonNoise_2, BelakMFG, BertucciRemarks, Cecchin_Vanishing_Common_Noise, DelarueFiniteCommonNoise}. 
	In contrast to continuous state space models (where additive noise is usually considered) there is no classical form of common noise. 
	Most authors consider a common source of randomness that only influences the dynamics of the players.
	This happens in an additive way \cite{DelarueFiniteCommonNoise}, through joint (random) state transitions at (random) times \cite{BertucciRemarks} or by including a common continuous-time Brownian motion into the dynamics inspired by population genetics \cite{BayraktarFiniteStateCommonNoise,BayraktarFiniteStateCommonNoise_2,Cecchin_Vanishing_Common_Noise}.
	Again all these papers focus on existence, uniqueness, convergence, equilibrium selection as well as the study of the master equation.
	A different route is followed in \cite{BelakMFG}, where the common noise can only occur at a finite number of predetermined deterministic times and can take finitely many values.
	First, the effect of the common noise in this setup is more general, it might yield to instantaneous jumps, changes of the effects of the actions on the state dynamics as well as changes of the rewards.
	Moreover, the authors derive a system of forward-backward ODE systems, which proves to be numerically tractable. We highlight that although we follow a similar route as in \cite{BelakMFG}, our common noise is much more general as we allow the common shocks to occur at any time.

	The remainder of this paper is organized as follows: We set up the mean field model with a finite number of shocks in Section~\ref{sec:model}. In Section~\ref{sec:opt_aggre_fb} we establish a verification theorem for the individual agent's optimization problem and compute the dynamics of the conditional aggregate distribution. Moreover we derive the forward-backward system and state the corresponding existence and uniqueness result. In Section~\ref{sec:approximation} we address the mean field model with an unbounded number of shocks and establish that equilibria of the game with $n$ shocks are approximate equilibria for these games. Section~\ref{sec:corruption} illustrates our approach in a corruption detection model. Finally, Appendix~\ref{appendix:measure_change} provides the formal construction of the state dynamics, and Appendix~\ref{appendix:existence} provides the proof of the existence and uniqueness result.

	\section{Mean Field Model} \label{sec:model}
	
	First, we provide an informal description of our mean field model setup including common noise realizations: 
	Each agent controls her own state $X_t$, which takes values in a finite state space, identical for all agents.
	In addition, the agents collectively face finitely many common shock events that occur sequentially at random times.
	The individual players' state dynamics are modeled as follows: If at time $t$ the agent is in state $i$, the agents' aggregate distribution across states is $M_t$, and the agent chooses action $\alpha_t$, then her individual state switches from $i$ to $j$ with intensity $Q^{ij}(t,Z_t,M_t,\alpha_t)$.
	Here $Z_t$ denotes the number of common shocks that have occurred up to time $t$; and $\lambda^k(t,M_t)$ is the intensity of shock $k+1$ given $k$ shocks have occurred already.
	Moreover, if there is a shock at time $t$, the agent's state is relocated from state $i$ to $J^i(t,M_{t-})$. Given these state dynamics, the agent's goal is to maximize the reward\footnote{For ease of notation, here and throughout the paper we write $X_t$ instead of $X_{t-}$, $M_t$ instead of $M_{t-}$, etc., whenever it does not make a difference.}
	\[
	\mathbb{E}^\alpha \left[ \int_0^T \psi^{X_t}(t,Z_t,M_t,\alpha_t) \de t + \Psi^{X_T} (Z_T,M_T) \right]
	\]
	over all admissible strategies $\alpha$, where $\psi^i$ and $\Psi^i$ represent, respectively, the running and the terminal reward in state $i$.\footnote{More generally $Q^{ij}$, $\psi^i$ and $\Psi^i$ may also depend on the \textit{path} of the common shock events; see below.}
	This determines the agents' strategy $\alpha$, \textit{given} a stochastic process $M$ for the underlying aggregate distribution.
	To close the loop, note that the aggregate distribution $M_t$ in fact derives from the agents' individual states, being the distribution of $X$ under $\mathbb{P}^\alpha$.
	This implies the equilibrium condition
	\[
	\mathbb{P}^\alpha(X_t=\cdot \; |Z_s, s\le t) = M_t\quad\text{for all }t\in[0,T].
	\]
	Note that, since common shocks affect all agents simultaneously, the aggregate distribution depends on the realization of common shock events. In particular, in contrast to mean field games without common shocks, the dynamics of the aggregate distribution $M_t$ are stochastic.
	
	\subsection{Mathematical Setup}
	
	We fix a probability space $(\Omega,\mathfrak{A},\mathbb{P})$, a finite time horizon $T>0$ and a maximum total number of common shocks $n\in\mathbb{N}$.
	We refer to $\mathbb{P}$ as the \textit{reference probability measure}.
	The individual agent's state space is represented by $\mathbb{S}=\{1,\ldots,S\}$ and her action space by  $\mathbb{A}\subseteq\mathbb{R}^A$.
	We assume that the functions
	\begin{align*}
		\lambda &:[0,T] \times \mathcal{P}(\mathbb{S}) \rightarrow (0,\infty)^n \\
		Q &: [0,T] \times \{0, 1, \ldots, n\} \times \mathcal{P}(\mathbb{S}) \times \mathbb{A} \rightarrow \mathbb{R}^{S \times S} \\
		\psi &: [0,T] \times \{0, 1, \ldots, n\} \times \mathcal{P}(\mathbb{S}) \times \mathbb{A} \rightarrow \mathbb{R}^S \\
		\Psi &: \{0,1, \ldots, n\} \times \mathcal{P}(\mathbb{S}) \rightarrow \mathbb{R}^S \\
		J &:[0,T] \times \mathcal{P}(\mathbb{S}) \rightarrow \mathbb{S}^\mathbb{S}
	\end{align*} 
	are bounded and Borel measurable where $\mathcal{P}(\mathbb{S})$ denotes the probability simplex on $\mathbb{S}$. Moreover, $Q(t,k,m,a)\in\mathbb{R}^{S\times S}$ is an intensity matrix for all $t \in [0,T]$, $k \in \{0,1, \ldots, n\}$, $m \in \mathcal{P}(\mathbb{S})$ and $a \in\mathbb{A}$.
	
	The dynamics of the common shocks process $Z$ are given by
	\begin{equation}
		\label{eq:model_Z_dynamics}
		\de Z_t = \sum_{k=1}^n \mathbb{I}_{\{Z_{t-}=k-1\}} \de N_t^k
	\end{equation}
	where $Z_0=0$ and $N^k$, $k\in\{1,\ldots,n\}$ are standard (unit-intensity) Poisson processes.
	We denote the filtration generated by common shock events by $\mathfrak{G}=(\mathfrak{G}_t)_{t\in[0,T]}$ where
	\begin{equation}
		\label{eq:def_filtration_g}
		\mathfrak{G}_t \defined \sigma(Z_s : s \le t), \quad t \in [0,T].
	\end{equation}
	The dynamics of the individual agent's state process $X$ are given by
	\begin{equation}
		\label{eq:model_X_dynamics}
		\begin{aligned}
			\de X_t &= \sum_{i,j \in \mathbb{S}:i \neq j} \sum_{k=0}^n \mathbb{I}_{\{X_{t-}=i\}} \mathbb{I}_{\{Z_{t-}=k\}} (j-i) \de N_t^{ikj} \\
			&\quad + \sum_{i\in \mathbb{S}} \sum_{k=1}^n \mathbb{I}_{\{X_{t-}=i\}} \mathbb{I}_{\{Z_{t-}=k-1\}} \left( J^i(t,M_{t-})-i \right) \de N^k_t
		\end{aligned}
	\end{equation}
	where $X_0$ is a fixed $\mathbb{S}$-valued random variable and $M$ is a fixed exogenous $\mathfrak{G}$-adapted process.
	Here $N^{ikj}$, $i,j \in \mathbb{S}, i \neq j$, $k \in \{0,1,\ldots, n\}$ are standard (unit-intensity) Poisson processes such that 
	\[
	X_0, \quad (N^{ikj})_{i,j \in \mathbb{S}, i \neq j, k \in \{0,1, \ldots, n\}} \quad \text{and} \quad (N^k)_{k \in \{1,\ldots,n\}}
	\]
	are independent under the reference probability $\mathbb{P}$.\footnote{Note that $N^k$ (single upper index) and $N^{ikj}$ (triple upper index) are distinct processes.}
	Thus the counting processes $N^k$, $k \in \{1, \ldots,n\}$ and $N^{ikj}$, $i,j \in \mathbb{S}, i \neq j$, $k \in \{0,1, \ldots, n\}$ trigger the jumps of the common shocks process $Z$ and the agent's state process $X$, respectively.
	Specifically, the process $X$ jumps on jumps of $N^k$ as specified in the relocation function $J$, and $X$ jumps from $i$ to $j$ on jumps of $N^{ikj}$ whenever the current number of shocks is $k$.
	Note that this is a pathwise construction, and that $Z$ and $X$ have unit jump intensities under the reference probability $\mathbb{P}$; these intensities will be transformed by a suitable change of measure below to match those in the informal model description above.
	The associated filtration $\mathfrak{F}=(\mathfrak{F}_t)_{t \in [0,T]}$ is given by 
	\[
	\mathfrak{F}_t \defined \sigma \left(X_0, N^{ikj}_s, N^l_s: s \in [0,t]; i,j \in \mathbb{S}, i \neq j; k \in \{0,1, \ldots,n\}; l \in \{1,\ldots,n\} \right), \ t \in [0,T].
	\]
	
	The agent's strategy $\alpha$ is required to be non-anticipative, i.e.\ the time-$t$ decision is required to be a functional of $t$, the common shock path $Z_{(\cdot\wedge t)-}$ and the path of her state process $X_{(\cdot\wedge t)-}$ up to immediately before time $t$. 
	Note that each function $\alpha:[0,T]\times \{0,1,\ldots,n\}^{[0,T]} \times\mathbb{S}^{[0,T]} \rightarrow \mathbb{A}$ canonically induces an $\mathbb{A}$-valued process $(\alpha_t)_{t\in[0,T]}$ via
	\[
	\alpha_t = \alpha\left(t,Z_{(\cdot\wedge t)-}, X_{(\cdot \wedge t)-}\right),\quad t\in[0,T].
	\]
	The set of \textit{admissible strategies} is thus given by
	\[
	\mathcal{A} \defined \left\{ \alpha: [0,T]\times \{0,1,\ldots,n\}^{[0,T]}  \times \mathbb{S}^{[0,T]} \rightarrow \mathbb{A}: (\alpha_t)_{t\in[0,T]}\ \text{is $\mathfrak{F}$-predictable} \right\}.
	\]
	Given the common shocks process $Z$ and a fixed exogenous aggregate distribution process $M$, the individual agent aims to maximize the reward functional
	\begin{equation}
		\label{eq:Optimzation}
		\mathbb{E}^\alpha \left[ \int_0^T \psi^{X_t} (t, Z_t, M_t, \alpha_t)\de t + \Psi^{X_T}(Z_T,M_T) \right]\quad\text{over all }\alpha\in\mathcal{A}.
	\end{equation}
	Here $\mathbb{E}^\alpha$ denotes expectation with respect to the equivalent probability measure $\mathbb{P}^\alpha$ defined formally in \eqref{eq:DefPAlpha}.
	The characteristic property of $\mathbb{P}^\alpha$ is that $N^k$ has $\mathbb{P}^\alpha$-intensity $\lambda^k=\{\lambda^k(t,M_t)\}_{t\in[0,T]}$ on $\{t:Z_{t}=k-1\}$ and, analogously, $ N^{ikj}$ has $\mathbb{P}^\alpha$-intensity $\lambda^{ikj}=\{Q^{ij}(t,k,M_t,\alpha_t)\}_{t\in[0,T]}$ on $\{t:Z_{t}=k\}$. Expressly, under $\mathbb{P}^\alpha$ the common shocks process $Z$ jumps from $k-1$ to $k$ with intensity $\lambda^k$ and the agent's state process $X$ jumps from $i$ to $j$ with intensity $\lambda^{ikj}$ when $k$ common shocks have occurred.
	The construction of $\mathbb{P}^\alpha$ is obtained by a well-known change-of-measure argument, see e.g.\ \cite{BelakMFG}; we refer to Section~\ref{sec:measure_change_bounded} in the Appendix for details and the explicit definition of $\mathbb{P}^\alpha$.
	
	\begin{definition}
		\label{def:MFE}
		A \textit{mean field equilibrium} is a pair $(\hat\alpha,\hat M)$ consisting of an admissible strategy $\hat\alpha\in\mathcal{A}$ and a $\mathcal{P}(\mathbb{S})$-valued, $\mathfrak{G}$-adapted process $\hat M$ such that
		\begin{align*}
			&\mathbb{E}^{\hat\alpha} \left[ \int_0^T \psi^{X_t} (t, Z_t,\hat M_t, \hat\alpha_t) \de t + \Psi^{X_T}(Z_T,\hat M_T) \right] \\ 
			&= \sup_{\alpha\in\mathcal{A}} \mathbb{E}^{\alpha} \left[ \int_0^T \psi^{X_t} (t, Z_t, \hat M_t, \alpha_t)\de t + \Psi^{X_T}(Z_T,\hat M_T) \right]
		\end{align*}
		while at the same time
		\[
		\mathbb{P}^{\hat\alpha}(X_t =\cdot \;|\mathfrak{G}_t) = \hat M_t\quad\text{for all }t\in[0,T].
		\]
	\end{definition}
	
	We emphasize that, since common noise events affect all agents simultaneously, in equilibrium the aggregate distribution $\hat M_t$ will depend on the realizations of all common shocks $Z_s$, $s\le t$, in a possibly non-Markovian way.

	\section{Optimization, Aggregation and Forward-Backward System}\label{sec:opt_aggre_fb}
	
	To solve the mean field equilibrium problem, we proceed in three classical steps:
	First, we solve for the individual agent's optimal strategy \textit{given} a stochastically evolving aggregate distribution (verification, see Section~\ref{sec:equiv_Markov}).
	Second, we identify conversely the stochastic dynamics of the aggregate distribution \textit{given} all agents implement the same feedback strategy (aggregation, see Section~\ref{sec:aggregation}).
	Third and finally, we bring together the verification and aggregation steps to formulate the mean field forward-backward system (see Section~\ref{sec:fb_system}).
	The challenge here is that, in contrast to mean field games without common noise, the evolution of the equilibrium aggregate distribution is stochastic.
	
	\subsection{Optimal Control Problem and Verification}\label{sec:equiv_Markov}
	
	The key mathematical step in addressing the individual agent's optimization problem is to find an equivalent reformulation in terms of a Markovian system.
	To achieve this, we define the $\mathbb{R}^n$-valued $\mathfrak{G}$-adapted process $U=(U_t)_{t\in[0,T]}$ via
	\begin{align}
		\label{eq:DefinitionU}
		U_t^k \defined \int_0^t (s+1) \mathbb{I}_{\{Z_{s-}=k-1\}}  \de N^k_s -1,\quad t\in[0,T],\ k\in\{1,\ldots,n\}.
	\end{align} 
	
	Thus $U_t$ is a vector such that, if shock number $k$ has occurred before $t$, $U_t^k$ records the point of time when it happened; and $U_t^k=-1$ else, i.e.\ if shock number $k$ has not yet occurred at time $t$.
	It is clear that $U_0 = u_0 \defined (-1,\ldots,-1)$ and that $U$ takes values in
	\begin{align*}
		\mathbb{U}&\defined \left\{ u\in \left( \{-1\} \cup [0,T]\right)^n: u_1\le u_2\le \ldots\le u_{k-1}, \right.\\
		&\qquad \left. u_k=\cdots=u_n=-1\ \text{for some }k\in\{1,\dots,n\} \right\}.
	\end{align*}
	In particular, the process $(t,U_t)_{t\in[0,T]}$ takes values in the space $\text{TS}$ defined as follows:
	\begin{definition} We set
		\[
		\text{TS} \defined \left\{ (t,u) \in [0,T] \times \mathbb{U} : \max_{k\in\{1,\dots,n\}}u_k\le t \right\}
		\]
		and we say that a function $f:\text{TS} \rightarrow \mathbb{R}^S$ is \textit{regular} if for all $u\in\mathbb{U}$ the function $f(\cdot,u)$ is absolutely continuous on $[\max_{k\in\{1,\dots,n\}}u_k,T]$.
	\end{definition}
	
	Since there is a one-to-one correspondence between $U_t$ and $(Z_s)_{s\in[0,t]}$,\footnote{In detail: For $s\le t$ we have $Z_s=0$ if and only if ($s < U_t^1$ or $U_t^1 = -1$); and $Z_s=k$ if and only if $U_t^{k}\le s$ and ($ U_t^{k+1}=-1$ or $U_t^{k+1} > s$) for all $k\in\{1,\ldots,n\}$.} it follows that
	\[
	\mathfrak{G}_t = \sigma(U_t),\quad t\in[0,T].
	\]
	In particular, every $\mathfrak{G}_t$-measurable random variable, such as the aggregate distribution $M_t$, can be represented as a measurable function of $U_t$.
	
	\begin{definition}
		Let $u\in\mathbb{U}$. We define $Z(u)$ as the $k \in \{0,1, \ldots, n\}$ such that $u^{l} \neq -1$ for all $l\le k$ and $u^{l}=-1$ for all $k+1 \le l \le n$.
		If $Z(u) < n$ and $t\ge u^{Z(u)}$, we define $\overrightarrow{(u,t)}\in\mathbb{U}$ by $\overrightarrow{(u,t)}^l = u^l$ for all $l\neq Z(u) +1$ and $\overrightarrow{(u,t)}^{Z(u)+1}=t$. 
		Moreover, if $Z(u)>0$ and $t=u^{Z(u)}$ we define $\overleftarrow{u}\in\mathbb{U}$ such that $\overleftarrow{u}^l = u^l$ for $l \neq Z(u)$ and $\overleftarrow{u}^{Z(u)} = -1$.
	\end{definition}
	
	We note that $Z(U_t)=Z_t$, i.e.\ $Z(U_t)$ represents the total number of shocks occurred up to and including time $t$. Moreover, we have $U_t =\overrightarrow{(U_{t-},t)}$ on $\{\Delta Z_t \neq 0\}$, i.e.\ $\overrightarrow{(U_{t-},t)}$ describes the vector $U_t$ if a shock occurs at time $t$. Analogously, $U_{t-} =\overleftarrow{U_t}$ on $\{\Delta Z_t \neq 0\}$, i.e.\  $\overleftarrow{U_t}$ describes the vector $U_{t-}$ if a shock occurs at time $t$.
	
	We now provide an equivalent Markovian reformulation of the agent's optimization problem \eqref{eq:Optimzation}, taking as given a fixed Markovian representation of the aggregate distribution process; i.e., a regular function $\mu: \text{TS} \rightarrow \mathcal{P}(\mathbb{S})$ such that $M_t = \mu(t,U_t)$ for $t\in[0,T]$. We take a classical dynamic programming approach.
	
	\begin{assumption}
		\label{assumption:H}
		There exists a Borel measurable function $h:\text{TS} \times \mathcal{P}(\mathbb{S}) \times \mathbb{R}^S \rightarrow \mathbb{A}^S$ such that for all $i \in \mathbb{S}$, $(t,u) \in \text{TS}$, $m \in \mathcal{P}(\mathbb{S})$ and $v \in \mathbb{R}^S$ we have
		\[
		h^i(t,u,m,v) \in \argmax_{a \in \mathbb{A}} \left\{
		\psi^i(t,Z(u),m,a) + \sum_{j \in \mathbb{S}} Q^{ij}(t,Z(u),m,a)v^j \right\}.		
		\]
	\end{assumption}
	
	For instance, Assumption~\ref{assumption:H} holds whenever $\mathbb{A}$ is compact and $Q$ and $\psi$ are continuous with respect to $a\in\mathbb{A}$. 
	Given a function $h$ satisfying Assumption~\ref{assumption:H} we define
	\[
	\hat{Q}: \text{TS} \times \mathcal{P}(\mathbb{S}) \times \mathbb{R}^S \rightarrow \mathbb{R}^{S \times S},\quad 
	\hat{Q}^{ij}(t,u,m,v) \defined Q^{ij}(t,Z(u),m,h^i(t,u,m,v)) 
	\]
	and
	\[
	\hat{\psi}:\text{TS} \times \mathcal{P}(\mathbb{S}) \times \mathbb{R}^S \rightarrow \mathbb{R}^S,\quad
	\hat\psi^i(t,u,m,v) \defined  \psi^i(t,Z(u),m,h^i(t,u,m,v)).
	\]
	With this notation, we can state the reduced dynamic programming equation associated with the individual agent's optimization problem.
	\begin{definition} Let $\mu: \text{TS} \rightarrow \mathcal{P}(\mathbb{S})$ be regular. 
		A regular function $v: \text{TS} \rightarrow \mathbb{R}^S$ is a solution of \eqref{eq:DP_mu} if it satisfies the ODE\footnote{All ODEs in this paper are taken in the sense of Carathéodory, see \cite[§1]{FilippovDiscontiODE1988}. Briefly, $x:[t_0,t_1] \rightarrow \mathbb{R}$ is a solution of $\dot{x}(t) = f(t,x(t))$, $x(t_0)=x_0$ in the sense of Carathéodory if $x$ is absolutely continuous and satisfies $x(t) = x_0 + \int_{t_0}^t f(s,x(s)) \de s$ for all $t \in [t_0, t_1]$.}
		\begin{equation}
			\label{eq:DP_mu}
			\tag{$\text{DP}_\mu$}
			\begin{split}
				&-\hat{\psi}^i(t,u,\mu(t,u),v(t,u)) \\
				& = \frac{\partial}{\partial t} v^{i}(t,u) + \sum_{j \in \mathbb{S}} \hat{Q}^{ij}(t,u,\mu(t,u), v(t,u)) v^j(t,u)  \\
				& \quad +  \mathbb{I}_{\{Z(u)<n\}} \lambda^{Z(u)+1}(t, \mu(t,u)) \left(v^{J^i(t,\mu(t-,u))}\left(t,\overrightarrow{(u,t)}\right) - v^i(t,u) \right)  \\
			\end{split}
		\end{equation}
		for all $i \in \mathbb{S}$ and $(t,u) \in \text{TS}$ subject to the terminal condition
		\[v(T,u) = \Psi(Z(u),\mu(T,u)).\]
	\end{definition}
	
	\begin{remark}
		\label{remark:ODEv}
		The system \eqref{eq:DP_mu} is a family of ODEs parameterized by $u\in\mathbb{U}$.
		We show in Lemma~\ref{lemma:appendix_existence_v} that, under natural continuity conditions, it admits a unique solution.
		Moreover, this solution can be found recursively:
		For $u\in\mathbb{U}$ with $Z(u)=n$ we face a classical ODE that does not depend on $v(\cdot,\tilde u)$ for any other parameter $\tilde u\in\mathbb{U}$.
		Having solved for all $v(\cdot,u)$ with $Z(u)=n$, in the next step the equations for $v(\cdot,u)$ with $Z(u)=n-1$ are classical ODEs depending on the (known) functions $v(\cdot,\bar u)$ with $Z(\bar u)=n$.
		Since one can show that $v(\cdot,u)$ is continuous in $u^n$, i.e.\ the time of the last shock (see the proof of Lemma~\ref{lemma:appendix_existence_v}), the right-hand sides of these ODEs are again continuous in $t$ and we obtain a solution to the ODEs.
		Proceeding in this fashion, the system can be solved recursively. \remEnde
	\end{remark}
	
	\begin{theorem}[verification]
		\label{thm:Verification}
		Let $\mu: \text{TS} \rightarrow \mathcal{P}(\mathbb{S})$ be a regular function and suppose $v$ is a solution of \eqref{eq:DP_mu}. Then $v$ is the agent's value function for the optimization problem, i.e. 
		\[
		\sum_{i \in \mathbb{S}} \mathbb{P}(X_0=i) v^i(0,u_0) = \sup_{\alpha \in \mathcal{A}} \mathbb{E}^\alpha \left[ \int_0^T \psi^{X_t} (t, Z_t, M_t, \alpha_t) \de t + \Psi^{X_T}(Z_T,M_T) \right]
		\]
		where $M_t= \mu(t,U_t)$ for $t\in[0,T]$, and an optimal control is given by 
		\begin{equation}\label{eq:strategy}
			\hat\alpha(t, Z_{(\cdot \wedge t)-},X_{(\cdot \wedge t)-}) = h^{X_{t-}} (t, U_{t-}, \mu(t, U_{t-}), v(t,U_{t-})).
		\end{equation}
	\end{theorem}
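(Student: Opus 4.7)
The plan is a classical verification argument based on the It\^o--semimartingale decomposition of the composite process $v^{X_t}(t, U_t)$ under $\mathbb{P}^\alpha$. For an arbitrary admissible $\alpha \in \mathcal{A}$, set
\[
Y_t \defined v^{X_t}(t, U_t), \qquad R_t \defined \int_0^t \psi^{X_s}(s, Z_s, M_s, \alpha_s)\, \de s.
\]
The goal is to show that $Y + R$ is a $\mathbb{P}^\alpha$-supermartingale for every such $\alpha$, and a $\mathbb{P}^\alpha$-martingale when $\alpha = \hat\alpha$ is the feedback in \eqref{eq:strategy}. Given the terminal condition $Y_T = \Psi^{X_T}(Z_T, M_T)$ and the fact that $\mathbb{P}^\alpha$ coincides with $\mathbb{P}$ on $\mathfrak{F}_0$, taking $\mathbb{E}^\alpha$ at $t=0$ and $t=T$ then immediately yields both the asserted upper bound and its attainment.

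To produce the decomposition I would exploit that $(X, U)$ is a pure-jump process with two disjoint families of events. Since $v$ is regular in $t$, the absolutely continuous part contributes $\partial_t v^{X_t}(t, U_t)\, \de t$. An idiosyncratic jump triggered by $N^{ikj}$ changes $X$ from $i$ to $j$ while leaving $U$ fixed, producing $v^j(t, U_{t-}) - v^i(t, U_{t-})$ with $\mathbb{P}^\alpha$-compensator $\mathbb{I}_{\{X_{t-}=i,\,Z_{t-}=k\}} Q^{ij}(t, k, M_t, \alpha_t)\, \de t$. A common shock triggered by $N^k$ simultaneously moves $U$ to $\overrightarrow{(U_{t-}, t)}$ and $X$ to $J^i(t, \mu(t, U_{t-}))$, producing the jump $v^{J^i(t, \mu(t, U_{t-}))}\bigl(t, \overrightarrow{(U_{t-}, t)}\bigr) - v^i(t, U_{t-})$ with $\mathbb{P}^\alpha$-compensator $\mathbb{I}_{\{Z_{t-}=k-1\}} \lambda^k(t, M_t)\, \de t$. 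Collecting all contributions and using $\sum_j Q^{ij} = 0$ gives $Y_t = Y_0 + \int_0^t G_s\, \de s + L_t$, where $L$ is a $\mathbb{P}^\alpha$-local martingale and
\[
G_t = \partial_t v^{X_t}(t, U_t) + \sum_{j} Q^{X_t j}(t, Z_t, M_t, \alpha_t) v^j(t, U_t) + \mathbb{I}_{\{Z_t<n\}} \lambda^{Z_t+1}(t, M_t)\Bigl(v^{J^{X_t}(t, \mu(t, U_t))}\bigl(t, \overrightarrow{(U_t, t)}\bigr) - v^{X_t}(t, U_t)\Bigr).
\]

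I would then combine this with Assumption~\ref{assumption:H}, which says that $h^i(t, u, m, v)$ maximizes $a \mapsto \psi^i(t, Z(u), m, a) + \sum_j Q^{ij}(t, Z(u), m, a) v^j$. Invoking the ODE \eqref{eq:DP_mu} at $i = X_t$, $u = U_t$ to rewrite the maximal value of the Hamiltonian as $-\partial_t v^{X_t}(t, U_t) - \mathbb{I}_{\{Z_t<n\}}\lambda^{Z_t+1}(t, M_t)(\cdots)$, the maximality inequality transforms into $G_t + \psi^{X_t}(t, Z_t, M_t, \alpha_t) \le 0$, with equality when $\alpha_t = h^{X_t}(t, U_t, \mu(t, U_t), v(t, U_t))$. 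Hence $\de(Y_t + R_t) \le \de L_t$, with equality under $\hat\alpha$. Boundedness of $v$ on $\text{TS}$, which follows from the recursive structure in Remark~\ref{remark:ODEv} together with the standing boundedness of $\psi$, $\Psi$, $Q$ and $\lambda$, and boundedness of the compensators upgrade $L$ to a true $\mathbb{P}^\alpha$-martingale; expectations between $0$ and $T$ then conclude.

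The main obstacle I anticipate is the careful bookkeeping at shock times: a single jump of $N^k$ simultaneously moves $U$ (via $\overrightarrow{\cdot}$) and $X$ (via the deterministic relocation $J$), so the corresponding jump of $Y$ cannot be split into two independent one-dimensional contributions and must be treated as a single marked-point-process event. Its $\mathbb{P}^\alpha$-compensator has to be read off from the explicit change-of-measure construction in Appendix~\ref{appendix:measure_change}. Once this bookkeeping is in place the remainder is a routine Hamilton--Jacobi--Bellman dominance argument; admissibility of $\hat\alpha$ is immediate because the right-hand side of \eqref{eq:strategy} is a deterministic Borel function of the paths of $X$ and $Z$ up to time $t-$.
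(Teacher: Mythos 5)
Your proposal is correct and follows essentially the same route as the paper's proof: apply It\^o's formula to $v^{X_t}(t,U_t)$, split the jumps into the idiosyncratic family $N^{ikj}$ (which move $X$ while fixing $U$) and the common-shock family $N^k$ (which move $U$ and relocate $X$ simultaneously, as you correctly flag as the delicate bookkeeping point), read off the $\mathbb{P}^\alpha$-compensators from Proposition~\ref{Thm:ChangeOfMeasure}, compare the resulting drift with the right-hand side of \eqref{eq:DP_mu} via Assumption~\ref{assumption:H} to obtain the supermartingale/martingale dichotomy, and use boundedness of $v$ and $Q$ to pass from local to true martingales before taking expectations at $0$ and $T$. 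Your supermartingale/$G_t$ packaging is a cosmetic reorganization of the same inequality chain the paper writes out directly.
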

	
	The strategy in Theorem~\ref{thm:Verification} is Markov with respect to the agent's state and $U_t$. More generally, such strategies are represented via the feedback form
	\begin{equation}
		\label{eq:Definition_Feedback_Strategy}
		\alpha(t,Z_{(\cdot\wedge t)_-},X_{(\cdot \wedge t)_-}) = \nu(t,U_{t-},X_{t-}),\quad t\in[0,T]
	\end{equation}
	where $\nu: \text{TS} \times \mathbb{S} \rightarrow \mathbb{A}$ is Borel measurable.
	Note that these feedback strategies need not be Markov with respect to common shocks, as $U_t$ captures the information of the entire path $(Z_s)_{s\in[0,t]}$; in particular, it is possible for the agent to condition her strategy on the exact timing of the shocks.
	
	\begin{proof}[Proof of Theorem~\ref{thm:Verification}]
		We generalize the standard verification argument used in \cite{GomesConti2013}, \cite[Ch. 7.2]{CarmonaMFG2018}, \cite{BelakMFG}, among others, to take into account the additional state process $U$ representing the common shock events.
		Noting that $N^{ikj}$ and $U$ do almost surely not jump simultaneously and that $v$ is continuous in $t$ we obtain from It\^{o}'s Lemma
		\begin{align*}
			v^{X_T}(T,U_T)
			&= v^{X_0} (0,u_0) \\
			&\quad + \sum_{i \in \mathbb{S}} \int_0^T \mathbb{I}_{\{X_{s-}=i\}} \frac{\partial}{\partial t} v^i(s,U_{s-}) \de s \\
			&\quad + \sum_{i,j \in \mathbb{S}: i \neq j} \sum_{k=0}^n \int_0^T \mathbb{I}_{\{X_{s-}=i\}} \mathbb{I}_{\{Z_{s-}=k\}} (v^j(s,U_s)-v^i(s,U_{s-}))  \de N_s^{ikj} \\
			&\quad + \sum_{i \in \mathbb{S}} \sum_{l=1}^n \int_0^T \mathbb{I}_{\{X_{s-}=i\}}   \mathbb{I}_{\{Z_{s-}=l-1\}} (v^{J^i(s,M_{s-})}(s,U_s)- v^i(s,U_{s-})) \de N_s^l \\
			&= v^{X_0} (0,u_0) \\
			&\quad + \sum_{i \in \mathbb{S}} \int_0^T \mathbb{I}_{\{X_{s-}=i\}}  \frac{\partial}{\partial t} v^i(s,U_{s-}) \de s \\
			&\quad + \sum_{i,j \in \mathbb{S}: i \neq j} \sum_{k =0}^n \left(  \int_0^T \mathbb{I}_{\{X_{s-}=i\}} \mathbb{I}_{\{Z(U_{s-})=k\}} (v^j(s,U_{s-})-v^i(s,U_{s-})) \de \overline{N}_s^{ikj} \right. \\
			&\quad +  \int_0^T \mathbb{I}_{\{X_{s-}=i\}} \mathbb{I}_{\{Z(U_{s-})=k\}}   (v^j(s,U_{s-})-v^i(s,U_{s-}))   Q^{ij}(s,k,M_s, \alpha_s)  \de s \Bigg) \\
			&\quad + \sum_{i \in \mathbb{S}} \sum_{l=1}^n \left(  \int_0^T \mathbb{I}_{\{X_{s-}=i\}} \mathbb{I}_{\{Z(U_{s-})=l-1 \}} \left(v^{J^i(s,M_{s-})}\left(s,\overrightarrow{(U_{s-},s)}\right)- v^i(s,U_{s-})\right) \de \overline{N}_s^l \right. \\
			&\quad +  \left. \int_0^T \mathbb{I}_{\{X_{s-}=i\}} \mathbb{I}_{\{Z(U_{s-})= l-1\}} \left(v^{J^i(s,M_{s-})}\left(s,\overrightarrow{(U_{s-},s)}\right)- v^i(s,U_{s-})\right) \lambda^l(s,M_s) \de s \right) \\
			&= v^{X_0}(0,u_0) \\
			&\quad + \sum_{i,j \in \mathbb{S}: i \neq j} \sum_{k=0}^n \int_0^T \mathbb{I}_{\{X_{s-}=i\}} \mathbb{I}_{\{Z(U_{s-})=k\}} (v^j(s,U_{s-})-v^i(s,U_{s-})) \de \overline{N}_s^{ikj} \\
			&\quad + \sum_{i \in \mathbb{S}} \sum_{l=1}^n \int_0^T  \mathbb{I}_{\{X_{s-}=i\}} \mathbb{I}_{\{Z(U_{s-})=l-1\}} \left(v^{J^i(s,M_{s-})}\left(s,\overrightarrow{(U_{s-},s)}\right)- v^i(s,U_{s-})\right) \de \overline{N}_s^l \\
			&\quad + \sum_{i \in \mathbb{S}} \int_0^T \mathbb{I}_{\{X_{s-}=i\}}  \Bigg( \frac{\partial}{\partial t} v^{i}(s,U_{s-}) \\
			&\quad + 
			\mathbb{I}_{\{Z(U_{s-})<n\}} \left(v^{J^i(s,M_{s-})}\left(s,\overrightarrow{(U_{s-},s)}\right)- v^i(s,U_{s-})\right)  \lambda^{Z(U_{s-})+1}(s,M_s)   \\
			&\quad + \sum_{j \in \mathbb{S}}   Q^{ij}(s,Z(U_{s-}),M_s, \alpha_s) v^j(s,U_{s-})  \Bigg) \de s. 
		\end{align*}
		where the compensated Poisson processes $\overline{N}_s^{ikj}$ and $\overline{N}_s^l$ are $L^2$-martingales. Since $v$ and $Q$ are bounded, the corresponding stochastic integrals are martingales.
		Thus we obtain
		\begin{align*}
			&\mathbb{E}^\alpha \left[ v^{X_0}(0,U_0) \right] \\
			&= \mathbb{E}^\alpha[v^{X_T}(U_T)]  - \mathbb{E}^\alpha \Bigg[ \sum_{i \in \mathbb{S}} \int_0^T \mathbb{I}_{\{X_{s-}=i\}}  \Bigg( \frac{\partial}{\partial t} v^{i}(s,U_{s-}) \\
			&\qquad +
			\mathbb{I}_{\{Z(U_{s-})<n\}} \left(v^{J^i(s,M_{s-})}\left(s,\overrightarrow{(U_{s-},s)}\right)- v^i(s,U_{s-})\right)  \lambda^{Z(U_{s-})+1}(s,M_s)   \\
			&\qquad + \sum_{j \in \mathbb{S}}  Q^{ij}(s,Z(U_{s-}),M_s, \alpha_s) v^j(s,U_{s-})  \Bigg) \de s  \Bigg] \\
			&\ge \mathbb{E}^\alpha \Bigg[ \Psi^{X_T}(Z(U_T),M_T) + \int_0^T \psi^{X_{s-}}(s,Z(U_{s-}), M_{s-},\alpha_s) \de s \Bigg].\\
			&= \mathbb{E}^\alpha \left[\psi^{X_T}(Z_T,M_T) + \int_0^T \psi^{X_s}(s,Z_s,M_s, \alpha_s) \de s \right]
		\end{align*}
		with equality in the special case $\alpha=\hat\alpha$ as in \eqref{eq:strategy}.
		On the other hand, since $\mathbb{P}$ and $\mathbb{P}^\alpha$ coincide on $\mathfrak{F}(0)$ and $U_0=u_0$ a.s., it is clear that
		\[
		\sum_{i \in \mathbb{S}} \mathbb{P}(X_0=i) v^i(0,u_0) = \mathbb{E}\left[ v^{X_0}(0,U_0)\right] = \mathbb{E}^\alpha \left[ v^{X_0}(0,U_0) \right]
		\]
		and the proof is complete.
	\end{proof}
	
	\subsection{Aggregation}\label{sec:aggregation}
	
	In this section we determine the implied aggregate distribution resulting from a given feedback strategy via the associated Kolmogorov forward equation.
	As above, we write $\alpha_t = \alpha(t, U_{t-}, X_{t-})$, $t\in[0,T]$, whenever $\alpha$ is a feedback strategy as in \eqref{eq:Definition_Feedback_Strategy}.
	
	\begin{lemma}
		\label{Lemma:DistGivenMarkov}
		Let $M$ be a $\mathfrak{G}$-adapted process. If $\alpha: \text{TS} \times \mathbb{S}  \rightarrow \mathbb{A}$ is a feedback strategy, then the associated conditional aggregate distribution $\mathbb{P}^\alpha(X_t\in\cdot\,|\mathfrak{G}_T)$ satisfies
		\begin{align*}
			&\mathbb{P}^\alpha(X_t = l|\mathfrak{G}_T)\\
			&= \mathbb{P}(X_0 = l) + \sum_{i \in \mathbb{S}} \int_0^t \mathbb{P}^\alpha(X_{s-} = i|\mathfrak{G}_T)  Q^{il}(s,Z(U_{s-}),M_s, \alpha(s,U_{s-},i)) \de s \\
			&\quad + \sum_{k=1}^n \mathbb{I}_{\{Z(U_t) \ge k\}} \left( \sum_{i \in \mathbb{S}} \mathbb{I}_{\left\{J^i\left(U_t^k,M_{U_t^k-}\right) = l\right\}} \mathbb{P}^\alpha(X_{U_t^k-} = i|\mathfrak{G}_T) - \mathbb{P}^\alpha(X_{U_t^k-} =l| \mathfrak{G}_T) \right)
		\end{align*} for all $l \in \mathbb{S}$.
	\end{lemma}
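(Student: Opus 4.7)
The plan is to apply the It\^{o} change-of-variable formula to the indicator $\mathbb{I}_{\{X_t=l\}}$ using the pathwise SDE \eqref{eq:model_X_dynamics}, split the Poisson integrals into compensated-martingale plus drift parts, and then take $\mathbb{E}^\alpha[\,\cdot\,|\mathfrak{G}_T]$. Since $X$ is piecewise constant and the driving Poisson processes almost surely do not jump simultaneously, this yields
\begin{align*}
\mathbb{I}_{\{X_t=l\}} - \mathbb{I}_{\{X_0=l\}}
&= \sum_{i\ne j}\sum_{k=0}^n \int_0^t \mathbb{I}_{\{X_{s-}=i\}}\mathbb{I}_{\{Z_{s-}=k\}}(\mathbb{I}_{\{j=l\}}-\mathbb{I}_{\{i=l\}})\de N^{ikj}_s\\
&\quad + \sum_{i}\sum_{k=1}^n\int_0^t\mathbb{I}_{\{X_{s-}=i\}}\mathbb{I}_{\{Z_{s-}=k-1\}}\bigl(\mathbb{I}_{\{J^i(s,M_{s-})=l\}}-\mathbb{I}_{\{i=l\}}\bigr)\de N^k_s.
\end{align*}

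Compensating the first sum using the $\mathbb{P}^\alpha$-intensity $Q^{ij}(s,k,M_s,\alpha(s,U_{s-},i))\mathbb{I}_{\{Z_{s-}=k\}}$ of $N^{ikj}$ and exploiting $\sum_j Q^{ij}=0$, the drift reduces to $\int_0^t \sum_i \mathbb{I}_{\{X_{s-}=i\}} Q^{il}(s, Z(U_{s-}), M_s, \alpha(s,U_{s-},i))\de s$; taking $\mathbb{E}^\alpha[\,\cdot\,|\mathfrak{G}_T]$ and applying Fubini then produces the first integral on the right-hand side of the lemma. For the second sum, under $\mathbb{P}^\alpha$ the process $N^k$ restricted to $\{Z_{s-}=k-1\}$ has a single jump exactly at the $\mathfrak{G}_T$-measurable time $U^k_T$ whenever $Z(U_T)\ge k$; hence the integral against $\de N^k_s$ collapses to a single summand at $s=U^k_T$, and $\mathfrak{G}_T$-conditioning replaces $\mathbb{I}_{\{X_{U^k_T-}=i\}}$ by $\mathbb{P}^\alpha(X_{U^k_T-}=i|\mathfrak{G}_T)$. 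Collecting all terms, and using $\mathbb{P}^\alpha(X_0=l)=\mathbb{P}(X_0=l)$ because $\mathbb{P}$ and $\mathbb{P}^\alpha$ agree on $\mathfrak{F}_0$, yields the claimed identity.

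The step I expect to require the most care is showing that the stochastic integrals against the compensated martingales $\overline{N}^{ikj}$ contribute zero to the $\mathfrak{G}_T$-conditional expectation, since $\mathfrak{G}_T \not\subseteq \mathfrak{F}_s$ for $s<T$. I would handle this by initial enlargement of filtrations: set $\mathfrak{H}_s \defined \mathfrak{F}_s \vee \mathfrak{G}_T$. Under the reference measure $\mathbb{P}$ the families $(N^{ikj})$ and $(N^k)$ are independent by construction, so each $N^{ikj}$ is independent of $\mathfrak{G}_T$ and therefore keeps its unit $\mathbb{P}$-intensity in $\mathfrak{H}$. The Girsanov-type density defining $\mathbb{P}^\alpha$ (see Section~\ref{sec:measure_change_bounded}) is built from the $\mathfrak{G}$-adapted intensities $\lambda$ and aggregate process $M$ together with the $\mathfrak{F}$-predictable feedback strategy $\alpha$, all of which are already $\mathfrak{H}$-predictable; by the Girsanov theorem for marked point processes the $\mathbb{P}^\alpha$-intensity of $N^{ikj}$ in $\mathfrak{H}$ then coincides with its $\mathfrak{F}$-intensity. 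Consequently $\overline{N}^{ikj}$ remains a $\mathbb{P}^\alpha$-martingale in $\mathfrak{H}$; its bounded stochastic integrals are $\mathfrak{H}$-martingales started at zero, and since $\mathfrak{G}_T \subseteq \mathfrak{H}_0$ their $\mathfrak{G}_T$-conditional expectations vanish.
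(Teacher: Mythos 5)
Your overall structure matches the paper's: apply It\^o to the indicator (the paper uses $b^{X_t}$ for arbitrary $b\in\mathbb{R}^S$ and sets $b=e_l$ at the end), compensate the $N^{ikj}$-integrals using the $\mathbb{P}^\alpha$-intensities, collapse the $N^k$-integrals to the shock times, and condition on $\mathfrak{G}_T$. The one place where you genuinely diverge from the paper is exactly the step you flag as delicate: showing $\mathbb{E}^\alpha\bigl[\int_0^t\cdots\,\de\overline{N}^{ikj}_s\,\big|\,\mathfrak{G}_T\bigr]=0$. The paper does this by hand, using the explicit product structure of the Radon--Nikodym derivative $\frac{\de\mathbb{P}^\alpha}{\de\mathbb{P}}=\prescript{}{Z}{\Theta^\alpha}\cdot\prescript{}{X}{\Theta^\alpha}$ (Remark~\ref{rem:change_of_measure_two_step}): for $G\in\mathfrak{G}$ one writes $\mathbb{E}^\alpha[\mathbb{I}_G\int\cdots\,\de\overline{N}^{ikj}]$ as an iterated integral over the $(N^k)_k$-path $\omega_Z$ and the $(X_0,N^{ikj})$-path $\omega_x$, pulls out the $\mathfrak{G}$-measurable factors $\prescript{}{Z}{\Theta^\alpha}$ and $\mathbb{I}_G$, and observes that for each fixed $\omega_Z$ the inner $\omega_x$-integral vanishes because $\overline{N}^{ikj}$ is a martingale under the measure with $\mathbb{P}$-density $\prescript{}{X}{\Theta^\alpha}$. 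You instead enlarge the filtration to $\mathfrak{H}_s\defined\mathfrak{F}_s\vee\mathfrak{G}_T$ and argue via Girsanov. This is a valid and conceptually cleaner route, but the Girsanov step should be justified more carefully than by the one-line appeal to predictability: the $\mathfrak{H}$-density process is \emph{not} $\Theta^\alpha_s$ itself. Rather, since $\prescript{}{Z}{\Theta^\alpha}_T$ is $\mathfrak{H}_0$-measurable and each $\tilde{N}^{ikj}_t=N^{ikj}_t-t$ remains an $(\mathfrak{H},\mathbb{P})$-martingale (by independence of $(N^{ikj})$ and $\mathfrak{G}_T$ under $\mathbb{P}$ -- your key observation), the $X$-factor $\prescript{}{X}{\Theta^\alpha}_s$ is a true $(\mathfrak{H},\mathbb{P})$-martingale (boundedness as in Proposition~\ref{Thm:ChangeOfMeasure}), whence
\[
\hat\Theta_s\defined\mathbb{E}\bigl[\Theta^\alpha_T\,\big|\,\mathfrak{H}_s\bigr]=\prescript{}{Z}{\Theta^\alpha}_T\cdot\prescript{}{X}{\Theta^\alpha}_s.
\]
The $\mathfrak{H}_0$-measurable prefactor contributes no jumps in $N^{ikj}$, so the jump-process Girsanov theorem applied to $\hat\Theta$ returns exactly the $\mathfrak{H}$-intensity $Q^{ij}\mathbb{I}_{\{Z=k\}}+\mathbb{I}_{\{Z\neq k\}}$; the bounded stochastic integrals against $\overline{N}^{ikj}$ are then $(\mathfrak{H},\mathbb{P}^\alpha)$-martingales started at zero, and $\mathfrak{G}_T\subseteq\mathfrak{H}_0$ kills their conditional expectation. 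So both proofs ultimately exploit the same fact -- that the $Z$-path can be frozen and the $X$-component of the density still serves as a Girsanov kernel -- with the paper unpacking it via Fubini and you getting it from enlargement-of-filtrations machinery. One small slip: the jump of $\int_0^t\cdots\mathbb{I}_{\{Z_{s-}=k-1\}}\,\de N^k_s$ contributes precisely when the shock time lies in $(0,t]$, i.e.\ when $Z(U_t)\ge k$, not $Z(U_T)\ge k$.
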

	
	\begin{proof}
		Let $b \in \mathbb{R}^S$ be arbitrary. It\^{o}'s lemma yields
		\begin{align*}
			b^{X_t} &= b^{X_0} + \sum_{i,j \in \mathbb{S}:i \neq j } \sum_{k=0}^n \int_0^t \mathbb{I}_{\{X_{s-}=i\}} \mathbb{I}_{\{Z(U_{s-}) = k\}} (b^j-b^i) \de N_s^{ikj} \\
			&\quad + \sum_{i \in \mathbb{S}} \sum_{k=1}^n \int_0^t \mathbb{I}_{\{X_{s-}=i\}} \mathbb{I}_{\{Z(U_{s-}) = k-1\}} \left( b^{J^i(s,M_{s-})}-b^i \right)\de N_s^k \\
			&= b^{X_0} + \sum_{i,j \in \mathbb{S}:i \neq j } \sum_{k=0}^n \int_0^t \mathbb{I}_{\{X_{s-}=i\}} \mathbb{I}_{\{Z(U_{s-})=k\}} (b^j-b^i) \de \overline{N}_s^{ikj} \\
			&\quad + \sum_{i,j \in \mathbb{S}: i \neq j} \sum_{k=0}^n \int_0^t \mathbb{I}_{\{X_{s-}=i\}} \mathbb{I}_{\{Z(U_{s-})=k\}} (b^j-b^i) Q^{ij}(s,k,M_s, \alpha(s,U_{s-},i)) \de s \\
			&\quad + \sum_{i \in \mathbb{S}} \sum_{k=1}^n \mathbb{I}_{\{Z(U_t) \ge k\}} \mathbb{I}_{\left\{X_{U_t^k-} = i\right\}} \left(b^{J^i\left(U_t^k, M_{U_t^k-}\right)} -b^i\right).
		\end{align*}
		Taking conditional expectations and using Fubini's theorem we obtain 
		\begin{align*}
			&\mathbb{E}^\alpha\left[b^{X_t}|\mathfrak{G}_T\right]  - \mathbb{E}^\alpha \left[ b^{X_0} | \mathfrak{G}_T \right] \\
			&= \sum_{i,j \in \mathbb{S}:i \neq j } \sum_{k=0}^n \mathbb{E}^\alpha \left[ \int_0^t \mathbb{I}_{\{X_{s-}=i\}} \mathbb{I}_{\{Z(U_{s-})=k\}} (b^j-b^i) \de \overline{N}_s^{ikj} | \mathfrak{G}_T\right] \\
			&\quad + \sum_{i,j \in \mathbb{S}:i \neq j } \int_0^t \mathbb{E}^\alpha \left[ \mathbb{I}_{\{X_{s-}=i\}} (b^j-b^i) Q^{ij}(s,Z(U_{s-}),M_s, \alpha(s,U_{s-},i))|\mathfrak{G}_T \right] \de s \\
			&\quad + \sum_{i \in \mathbb{S}} \sum_{k=1}^n \mathbb{E}^\alpha \left[ \mathbb{I}_{\{Z(U_t) \ge k\}} \mathbb{I}_{\left\{X_{U_t^k-} = i\right\}} \left( b^{J^i\left(U_t^k, M_{U_t^k-}\right)} - b^i \right) |\mathfrak{G}_T\right].
		\end{align*}
		We next show that
		\[
		\mathbb{E}^\alpha \left[ \int_0^t \mathbb{I}_{\{X_{s-}=i\}} \mathbb{I}_{\{Z(U_{s-})=k\}} (b^j-b^i) \de \overline{N}_s^{ikj} | \mathfrak{G}_T \right] = 0
		\] for all $i, j \in \mathbb{S}$ with $i \neq j$ and all $k \in \{0,1, \ldots, n\}.$
		We denote paths of $(N^k)_{k \in \{1,\ldots, n\}}$ by $\omega_z$ and paths of $(X_0,(N^{ikj})_{i,j \in \mathbb{S}, i \neq j; k \in \{0,1, \ldots, n\}})$ by $\omega_x$.
		Recall that by Remark~\ref{rem:change_of_measure_two_step}, the change of measure is a product $\frac{\de \mathbb{P}^\alpha}{\de \mathbb{P}} = \prescript{}{Z}{\Theta^\alpha} \cdot \prescript{}{X}{\Theta^\alpha}$ consisting of the $\sigma(N^k, k \in \{1, \ldots, n\})$-measurable term $\prescript{}{Z}{\Theta^\alpha}$ and the term $\prescript{}{X}{\Theta^\alpha}$, which for each path of $Z$ describes a change of measure for the processes $\overline{N}^{ikj}$. 
		
		Let $G \in \mathfrak{G}$ be arbitrary. Then we obtain using Fubini's theorem
		\begin{align*}
			&\mathbb{E}^\alpha \left[ \mathbb{I}_G \int_0^t \mathbb{I}_{\{X_{s-} = i\}} \mathbb{I}_{\{Z(U_{s-})=k\}} (b^j-b^i) \de \overline{N}_s^{ikj} \right] \\
			&= \mathbb{E} \left[ \prescript{}{Z}{\Theta}^\alpha \prescript{}{X}{\Theta}^\alpha \mathbb{I}_G \int_0^t \mathbb{I}_{\{X_{s-} = i\}} \mathbb{I}_{\{Z(U_{s-})=k\}} (b^j-b^i) \de \overline{N}_s^{ikj} \right] \\
			&= \int \int \prescript{}{Z}{\Theta}^\alpha (\omega_z) \prescript{}{X}{\Theta}^\alpha (\omega_z, \omega_x)\mathbb{I}_G (\omega_z) \int_0^t \mathbb{I}_{\{X_{s-}(\omega_z,\omega_x) = i\}} \mathbb{I}_{\{Z(U_{s-})(\omega_z)=k\}} (b^j-b^i) \de \overline{N}_s^{ikj} (\omega_z, \omega_x) \\
			&\qquad \mathbb{P}^{(X_0, (N^{ikj})_{i,j \in \mathbb{S}, i \neq j; k \in \{0,1, \ldots, n\}})} (\de \omega_x) \mathbb{P}^{(N^k)_{k \in \{1, \ldots, n\}}}(\de \omega_z) \\
			&= \int \prescript{}{Z}{\Theta}^\alpha (\omega_z) \mathbb{I}_G (\omega_z) \int \prescript{}{X}{\Theta}^\alpha (\omega_z, \omega_x) \int_0^t \mathbb{I}_{\{X_{s-}(\omega_z,\omega_x) = i\}} \mathbb{I}_{\{Z(U_{s-})(\omega_z)=k\}} (b^j-b^i) \de \overline{N}_s^{ikj} (\omega_z, \omega_x) \\
			&\qquad \mathbb{P}^{(X_0,  (N^{ikj})_{i,j \in \mathbb{S}, i \neq j; k \in \{0,1, \ldots, n\}})} (\de \omega_x) \mathbb{P}^{(N^k)_{k \in \{1, \ldots, n\}}}(\de \omega_Z)
		\end{align*}
		Now, relying on the same argument as in Proposition~\ref{Thm:ChangeOfMeasure},  for any fixed path $\omega_Z$ the process $\overline{N}_s^{ikj}$ is a martingale under the probability measure with $\mathbb{P}$-density $\prescript{}{X}{\Theta^\alpha}$. Hence, the inner integral is a martingale and we conclude that the outer integral is zero, whence
		\[
		\mathbb{E}^\alpha \left[ \int_0^t \mathbb{I}_{\{X_{s-}=i\}} \mathbb{I}_{\{Z(U_{s-})=k\}} (b^j-b^i) \de \overline{N}_s^{ikj} | \mathfrak{G}_T \right] = 0.
		\]
		Using this as well as the fact that we pull out know factors from the conditional expectation we obtain
		\begin{align*}
			&\mathbb{E}^\alpha\left[b^{X_t}|\mathfrak{G}_T\right]  - \mathbb{E}\left[b^{X_0} \right] \\
			&=  \sum_{i,j \in \mathbb{S}:i \neq j } \int_0^t \mathbb{E}^\alpha \left[ \mathbb{I}_{\{X_{s-}=i\}} |\mathfrak{G}_T \right] (b^j-b^i) Q^{ij}(s,Z(U_{s-}),M_s, \alpha(s,U_{s-},i)) \de s \\
			&\quad + \sum_{i \in \mathbb{S}} \sum_{k=1}^n \mathbb{E}^\alpha \left[ \mathbb{I}_{\left\{X_{U_t^k-} = i\right\}} |\mathfrak{G}_T\right] \mathbb{I}_{\{Z(U_t) \ge k\}} \left( b^{J^i\left(U_t^k, M_{U_t^k-}\right)} -b^i \right).
		\end{align*}
		Choosing $b=e_l$ it follows that
		\begin{align*}
			&\mathbb{P}^\alpha(X_t = l|\mathfrak{G}_T)\\
			&= \mathbb{P}(X_0=l) \\
			&\quad + \sum_{i,j\in \mathbb{S}: i \neq j}  \int_0^t \mathbb{P}^\alpha(X_{s-} = i |\mathfrak{G}_T)  \left( \mathbb{I}_{\{l =j\}} - \mathbb{I}_{\{l=i\}} \right) Q^{ij}(s,Z(U_{s-}),M_s, \alpha(s,U_{s-},i)) \de s \\
			&\quad  + \sum_{i \in \mathbb{S}} \sum_{k=1}^n \mathbb{P}^\alpha \left(X_{U_t^k-} = i|\mathfrak{G}_T\right) \mathbb{I}_{\{Z(U_t) \ge k\}} \left( \mathbb{I}_{\left\{J^i\left(U_t^k,M_{U_t^k-}\right)=l\right\}} - \mathbb{I}_{\{i=l\}} \right) \\
			&= \mathbb{P}(X_0=l) + \sum_{i \neq l} \mathbb{P}^\alpha(X_{s-} = i|\mathfrak{G}_T)  Q^{il}(s,Z(U_{s-}),M_s, \alpha(s,U_{s-},i)) \de s \\
			&\quad - \sum_{j \neq l} \mathbb{P}^\alpha(X_{s-} = l|\mathfrak{G}_T) Q^{lj}(s,Z(U_{s-}),M_s, \alpha(s,U_{s-},l)) \de s \\
			&\quad + \sum_{k=1}^n \mathbb{I}_{\{Z(U_t) \ge k\}} \left( \sum_{i \in \mathbb{S}} \mathbb{I}_{\left\{J^i\left(U_t^k,M_{U_t^k-}\right) =l\right\}} \mathbb{P}^\alpha(X_{U_t^k-} = i|\mathfrak{G}_T) - \mathbb{P}^\alpha(X_{U_t^k-} = l|\mathfrak{G}_T) \right) \\
			&= \mathbb{P}(X_0=l) + \sum_{i \in \mathbb{S}} \int_0^t \mathbb{P}^\alpha(X_{s-} = i|\mathfrak{G}_T) Q^{il}(s,Z(U_{s-}),M_s, \alpha(s,U_{s-},i)) \de s \\
			&\quad + \sum_{k=1}^n \mathbb{I}_{\{Z(U_t) \ge k\}} \left( \sum_{i \in \mathbb{S}} \mathbb{I}_{\left\{J^i\left(U_t^k,M_{U_t^k-}\right) =l\right\}} \mathbb{P}^\alpha(X_{U_t^k-} = i|\mathfrak{G}_T) - \mathbb{P}^\alpha(X_{U_t^k-} = l|\mathfrak{G}_T) \right),
		\end{align*} which is the desired claim.
	\end{proof} 
	
	\begin{theorem}[conditional aggregate distribution]
		\label{thm:DistributionGivenMarkovStrat}
		Let $\mu:\text{TS} \rightarrow \mathbb{P}(\mathbb{S})$ be a regular function and $M_t=\mu(t,U_t)$, $t \in [0,T]$. Moreover, let $\alpha:\text{TS} \times \mathbb{S} \rightarrow \mathbb{A}$ be a feedback strategy. Then
		\[
		\mathbb{P}^\alpha(X_t=j|\mathfrak{G}_T) = \mathbb{P}^\alpha(X_t=j|U_t)\quad\text{for }t\in[0,T],\ j\in\mathbb{S}.
		\] 
		The dynamics of the conditional aggregate distribution are given by
		\begin{align*}
			\de \mathbb{P}^\alpha(X_t=j|U_{t}) &= \sum_{i \in \mathbb{S}} \mathbb{P}^\alpha(X_{t}=i|U_{t}) Q^{ij}(t,Z(U_{t}),M_t,\alpha(t,U_{t},i)) \de t\quad \text{on }\Delta U_t=0
		\end{align*} 
		and 
		\begin{align*}
			\mathbb{P}^\alpha(X_t=j|U_t) = \sum_{i \in \mathbb{S}} \mathbb{I}_{\left\{ J^i \left( t, M_{t-} \right) = j \right\}}\mathbb{P}^\alpha(X_t=i|U_{t-})\quad\text{on }\Delta U_t\neq 0
		\end{align*}
		with the initial condition
		\[
		\mathbb{P}^\alpha(X_0 = j|U_0) = \mathbb{P}(X_0=j).
		\]
	\end{theorem}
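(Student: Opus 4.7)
The plan is to start from the integral identity in Lemma~\ref{Lemma:DistGivenMarkov} for $\pi_t^l \defined \mathbb{P}^\alpha(X_t = l \mid \mathfrak{G}_T)$ and to establish that $\pi_t^l$ is in fact $\mathfrak{G}_t$-measurable for every $t \in [0,T]$. Once this is in hand, the tower property together with the identity $\mathfrak{G}_t = \sigma(U_t)$ immediately yields
\[
\pi_t^l = \mathbb{E}^\alpha[\pi_t^l \mid \mathfrak{G}_t] = \mathbb{P}^\alpha(X_t = l \mid \mathfrak{G}_t) = \mathbb{P}^\alpha(X_t = l \mid U_t),
\]
which is the first assertion of the theorem. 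The ODE on $\{\Delta U_t = 0\}$ and the jump relation on $\{\Delta U_t \neq 0\}$ are then read off by inspecting the right-hand side of Lemma~\ref{Lemma:DistGivenMarkov} on flat stretches of $U$ and across its jumps, respectively, and the initial condition follows directly by evaluating the integral equation at $t=0$ where $U_0 = u_0$.

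To establish $\mathfrak{G}_t$-measurability I would argue by induction over the common shock epoch. Denote by $\tau_1, \tau_2, \ldots$ the successive jump times of $Z$ (setting $\tau_k \defined \infty$ if fewer than $k$ shocks occur). On the initial stochastic interval $[0, \tau_1 \wedge T)$ we have $Z(U_{s-}) = 0$ and $U_{s-} = u_0$, so the indicators $\mathbb{I}_{\{Z(U_t) \ge k\}}$ all vanish and the equation reduces to the linear Carathéodory ODE
\[
\pi_t^l = \mathbb{P}(X_0 = l) + \sum_{i \in \mathbb{S}} \int_0^t \pi_s^i\, Q^{il}(s, 0, \mu(s,u_0), \alpha(s,u_0,i))\, \de s,
\]
whose bounded, deterministic coefficients produce a unique deterministic solution. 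In particular $\pi_t^l$ is $\mathfrak{G}_t$-measurable and solves the claimed ODE with $Z(U_t)=0$, $M_t=\mu(t,u_0)$. At $t = \tau_1$ the indicator for $k=1$ switches on, and comparing the right-hand side of Lemma~\ref{Lemma:DistGivenMarkov} at $\tau_1-$ and $\tau_1$ gives exactly the jump $\pi_{\tau_1}^l = \sum_{i \in \mathbb{S}} \mathbb{I}_{\{J^i(\tau_1, M_{\tau_1-}) = l\}}\, \pi_{\tau_1-}^i$, which depends measurably on $\tau_1$ alone and is therefore $\mathfrak{G}_{\tau_1}$-measurable. On the subsequent interval $[\tau_1, \tau_2 \wedge T)$ the integral equation is again a linear ODE, now with coefficients depending measurably on the frozen parameter $(t, U_{\tau_1})$ and with the $\mathfrak{G}_{\tau_1}$-measurable initial datum just produced, so its unique solution depends measurably on $(t, U_{\tau_1})$ and is $\mathfrak{G}_t$-measurable. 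Iterating this argument up through $\tau_n$ delivers $\mathfrak{G}_t$-measurability of $\pi_t^l$ throughout $[0,T]$, and the second and third claims then come for free from the decomposition of the integral equation into a continuous (ODE) part and a pure-jump part.

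The main technical obstacle I expect is the bookkeeping in the induction rather than any deep issue: on each stochastic interval $[\tau_k, \tau_{k+1} \wedge T)$ one must verify that the linear ODE has bounded, Borel measurable coefficients depending measurably on the frozen parameter $U_{\tau_k} \in \mathbb{U}$, that it admits a unique absolutely continuous solution that is jointly measurable in $(t, U_{\tau_k})$, and that the relocation jump preserves this measurable parameterization as $k$ increments. All of this is routine given the boundedness and Borel measurability of $Q$, $\lambda$ and $J$, the regularity of $\mu$ and the measurable feedback form of $\alpha$, but keeping the parameterization by $U_{\tau_k}$ consistent across the $n$ induction steps requires some care.
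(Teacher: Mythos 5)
Your proof is correct, but it follows a genuinely different route from the paper's. The paper establishes the key identity $\mathbb{P}^\alpha(X_t=j\mid\mathfrak{G}_T) = \mathbb{P}^\alpha(X_t=j\mid U_t)$ abstractly: writing $\mathfrak{G}_T=\sigma(\mathfrak{G}_t\cup\mathcal{Z}_t)$ with $\mathcal{Z}_t=\sigma(Z_s:s>t)$, it invokes the conditional independence of $X_t$ and $\mathcal{Z}_t$ given $\mathfrak{G}_t$ (citing Kallenberg) to drop the future of $Z$ from the conditioning, and then reads off the continuous and jump dynamics directly from Lemma~\ref{Lemma:DistGivenMarkov}. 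You instead bypass the conditional-independence lemma entirely and prove $\mathfrak{G}_t$-measurability constructively: on each stochastic interval $[\tau_k,\tau_{k+1}\wedge T)$ the integral identity of Lemma~\ref{Lemma:DistGivenMarkov} reduces, for each frozen $\omega_z$, to a linear Carathéodory ODE with bounded Borel coefficients that depend only on $(t,U_{\tau_k})$, whose unique solution (together with the relocation jump at $\tau_{k+1}$, a measurable function of $\tau_{k+1}$ and the left limit) is therefore a measurable function of $(t,U_t)$. Since this pathwise solution is a version of $\mathbb{P}^\alpha(X_t=\cdot\mid\mathfrak{G}_T)$ and is $\sigma(U_t)$-measurable, the tower property gives the first claim, and the remaining claims are already built into the construction. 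The trade-off is that the paper's argument is shorter but relies on verifying the conditional independence $X_t\perp\mathcal{Z}_t\mid\mathfrak{G}_t$ under $\mathbb{P}^\alpha$ (which in turn rests on the factorization of the measure change into a $Z$-part and an $X$-part, cf.\ Remark~\ref{rem:change_of_measure_two_step}), whereas your approach is self-contained and elementary but needs the pathwise uniqueness/measurable-parameter bookkeeping across the $n$ shock epochs that you correctly flag as the main labor; both deliver the theorem.
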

	
	\begin{proof}
		Since $\mathfrak{G}_T=\sigma(\mathfrak{G}_t\cup\mathcal{Z}_t)$ where $\mathcal{Z}_t=\sigma(Z_s:s>t)$ and $X_t$ is independent of $\mathcal{Z}_t$ given $Z_t$ we obtain by \cite[Prop.  6.6]{Kallenberg2002Foundations} that
		\[
		\mathbb{P}^\alpha(X_t=j|\mathfrak{G}_T) = \mathbb{P}^\alpha(X_t=j|\mathfrak{G}_t) = \mathbb{P}^\alpha(X_t=j|U_t),
		\] 
		which immediately yields for fixed $t$ the first representation for the case $\Delta U_t =0$.
		The representation for the case $\Delta U_t \neq 0$ immediately follows by comparing the formulae from Lemma \ref{Lemma:DistGivenMarkov} for $t=U_t^k$ and $s=U_t^k-$.
	\end{proof}
	
	\subsection{Forward-Backward System}\label{sec:fb_system}
	
	For a fixed regular function $\mu: \text{TS} \rightarrow \mathcal{P}(\mathbb{S})$, we denote by $\hat{\mathbb{P}}$ the distribution of $X_t$ given that the agent chooses the optimal strategy $\hat{\alpha}$ given in Theorem \ref{thm:Verification}.
	For $\hat{\alpha}$ to be an equilibrium strategy, it is necessary that $\hat{\mathbb{P}}(X_t =\cdot \;|U_{t}) = \mu (t,U_{t})$, i.e.\ the agent's prediction of the other agents' distribution is correct ex post.
	Combining Theorem~\ref{thm:Verification} with Theorem~\ref{thm:DistributionGivenMarkovStrat} it follows that we obtain a mean field equilibrium from the solution of the forward-backward equilibrium system stated below:
	
	\begin{definition}
		A pair $(\mu,v)$ of regular functions $	\mu :\text{TS} \rightarrow \mathcal{P}(\mathbb{S})$ and $v:\text{TS} \rightarrow \mathbb{R}^S$ is a solution of the \textit{equilibrium system}  if 
		\begin{align}
			\label{eq:FB1} \tag{E1}
			\begin{split}
				\frac{\partial}{\partial t} v^{i}(t,u)
				& =  - \hat{\psi}^i(t,u,\mu(t,u),v(t,u)) - \sum_{j \in \mathbb{S}} \hat{Q}^{ij}(t,u,\mu(t,u), v(t,u)) v^j(t,u)  \\
				& \quad - \mathbb{I}_{\{Z(u)<n\}} \lambda^{Z(u)+1} (t,\mu(t,u)) \left(v^{J^i(t, \mu(t-,u))}\left(t,\overrightarrow{(u,t)}\right) - v^i(t,u) \right)  
			\end{split} \\ \label{eq:FB2} \tag{E2}
			\frac{\partial}{\partial t} \mu^{i}(t,u) &= \sum_{j \in \mathbb{S}} \mu^j(t,u) \hat{Q}^{ji}(t,u,\mu(t,u),v(t,u))
		\end{align}
		for all $i \in \mathbb{S}$, $(t,u) \in \text{TS}$ and $m \in \mathcal{P}(\mathbb{S})$ subject to the initial and terminal conditions
		\begin{align}
			\label{eq:FB3} \tag{E3}
			v(T,u) &= \Psi(Z(u),\mu(T,u)) \\ \label{eq:FB4} \tag{E4}
			\mu(0,u_0) &= m_0
		\end{align}
		as well as the consistency conditions
		\begin{align}
			\label{eq:FB5} \tag{E5}
			\mu(t_u,u) = \mu\left(t_u,\overleftarrow{u}\right) \left( \mathbb{I}_{\{J^i(t_u ,\mu(t_u-,u)) = j\}} \right)_{i,j \in \mathbb{S}}\quad\text{for }t_u=u^{Z(u)},\ Z(u)\ge 1.
		\end{align}
	\end{definition}
	
	\begin{remark}
		\label{remark:ODEmu}
		We recall from Remark~\ref{remark:ODEv} that, given a regular function $\mu:\text{TS}\rightarrow\mathcal{P}(\mathbb{S})$, the backward equations \eqref{eq:FB1} and \eqref{eq:FB3} can be solved iteratively for $v$, starting from $u\in\mathbb{U}$ with $Z(u)=n$.
		Analogously, given a regular function $v:\text{TS}\rightarrow \mathbb{R}^{\mathbb{S}}$ under natural continuity conditions the forward equations \eqref{eq:FB2}, \eqref{eq:FB4} and \eqref{eq:FB5} can be solved recursively for $\mu$ starting from $u\in\mathbb{U}$ with $Z(u)=0$ (see Lemma~\ref{lemma:appendix_existence_mu}). \remEnde
	\end{remark}
	
	Under standard Lipschitz continuity assumptions we obtain existence and uniqueness of solutions to the system \eqref{eq:FB1}-\eqref{eq:FB5} on a short time horizon. 
	
	\begin{theorem}[existence and uniqueness for the equilibrium system]
		\label{thm:ex_unique_main_text}
		Assume that the functions $\hat{\psi}$ and $\hat{Q}$ are Lipschitz in $m$ and $v$, the functions $\Psi$ and $\lambda$ are Lipschitz in $m$ and that the relocation function $J^i$ is constant in $m$. Then for $T>0$ sufficiently small, there is a unique solution of \eqref{eq:FB1}-\eqref{eq:FB5}.
	\end{theorem}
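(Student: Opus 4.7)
The plan is a Banach fixed-point argument on pairs of regular functions. Let $\mathcal{B}$ denote the complete metric space of bounded Borel functions $(\mu,v):\text{TS}\to\mathcal{P}(\mathbb{S})\times\mathbb{R}^S$ with the sup norm, intersected with a closed ball whose $v$-radius is determined by the a priori bound obtained from $\|\Psi\|_\infty$ and the boundedness of $\hat\psi$ via Gronwall. Define $\Phi:\mathcal{B}\to\mathcal{B}$ by $\Phi(\mu,v)=(\mu',v')$, where $v'$ is obtained from the backward equations \eqref{eq:FB1}, \eqref{eq:FB3} using $\mu$ on the right-hand side, and $\mu'$ is obtained from the forward equations \eqref{eq:FB2}, \eqref{eq:FB4}, \eqref{eq:FB5} using $v$. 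A fixed point of $\Phi$ is exactly a solution of the equilibrium system, so it suffices to show that $\Phi$ is a well-defined self-map that contracts for small $T$.

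Well-posedness of $\Phi$ follows from the recursions highlighted in Remarks~\ref{remark:ODEv} and \ref{remark:ODEmu}. The function $v'$ is built top-down in $Z(u)$: because the indicator $\mathbb{I}_{\{Z(u)<n\}}$ drops out at the top layer, $v'(\cdot,u)$ for $Z(u)=n$ solves a plain Carathéodory ODE, and each lower layer only depends on the already-constructed $v'(\cdot,\overrightarrow{(u,t)})$ one layer above. Conversely, $\mu'$ is built bottom-up in $Z(u)$, starting from the initial condition $m_0$ at $Z(u)=0$ and using \eqref{eq:FB5} to pass from layer $k-1$ to $k$ at the jump time $u^k$. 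The assumption that $J^i$ is constant in $m$ makes \eqref{eq:FB5} a deterministic linear transfer between the $\mu'$-layers, so it introduces no $m$-dependence into the jump and the fact that $\hat Q$ is an intensity matrix keeps $\mu'$ in $\mathcal{P}(\mathbb{S})$.

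The core step is a Lipschitz estimate. For $(\mu_1,v_1),(\mu_2,v_2)\in\mathcal{B}$ with images $(\mu'_i,v'_i)=\Phi(\mu_i,v_i)$, standard Gronwall arguments applied layer by layer to the $u$-parameterized ODEs \eqref{eq:FB1} and \eqref{eq:FB2}, combined with the Lipschitz hypotheses on $\hat\psi$, $\hat Q$, $\Psi$, and $\lambda$, yield bounds of the form
\begin{equation*}
\|v'_1-v'_2\|_\infty \le C_n(T)\,\|\mu_1-\mu_2\|_\infty, \qquad \|\mu'_1-\mu'_2\|_\infty \le C_n(T)\,\|v_1-v_2\|_\infty,
\end{equation*}
where $C_n(T)$ depends on the data bounds, the Lipschitz constants, and on $n$ through the depth of the recursion, and crucially satisfies $C_n(T)\to 0$ as $T\to 0$. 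Chaining these estimates makes $\Phi$ a contraction on $\mathcal{B}$ for $T$ sufficiently small, and Banach's fixed-point theorem produces a unique $(\mu,v)$.

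The main obstacle I anticipate is the quantitative propagation of the Lipschitz constant across the $n+1$ recursive layers: since $v'$ is built top-down while $\mu'$ is built bottom-up, a naive induction compounds a factor of $C(T)$ at each layer and one has to organize the estimates so that the overall constant $C_n(T)$ still tends to $0$ with $T$. Because $n$ is finite and fixed, this is a matter of careful bookkeeping rather than a structural difficulty; the hypothesis that $J$ is constant in $m$ is essential, as otherwise the jumps in $\mu'$ via \eqref{eq:FB5} would themselves depend on $\mu$ and the straightforward per-layer Gronwall control would break down.
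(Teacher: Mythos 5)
Your proposal takes essentially the same route as the paper: decouple the equilibrium system into a backward map (solve \eqref{eq:FB1}, \eqref{eq:FB3} given $\mu$) and a forward map (solve \eqref{eq:FB2}, \eqref{eq:FB4}, \eqref{eq:FB5} given $v$), build each by recursion over the layers $Z(u)=n,\dots,0$ respectively $Z(u)=0,\dots,n$, prove per-layer Gronwall Lipschitz bounds, and run Banach's fixed-point theorem on small $T$. The only cosmetic difference is that you phrase the contraction for a cross-operator $\Phi(\mu,v)=(\mu',v')$ on pairs, whereas the paper composes the maps into $F=\overrightarrow{F}\circ\overleftarrow{F}$ acting on $\mu$ alone; the two formulations are interchangeable, and your worry about compounding constants across $n+1$ layers is exactly handled in the paper's Lemmas~\ref{Lemma:LipschitzLeftarrow} and~\ref{Lemma:LipschitzRightarrow} by backward and forward induction, producing a geometric-sum constant.

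There is one step you treat as routine that the paper explicitly flags as the nontrivial one, and which ``standard Gronwall arguments'' do not supply on their own: in the backward recursion, the right-hand side of \eqref{eq:FB1} at a layer $Z(u)=k<n$ contains $v^j\bigl(s,\overrightarrow{(u,s)}\bigr)$, and it is not a priori clear that $s\mapsto v\bigl(s,\overrightarrow{(u,s)}\bigr)$ is even measurable, which you need before you can speak of a Carath\'eodory solution. The paper settles this in Lemma~\ref{lemma:appendix_existence_v} by viewing $u^{k+1}$ as a parameter of the ODE for the layer above and invoking continuous dependence of solutions on parameters. Two smaller points to make precise: the claim that $\hat Q$ being an intensity matrix keeps $\mu'$ in $\mathcal{P}(\mathbb{S})$ needs a flow-invariance argument (the paper uses the Bouligand tangent cone in Lemma~\ref{lemma:appendix_existence_mu}); and your ambient space should consist of functions with uniform Lipschitz regularity in $t$ (as in the paper's $\mathcal{M}$) rather than merely bounded Borel, both so the operators are well-defined and so the subset is closed, hence complete, under the sup norm. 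With those ingredients supplied your plan reproduces Theorem~\ref{Theorem:ExistenceUniqueness}.
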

	
	As usual, the proof (which is deferred to Appendix~\ref{appendix:existence}) is based on Banach's fixed point theorem. Hence, as a by-product, we obtain that equilibria on a short-time horizon can be computed through a fixed point iteration.
	
	\begin{remark} 
		It is possible to formulate sufficient conditions for the Lipschitz continuity of $\hat{Q}$ and $\hat{\psi}$ in terms of model primitives, see  \cite[Proposition 1]{GomesConti2013} and \cite[Lemma 7]{CecchinProbabilistic2018}. \remEnde
	\end{remark}

	\section{Approximate Equilibria for Infinitely Many Common Shocks}
	\label{sec:approximation}
	
	The approach discussed above does not directly generalize to mean field games with infinitely many shocks.
	However, the model with up to $n$ shocks represents an approximation of the model with an unbounded number of shocks. 
	More precisely, in this section we demonstrate that equilibria of the $n$-shock mean field game are $\epsilon_n$-equilibria of the game with an unbounded number of shocks, where $\epsilon_n = L c^n$ for some $c\in(0,1)$ and $L>0$.
	As a first step we formally extend our model setup to allow for an unbounded number of shocks.\\
	
	Let $(\Omega, \mathfrak{A}, \mathbb{P})$ now be a probability space that supports infinitely many standard Poisson processes $N^{ikj}$ for $i,j \in \mathbb{S}$ with $i \neq j$, $k \in \mathbb{N}_0$, standard Poisson processes $N^k$ for all $k \in \mathbb{N}$ and an $\mathbb{S}$-valued random variable such that $X_0$, $(N^{ikj})_{i,j \in \mathbb{S}, i \neq j, k \in \mathbb{N}_0}$ and $(N^k)_{k \in \mathbb{N}}$ are independent. Define the full filtration via
	\[
	\mathfrak{F}_t \defined \sigma \left( X_0, N_s^{ikj}, N^l_s : s \in [0,t]; i, j \in \mathbb{S}, i \neq j; k \in \mathbb{N}_0; l \in \mathbb{N} \right).
	\]
	The probability space in this section contains all elements introduced in the last section. 
	Hence, we can immediately define the shock process $Z^n$ via $Z^n_0 =0$ and \eqref{eq:model_Z_dynamics} and the state process $X^n$ via \eqref{eq:model_X_dynamics}, i.e.\ as in the setting with at most $n$ shocks.
	Similarly, we denote by $\mathfrak{G}^n$ the filtration generated by first $n$ common shock events (i.e.\ the process $Z^n$) and by $\mathcal{A}^n$ the set of admissible strategies for the game with $n$ shocks.
	Finally, we denote by $\Theta^{\alpha,n}$ the change of measure for the $n$ shock game given a strategy $\alpha \in \mathcal{A}^n$ and by $\mathbb{P}^{\alpha, n}$ and $\mathbb{E}^{\alpha, n}$ the corresponding probability measure and expectation.
	
	For the game with an unbounded number of shocks we define the process $Z$ analogously to \eqref{eq:model_Z_dynamics} via $Z_0=0$ and
	\begin{align} \label{eq:InfShocksZ}
		\de Z_t &= \sum_{k=1}^\infty \mathbb{I}_{\{Z_{t-}=k-1\}} \de N_t^k.
	\end{align} 
	As before we define the filtration generated by common shocks events $\mathfrak{G} = (\mathfrak{G}_t)_{t \in [0,T]}$ via \eqref{eq:def_filtration_g}. Given a $\mathfrak{G}$-adapted process $M$, the state process $X$ is given by
	\begin{align*}
		\de X_t &= \sum_{i,j \in \mathbb{S}: i \neq j} \sum_{k=0}^\infty \mathbb{I}_{\{X_{t-}=i\}} \mathbb{I}_{\{Z_{t-}=k\}} (j-i) \de N_t^{ikj} \\ 
		&\quad + \sum_{i\in \mathbb{S}} \sum_{k=1}^\infty \mathbb{I}_{\{X_{t-}=i\}} \mathbb{I}_{\{Z_{t-}=k-1\}} \left( J^i(t,M_{t-})-i \right) \de N^k_t.
	\end{align*}
	We highlight that the only difference between $X$ and $X^n$ is that in the definition the sum run to infinity rather than to $n$. 
	In particular, it is clear that $X=X^n$ and $Z=Z^n$ on $\{Z_T \le n\}$.
	
	Strategies are defined analogously as in the $n$-shock case, with $\mathcal{A}$ denoting the set of all admissible strategies for the mean field game with an unbounded number of shocks. 
	If $\alpha$ is a strategy such that
	\[
	\alpha(t,(Z_{(\cdot \wedge t)-}, X_{(\cdot \wedge t)-}) = \nu(t, (Z_{(\cdot \wedge t)-}, X_{t-})
	\]
	for some function $\nu: [0,T] \times \mathbb{N}^{[0,T]} \times \mathbb{S} \rightarrow \mathbb{A}$, we refer to $\alpha$ (and $\nu$) as a feedback strategy.
	Finally, as in the $n$-shock case the model is specified by the functions $Q$, $\lambda$, $\psi$ and $\Psi$, which have exactly the same intuitive interpretations; we assume that
	\begin{align*}
		\lambda &: \mathbb{N} \times [0,T] \times \mathcal{P}(\mathbb{S}) \rightarrow (0,\infty) \\
		Q &: [0,T] \times \mathbb{N}_0 \times \mathcal{P}(\mathbb{S}) \times \mathbb{A} \rightarrow \mathbb{R}^{S \times S} \\
		\psi &: [0,T] \times \mathbb{N}_0\times \mathcal{P}(\mathbb{S}) \times \mathbb{A} \rightarrow \mathbb{R}^S \\
		\Psi &: \mathbb{N}_0 \times \mathcal{P}(\mathbb{S}) \rightarrow \mathbb{R}^S\\
		J &:[0,T] \times \mathcal{P}(\mathbb{S}) \rightarrow \mathbb{S}^\mathbb{S}
	\end{align*} are bounded and Borel measurable and that for all $t \in [0,T]$, $k \in \mathbb{N}_0$, $m \in \mathcal{P}(\mathbb{S})$ and $a \in \mathbb{A}$ the matrix $Q(t,k,m,a)$ is an intensity matrix.
	
	As before given the common shocks process $Z$ and a fixed exogenous aggregate distribution process $M$, the individual agent aims to maximize
	\[
	\mathbb{E}^{\alpha, \infty} \left[ \int_0^T \psi^{X_t} (t, Z_t, M_t, \alpha_t) \de t + \Psi^{X_T}(Z_T,M_T) \right]\quad\text{over all }\alpha\in\mathcal{A},
	\] where $\mathbb{E}^{\alpha, \infty}$ denotes expectation with respect to the equivalent probability measure $\mathbb{P}^{\alpha, \infty}$.
	This measure is again defined in such a way that the the common shocks process $Z$ jumps from $k-1$ to $k$ with intensity $\lambda^k$, and the agent's state process $X$ jumps from $i$ to $j$ with intensity $Q^{ikj}$ when $k$ common shocks have occurred.
	The construction of this measure change is mathematically slightly more delicate, since here we need to change the intensities of an infinite number of Poisson process simultaneously.
	For details on the construction and the explicit definition of $\mathbb{P}^{\alpha, \infty}$ we refer to Appendix~\ref{sec:measure_change_unbounded}. 
	
	The definition of mean field equilibria is exactly the same as Definition~\ref{def:MFE}.
	In the present context, we focus on approximate $\epsilon$-equilibria $(\hat{\alpha},\hat{M})$, where given the process $\hat{M}$ an agent that deviates from playing $\hat{\alpha}$ can improve by at most $\epsilon$ compared to the reward earned by implementing $\hat{\alpha}$.
	Formally, approximate equilibria are defined as follows:
	
	\begin{definition}
		An \emph{$\epsilon$-equilibrium} is a pair $(\hat{\alpha}, \hat{M})$ consisting of an admissible strategy $\hat{\alpha} \in \mathcal{A}$ and a $\mathcal{P}(\mathbb{S})$-valued, $\mathfrak{G}$-adapted process $\hat{M}$ such that
		\begin{align*}
			&\mathbb{E}^{\alpha, \infty} \left[ \int_0^T \psi^{X_t} (t, Z_t, \hat M_t, \alpha_t) \de t + \Psi^{X_T}(Z_T,\hat M_T) \right] \\ 
			&\le \mathbb{E}^{\hat\alpha, \infty} \left[ \int_0^T \psi^{X_t} (t, Z_t,\hat M_t, \hat\alpha_t) \de t + \Psi^{X_T}(Z_T,\hat M_T) \right] +\epsilon
		\end{align*} 
		for every strategy $\alpha \in \mathcal{A}$, while at the same time
		\[
		\mathbb{P}^{\hat\alpha, \infty}(X_t =\cdot \;|\mathfrak{G}_t) = \hat M_t\quad\text{for all }t\in[0,T]. 
		\] 
	\end{definition}

	In the remainder of this section we prove that equilibria in feedback strategies for the game with $n$ shocks are $\epsilon_n$-equilibria for the game with an unbounded number of shocks, where the sequence $\epsilon_n$, $n\in\mathbb{N}$ decays exponentially fast. 
	To state this result, we describe how we formally relate strategies for the game with at most $n$ shocks to strategies for the game with an unbounded number of shocks:
	Namely, for a strategy $\alpha^n$ of the game with at most $n$ shocks we define the associated strategy
	\[
	\tilde{\alpha}^n(t, X_{(\cdot \wedge t)-}, Z_{(\cdot \wedge t)-}) = \alpha^n(t, X_{(\cdot \wedge t)-}, {Z_{(\cdot \wedge t)-} \wedge n})
	\]
	for the game with an unbounded number of shocks.
	This strategy chooses the same action as $\alpha^n$ at time $t$ whenever $Z_t \le n$; and if $Z_n >n$, it ignores the information that more than $n$ shocks occurred and continues to implement $\alpha^n(t,X_{(\cdot \wedge t)-}, n)$.
	
	With these preparations we can now construct an $\epsilon_n$-equilibrium from a mean field equilibrium in feedback strategies of the game with up to $n$ shocks.
	
	\begin{theorem}
		\label{thm:Approximation}
		Let $(\alpha^n,M^n)$ be an equilibrium for the mean field game with up to $n$ shocks, where $\alpha^n$ is a feedback strategy. 
		Then $(\tilde{\alpha}^n,\tilde M^n)$ with $\tilde M^n=M^{\tilde{\alpha}^n}$ as in Lemma~\ref{Lemma:PopDistGivenAlpha} is an $\epsilon_n$-equilibrium for the game with infinitely many shocks. The constant $\epsilon_n$ is given by
		\[
		\epsilon_n = 4 (\psi_\text{max}T + \Psi_\text{max}) \left(1- \exp(-\max \{\lambda_\text{max},1\}T) \right)^n
		\] where the constants are as in \eqref{eq:appendix_constants}.
		Moreover, $|\tilde{M}^n - M^n| \overset{\mathbb{P}}{\rightarrow} 0$ for $n \rightarrow \infty$.
	\end{theorem}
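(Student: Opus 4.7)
The plan is to exploit the coupling between the $n$-shock and infinite-shock games: on the localization event $A_n \defined \{Z_T \le n\}$, the two games produce pathwise identical state and shock dynamics, while $\mathbb{P}(A_n^c)$ decays geometrically in $n$ uniformly over all admissible strategies. First, I observe that by construction $X$, $Z$, $\tilde\alpha^n$ and the Kolmogorov representation of $\tilde M^n$ coincide on $A_n$ with $X^n$, $Z^n$, $\alpha^n$ and $M^n$, respectively, and the reward functional $J_\infty$ of the infinite game agrees on $A_n$ with the reward functional $J_n$ of the $n$-shock game. Moreover, integrating out the ``unused'' Poisson processes $N^{k}$ and $N^{ikj}$ with $k > n$ --- whose density factors in $\Theta^{\alpha,\infty}$ are $\mathbb{P}$-martingales independent of $(X^n,Z^n)$ and therefore have conditional mean one --- the density $\Theta^{\alpha,\infty}$ reduces on $A_n$ to $\Theta^{\alpha|_n,n}$, where $\alpha|_n \in \mathcal{A}^n$ denotes the natural restriction of $\alpha$ to $Z$-paths that happen to take values in $\{0,\ldots,n\}^{[0,T]}$.

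Next, I would establish the tail estimate $\mathbb{P}^{\alpha,\infty}(Z_T > n) \le (1 - \exp(-\lambda_\text{max} T))^n$ uniformly over $\alpha \in \mathcal{A}$. Conditionally on the first $k$ shock times, the waiting time until shock $k+1$ is stochastically minorized by an $\mathrm{Exp}(\lambda_\text{max})$ variable, since $\lambda^{k+1}(\cdot,M) \le \lambda_\text{max}$; hence the conditional probability of that shock arriving in $[0,T]$ is at most $1 - \exp(-\lambda_\text{max} T)$, and induction in $k$ yields the claim. The $\max\{\lambda_\text{max},1\}$ in the stated constant accommodates the analogous bound under $\mathbb{P}^{\alpha|_n,n}$, where the unused intensities remain at the reference value one. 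Combining this with the boundedness $|J_\infty|,|J_n| \le \psi_\text{max} T + \Psi_\text{max}$ and the localization identity from the previous paragraph, for every $\alpha \in \mathcal{A}$ one obtains
\[
\bigl|\mathbb{E}^{\alpha,\infty}[J_\infty(\alpha, \tilde M^n)] - \mathbb{E}^{\alpha|_n, n}[J_n(\alpha|_n, M^n)]\bigr| \le 2(\psi_\text{max} T + \Psi_\text{max})\bigl(1 - \exp(-\max\{\lambda_\text{max},1\}T)\bigr)^n,
\]
together with the analogous inequality with $\alpha$ replaced by $\tilde\alpha^n$ and $\alpha|_n$ by $\alpha^n$.

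Chaining these two estimates with the exact equilibrium inequality $\mathbb{E}^{\alpha|_n,n}[J_n(\alpha|_n, M^n)] \le \mathbb{E}^{\alpha^n,n}[J_n(\alpha^n, M^n)]$ for the $n$-shock game yields the desired $\epsilon_n$-optimality with $\epsilon_n = 4(\psi_\text{max} T + \Psi_\text{max})(1 - \exp(-\max\{\lambda_\text{max},1\}T))^n$. The consistency condition $\mathbb{P}^{\tilde\alpha^n,\infty}(X_t = \cdot \mid \mathfrak{G}_t) = \tilde M^n_t$ is built into the definition $\tilde M^n = M^{\tilde\alpha^n}$ via Lemma~\ref{Lemma:PopDistGivenAlpha}, and the convergence $|\tilde M^n - M^n| \overset{\mathbb{P}}{\to} 0$ follows from the same localization: the Kolmogorov forward dynamics generating $\tilde M^n$ and $M^n$ coincide pathwise on $A_n$, so $\mathbb{P}(|\tilde M^n - M^n| > 0) \le \mathbb{P}(A_n^c) \to 0$. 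The main technical obstacle I foresee is the careful factorisation $\Theta^{\alpha,\infty} = \Theta^{\alpha|_n,n} \cdot R_n^\alpha$ on $A_n$, with $R_n^\alpha$ a $\mathbb{P}$-martingale depending only on the unused Poisson processes; this is what makes the localization rigorous on the measure-theoretic level, and it parallels the factorisation $\prescript{}{Z}{\Theta^\alpha} \cdot \prescript{}{X}{\Theta^\alpha}$ already exploited in the proof of Lemma~\ref{Lemma:DistGivenMarkov}.
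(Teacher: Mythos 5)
Your proposal follows the same localization strategy as the paper's proof: identify the $n$-shock and infinite-shock games on $\{Z_T \le n\}$, bound the tail probability $\mathbb{P}^{\alpha,\infty}(Z_T>n)$ uniformly in $\alpha$, chain the resulting value-difference estimate (which is exactly the paper's Lemma~\ref{lemma:ApproxValueFkt}) with the $n$-shock equilibrium inequality, and invoke Lemma~\ref{Lemma:PopDistGivenAlpha} for consistency and the pathwise coincidence on $\{Z_T\le n\}$ for convergence of $|\tilde M^n - M^n|$. The factorisation of $\Theta^{\alpha,\infty}$ that you flag as the main technical obstacle is in fact immediate by the progressive change of measure: since each density factor $k$ is nontrivial only while $Z_s$ equals $k$ (resp.\ $k-1$), the factors for $k>Z_T$ are identically one, so $\Theta^{\alpha,\infty}_T = \Theta^{\alpha,n}_T$ on $\{Z_T\le n\}$ without any conditioning or integrating-out argument (see Appendix~\ref{sec:measure_change_unbounded}).
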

	
	To prove the result let us introduce the following notations. 
	Let $M$ be a $\mathfrak{G}$-adapted process, $M^n$ be a $\mathfrak{G}^n$-adapted process, $\alpha$ be an admissible strategy for the game with an unbounded number of shocks and $\alpha^n$ be an admissible strategy for the game with at most $n$ shocks. 
	Then we define the value of a strategy $\alpha$ given the process $M$ in the game with an unbounded number of shocks as
	\begin{align*}
		V^\infty(\alpha, M) &= \mathbb{E}^{\alpha, \infty} \left[ \int_0^T \psi^{X_t}(t,Z_t, M_t, \alpha_t) \de t + \Psi^{X_T}(Z_T,M_T) \right]    
	\end{align*} 
	and similarly, the value of the strategy $\alpha^n$ given the process $M^n$ in the game with at most $n$ shocks as
	\[
	V^n(\alpha^n, M^n) = \mathbb{E}^{\alpha^n,n} \left[ \int_0^T \psi^{X_t^n} (t, Z_t^n, M_t^n, \alpha_t^n) \de t + \Psi^{X_T^n}(Z_T^n, M_T^n) \right].
	\]
	
	Before we provide the proof of Theorem~\ref{thm:Approximation}, we link the value functions of the individual agent's control problems and the aggregate distributions with finitely many and infinitely many shocks, respectively.
	
	\begin{lemma}
		\label{lemma:ApproxValueFkt}
		Let $n \in \mathbb{N}$. Moreover, assume that $M$ is a $\mathfrak{G}$-adapted process, $M^n$ is a $\mathfrak{G}^n$-adapted process, $\alpha \in \mathcal{A}$ and $\alpha^n \in \mathcal{A}^n$ are strategies such that $M_t=M_t^n$ and $\alpha = \alpha^n$ on $\{Z_t \le n\}$. Then
		\begin{align*}
			\left| V^\infty(\alpha, M) - V^n(\alpha^n, M^n) \right| \le  2(\psi_\text{max}T + \Psi_\text{max}) \left(1- \exp(-\max \{\lambda_\text{max},1\} T) \right)^n.
		\end{align*}
	\end{lemma}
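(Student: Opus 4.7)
The plan is to decompose the difference according to whether more than $n$ shocks occur. Setting $A\defined\{Z_T\le n\}$, I observe from the pathwise constructions of the state processes from the underlying Poisson processes that $X_t = X_t^n$ and $Z_t = Z_t^n$ on $A$; combined with the hypotheses $M = M^n$ and $\alpha = \alpha^n$ on $\{Z_t\le n\}$, the integrand and terminal term in $V^\infty$ and $V^n$ agree pathwise on $A$. Denoting this common random variable by $R$, I would write
\[
V^\infty(\alpha,M) - V^n(\alpha^n,M^n) = \bigl(\mathbb{E}^{\alpha,\infty}[R \mathbb{I}_A] - \mathbb{E}^{\alpha^n,n}[R \mathbb{I}_A]\bigr) + \mathbb{E}^{\alpha,\infty}\bigl[R^\infty \mathbb{I}_{A^c}\bigr] - \mathbb{E}^{\alpha^n,n}\bigl[R^n \mathbb{I}_{A^c}\bigr],
\]
where $R^\infty$ and $R^n$ denote the full reward functionals in the two games.

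The crux is to show that the first parenthesized term vanishes. I would argue via uniqueness in law: stopping $(X,Z)$ at $\tau\defined\inf\{t:Z_t>n\}$, the stopped process $(X^\tau,Z^\tau)$ under $\mathbb{P}^{\alpha,\infty}$ has exactly the same jump intensities as $(X^n,Z^n)$ under $\mathbb{P}^{\alpha^n,n}$, because $M=M^n$ and $\alpha=\alpha^n$ on $\{Z\le n\}$ and the relevant intensities only involve the current shock count, which is $\le n$. By well-posedness of the martingale problem for these finite-state Markov jump dynamics, the two laws coincide. Since $R\mathbb{I}_A$ is a bounded measurable functional of the stopped process (on $A=\{\tau>T\}$ the stopped trajectory equals $(X,Z)$ on $[0,T]$, and $\mathbb{I}_A$ itself is determined by the stopped process), the two expectations agree. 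Alternatively, one can verify this directly by factoring $\Theta^{\alpha,\infty}$ as in Appendix~\ref{sec:measure_change_unbounded} into the part depending on $(N^k)_{k\le n}$ and $(N^{ikj})_{k\le n}$---which coincides with $\Theta^{\alpha^n,n}$ on $A$---and a complementary factor whose conditional expectation given the ``first $n$'' noise sources is $1$.

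For the remaining contributions I would use the uniform bound $|R^\infty|,|R^n|\le\psi_\text{max}T+\Psi_\text{max}$, yielding the estimate $(\psi_\text{max}T+\Psi_\text{max})\bigl(\mathbb{P}^{\alpha,\infty}(A^c)+\mathbb{P}^{\alpha^n,n}(A^c)\bigr)$. Under $\mathbb{P}^{\alpha,\infty}$ the inter-arrival times of $Z$ are stochastically dominated from below by i.i.d.\ Exp$(\lambda_\text{max})$ random variables $\tau_1,\ldots,\tau_n$; since $\{Z_T>n\}\subseteq\{\tau_1+\cdots+\tau_n\le T\}\subseteq\{\tau_i\le T\text{ for all }i\}$, I obtain $\mathbb{P}^{\alpha,\infty}(A^c)\le(1-e^{-\lambda_\text{max}T})^n$. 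Under $\mathbb{P}^{\alpha^n,n}$ the processes $N^k$ with $k>n$ retain unit intensity, so the same argument with $\max(\lambda_\text{max},1)$ in place of $\lambda_\text{max}$ yields the analogous bound. Combining these estimates produces exactly the claimed constant $2(\psi_\text{max}T+\Psi_\text{max})(1-\exp(-\max\{\lambda_\text{max},1\}T))^n$.

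I expect the main obstacle to be the uniqueness-in-law/factorization step for the measures restricted to $A$: making precise that $(X^\tau,Z^\tau)$ under $\mathbb{P}^{\alpha,\infty}$ and $(X^n,Z^n)$ under $\mathbb{P}^{\alpha^n,n}$ induce the same law, given that the intensities are non-Markovian (depending on the whole past through $M$) and the change of measure on the infinite-shock side involves infinitely many Poisson processes with intensities that vanish off $\{Z_{t-}=k-1\}$. Once this identification is established by leaning on the construction in Appendix~\ref{sec:measure_change_unbounded}, the remaining estimates reduce to elementary boundedness and stochastic-domination arguments.
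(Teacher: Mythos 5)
Your decomposition by $A=\{Z_T\le n\}$, the pathwise identification of the reward functionals on $A$, and the tail bound on $\mathbb{P}^{\alpha,\infty}(A^c)$ and $\mathbb{P}^{\alpha^n,n}(A^c)$ all match the paper's proof. Where you differ is the ``crux'' you flag as the main obstacle: showing that $\mathbb{E}^{\alpha,\infty}[R\mathbb{I}_A]=\mathbb{E}^{\alpha^n,n}[R\mathbb{I}_A]$. You propose either a uniqueness-in-law argument for the stopped process or a factorization plus a conditional-expectation step; neither is needed. The paper rewrites both expectations under the reference measure $\mathbb{P}$ and uses the fact, stated explicitly at the end of Appendix~\ref{sec:measure_change_unbounded}, that the Radon--Nikodym densities satisfy $\Theta_T^{\alpha,\infty}=\Theta_T^{\alpha^n,n}$ \emph{pathwise} on $\{Z_T\le n\}$ (the products in the infinite-shock density run up to $Z_T$, and the extra factors in the $n$-shock density for indices $k$ with $Z_T<k\le n$ are identically $1$ on this event). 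Combined with $M=M^n$, $\alpha=\alpha^n$, $X=X^n$, $Z=Z^n$ on $A$, the two integrands $\Theta_T^{\alpha,\infty}R\mathbb{I}_A$ and $\Theta_T^{\alpha^n,n}R\mathbb{I}_A$ are literally the same random variable under $\mathbb{P}$, so the cancellation is immediate; your ``complementary factor with conditional expectation $1$'' is in fact the constant $1$ on $A$. Your second alternative would ultimately land here, but the uniqueness-in-law route for a non-Markovian jump system controlled via random measures is substantial machinery that the pathwise density identity makes unnecessary. The remaining estimates in your proposal (uniform boundedness of the reward, and stochastic domination of the inter-arrival times giving the factor $1-\exp(-\max\{\lambda_{\max},1\}T)$ per shock) agree with the paper's.
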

	
	\begin{proof} 
		We have
		\begin{align*}
			&\left| V^\infty(\alpha, M) - V^n(\alpha^n, M^n) \right| \\
			&= \left| \mathbb{E}^{\alpha, \infty} \left[ \int_0^T \psi^{X_t} (t, Z_t, M_t, \alpha_t) \de t + \Psi^{X_T}(Z_T,M_T) \right] \right. \\
			&\quad \left. - \mathbb{E}^{\alpha^n,n} \left[ \int_0^T \psi^{X_t^n} (t, Z_t^n, M_t^n, \alpha_t^n) \de t + \Psi^{X_T^n}(Z_T^n,M_T^n) \right] \right|\\
			&= \left| \mathbb{E} \left[ \Theta_T^{\alpha, \infty} \left( \mathbb{I}_{\{Z_T \le n\}} \left( \int_0^T \psi^{X_t}(t,Z_t, M_t, \alpha_t) \de t + \Psi^{X_T}(Z_T, M_t) \right) \right. \right. \right. \\
			& \qquad \left. \left. + \mathbb{I}_{\{Z_T >n\}} \left( \int_0^T \psi^{X_t}(t,Z_t, M_t, \alpha_t) \de t + \Psi^{X_T}(Z_T,M_T) \right) \right) \right]. \\
			& \quad  - \mathbb{E} \left[ \Theta_T^{\alpha^n,n} \left( \mathbb{I}_{\{Z_T \le n\}} \left( \int_0^T \psi^{X_t^n}(t,Z_t^n, M_t^n, \alpha^n_t) \de t + \Psi^{X_T^n}(Z_T^n, M_T^n) \right) \right. \right. \\
			& \qquad \left. \left. \left. + \mathbb{I}_{\{Z_T >n\}} \left( \int_0^T \psi^{X_t^n}(t,Z_t^n, M_t^n, \alpha_t^n) \de t + \Psi^{X_T^n}(Z_T^n,M_T^n) \right) \right) \right] \right| .
		\end{align*} Since on $\{Z_T \le n\}$ we have $X=X^n$, $Z=Z^n$, $\alpha=\alpha^n$, $\Theta_T^{\alpha, \infty} = \Theta_T^{\alpha^n,n}$ and $M_t = M_t^n$, we obtain
		\begin{align*}
			&\left| V^\infty(\alpha, M) - V^n(\alpha^n, M^n) \right| \\
			&= \left| \mathbb{E}^{\alpha, \infty} \left[   \mathbb{I}_{\{Z_T >n\}} \left( \int_0^T \psi^{X_t}(t,Z_t, M_t, \alpha_t) \de t + \Psi^{X_T}(Z_T,M_T) \right) \right] \right. \\
			& \quad - \left. \mathbb{E}^{\alpha^n,n} \left[  \mathbb{I}_{\{Z_T >n\}} \left( \int_0^T \psi^{X_t^n}(t,Z_t^n, M_t^n, \alpha_t^n) \de t + \Psi^{X_T^n}(Z_T^n,M_T^n) \right)  \right] \right| \\
			&\le \mathbb{P}^{\alpha, \infty}(Z_T>n) (\psi_\text{max}T + \Psi_\text{max}) + \mathbb{P}^{\alpha^n,n}(Z_T>n) (\psi_\text{max}T + \Psi_\text{max}).
		\end{align*} Moreover, we have that
		\begin{align*}
			\mathbb{P}^{\alpha, \infty}(Z_T >n) &\le \mathbb{P}^{\alpha,\infty} \left(N_T^k \ge 1 \quad \text{for all } k \in \{1, \ldots, n\}\right) \\
			&= \prod_{k=1}^n \left( 1- \exp \left( - \int_0^T \left( \lambda^k(t,M_t) \mathbb{I}_{\{Z_t = k-1\}} + \mathbb{I}_{\{Z_t \neq k-1\}} \right) \de t \right) \right) \\
			&\le \prod_{k=1}^n \left( 1 - \exp \left( - \max \{\lambda_\text{max}, 1\} T \right) \right) \\
			&= \left( 1 - \exp \left( - \max \{\lambda_\text{max}, 1\} T \right) \right)^n.
		\end{align*} Analogously, we obtain for $\alpha^n$ that
		\begin{align*}
			\mathbb{P}^{\alpha^n,n}(\tilde{Z}_T >n) &\le \left( 1 - \exp \left( - \max \{\lambda_\text{max}, 1\} T \right) \right)^n,
		\end{align*} which completes the proof.
	\end{proof}
	
	\begin{lemma}[aggregation]
		\label{Lemma:PopDistGivenAlpha}
		Let $\alpha \in \mathcal{A}$ be a feedback strategy.
		Then, using $\tau_0=0$ and $\tau_k = \inf \{s \ge \tau_{k-1} : Z_s = k\}$, $k \in \mathbb{N}$, the process $M^\alpha$ given by 
		\[
		(M_t^\alpha)^j (\omega) = \begin{cases}
			(M_{\tau_{k-1}}^\alpha)^j + \int_{\tau_k}^t \sum_{i \in \mathbb{S}} (M_s^\alpha)^i Q^{ij}(s, Z_{s-}, M_s^\alpha, \nu(s,Z_{(\cdot \wedge s)-}, i)) \de s\quad&\text{if } \tau_{k-1}(\omega) < t < \tau_k (\omega) \\
			\sum_{i \in \mathbb{S}} \mathbb{I}_{\{J^i(t,M_{t-})=j\}} (M_{t-}^\alpha)^i &\text{if } t=\tau_k(\omega), k \in \mathbb{N} \\
			\mathbb{P}(X_0=j) &\text{if } t=0
		\end{cases}
		\] 
		is $\mathfrak{G}$-adapted and $\mathcal{P}(\mathbb{S})$-valued.
		Moreover, it satisfies $M_t^\alpha = \mathbb{P}^{\alpha,\infty}(X_t =\cdot|\mathfrak{G}_t)$ for all $t \in [0,T]$.
	\end{lemma}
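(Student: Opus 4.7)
The plan is to verify three assertions in order: (i) the process $M^\alpha$ is well-defined and $\mathcal{P}(\mathbb{S})$-valued, (ii) it is $\mathfrak{G}$-adapted, and (iii) it coincides with the conditional aggregate distribution $\mathbb{P}^{\alpha,\infty}(X_t=\cdot\mid \mathfrak{G}_t)$. For (i), on each interval $[\tau_{k-1},\tau_k)$ the defining relation is a Kolmogorov forward ODE with bounded intensity-matrix coefficients, so a Carathéodory argument of the type used in Lemma~\ref{lemma:appendix_existence_mu} provides a unique solution, and since intensity matrices infinitesimally preserve the probability simplex the trajectory stays in $\mathcal{P}(\mathbb{S})$. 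At each $\tau_k$ the prescribed jump transports mass from state $i$ to $J^i(\tau_k,M^\alpha_{\tau_k-})\in\mathbb{S}$, so it is simplex-preserving as well. Adaptedness (ii) is immediate since the construction depends measurably only on $(\tau_k)_{k\ge 0}$ and $Z_{\cdot-}$, which are $\mathfrak{G}$-measurable.

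The substance of the lemma is (iii), which I would establish by localising to the finite-shock setting. For each $n\in\mathbb{N}$, let $\alpha^n\in\mathcal{A}^n$ denote the $n$-shock feedback strategy induced by $\nu$, and let $M^{\alpha,n}$ be its associated aggregate process furnished by Theorem~\ref{thm:DistributionGivenMarkovStrat}. On the event $\{Z_T\le n\}$ the trajectories of $Z$ and $X$ agree pathwise with those of $Z^n$ and $X^n$, the filtrations $\mathfrak{G}$ and $\mathfrak{G}^n$ coincide, and the construction in Appendix~\ref{sec:measure_change_unbounded} guarantees that the change-of-measure densities $\Theta_T^{\alpha,\infty}$ and $\Theta_T^{\alpha^n,n}$ also agree on this event. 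Consequently, Bayes' rule yields
\[
\mathbb{I}_{\{Z_T\le n\}}\, \mathbb{P}^{\alpha,\infty}(X_t = \cdot \mid \mathfrak{G}_t) = \mathbb{I}_{\{Z_T\le n\}}\, \mathbb{P}^{\alpha^n,n}(X_t^n = \cdot \mid \mathfrak{G}_t^n) = \mathbb{I}_{\{Z_T\le n\}}\, M_t^{\alpha,n} = \mathbb{I}_{\{Z_T\le n\}}\, M_t^\alpha,
\]
where the middle equality is Theorem~\ref{thm:DistributionGivenMarkovStrat} and the last one is the definition of $M^\alpha$. Since the bound $\mathbb{P}^{\alpha,\infty}(Z_T>n)\le (1-\exp(-\max\{\lambda_\text{max},1\}T))^n$ established in the proof of Lemma~\ref{lemma:ApproxValueFkt} gives $\mathbb{P}^{\alpha,\infty}(Z_T\le n)\to 1$ as $n\to\infty$, letting $n\to\infty$ in the displayed identity yields (iii) $\mathbb{P}^{\alpha,\infty}$-a.s.

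The main obstacle is the consistency between the infinite-shock conditional law and its $n$-shock counterpart on $\{Z_T\le n\}$. This requires a product/factorisation structure of the density $\Theta^{\alpha,\infty}$ analogous to Remark~\ref{rem:change_of_measure_two_step}, which is precisely what the construction in Appendix~\ref{sec:measure_change_unbounded} delivers; with that in hand, restricting to events in $\{Z_T\le n\}\cap\mathfrak{G}_t$ makes Bayes' rule collapse the two conditional expectations to a common value. If one preferred to avoid this reduction, a direct Itô expansion of $b^{X_t}$ for $b\in\mathbb{R}^S$ exactly as in Lemma~\ref{Lemma:DistGivenMarkov}, together with Fubini to move $\mathfrak{G}_T$ inside the compensated Poisson integrals (valid because at most finitely many of the $N^k$ and $N^{ikj}$ jump on $[0,T]$ a.s.), would reproduce the integral-plus-jump equation satisfied by $M^\alpha$; uniqueness for that equation then gives the identification. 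The localisation route is however shorter because it reuses the finite-shock theory verbatim.
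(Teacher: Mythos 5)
Parts (i) and (ii) of your argument (well-definedness, simplex invariance, adaptedness) follow the same reasoning as the paper and are fine. The interesting part is (iii), where you take a genuinely different route than the paper: the paper repeats the It\^o/compensated-martingale expansion of Lemma~\ref{Lemma:DistGivenMarkov} in the unbounded-shock setting, obtains the integral-plus-jump equation for $\mathbb{P}^{\alpha,\infty}(X_t=\cdot\mid\mathfrak{G}_T)$ directly, and then invokes pathwise uniqueness; your secondary remark at the end is exactly this and is correct. Your primary route instead localises to the $n$-shock model and lets $n\to\infty$, which is an attractive idea because it would allow the finite-shock theory to be reused verbatim.

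However, there is a gap in the key step of the localisation. The displayed identity
\[
\mathbb{I}_{\{Z_T\le n\}}\, \mathbb{P}^{\alpha,\infty}(X_t = \cdot \mid \mathfrak{G}_t) = \mathbb{I}_{\{Z_T\le n\}}\, \mathbb{P}^{\alpha^n,n}(X_t^n = \cdot \mid \mathfrak{G}_t^n)
\]
is not a consequence of the facts you cite. The conditional expectation given $\mathfrak{G}_t$ integrates over all of $\Omega$, including the complementary event $\{Z_T>n\}$ on which the densities $\Theta_T^{\alpha,\infty}$ and $\Theta_T^{\alpha^n,n}$ and the processes $X$ and $X^n$ genuinely differ; and $\{Z_T\le n\}$ is $\mathfrak{G}_T$-measurable, not $\mathfrak{G}_t$-measurable, so the indicator cannot be absorbed into the conditioning. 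Pathwise agreement of the terminal densities on $\{Z_T\le n\}$ therefore does not ``collapse'' the two conditional expectations by Bayes. The statement is in fact true, but to justify it you need to (a) project to time $t$ via the martingale property, replacing $\Theta_T^{\alpha,\infty}$ by the density process $\Theta_t^{\alpha,\infty}$ (and likewise for $\Theta_t^{\alpha^n,n}$), (b) observe that these time-$t$ densities agree pathwise on the event $\{Z_t\le n\}$, which, crucially, \emph{is} $\mathfrak{G}_t$-measurable, and (c) run the defining test-function identity of conditional expectation over $\mathfrak{G}_t$-events $G\subseteq\{Z_t\le n\}$, using the two-step factorisation of Remark~\ref{rem:change_of_measure_two_step} to cancel the common $\mathfrak{G}_t$-measurable $\prescript{}{Z}{\Theta}$-factor. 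Without spelling this out, the ``Bayes' rule collapse'' on ``events in $\{Z_T\le n\}\cap\mathfrak{G}_t$'' does not parse as a sigma-algebra argument, since $\{Z_T\le n\}\notin\mathfrak{G}_t$. Once this step is made precise, your localisation argument gives a clean alternative to the paper's direct It\^o computation.
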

	
	\begin{proof}
		The process $M^\alpha$ is well-defined as the unique solution of an ordinary differential equation with a Lipschitz continuous right-hand side on each interval $[\tau_{k-1}(\omega),\tau_k(\omega))$, $k \in \mathbb{N}$.
		Moreover, it is immediate that $M_t^\alpha$ is $\mathfrak{G}_t$-measurable and that $M_t^\alpha \in \mathcal{P}(\mathbb{S})$.
		Furthermore, as in Lemma~\ref{Lemma:DistGivenMarkov}, we obtain that 
		\begin{align*}
			&\mathbb{P}^{\alpha, \infty}(X_t = l|\mathfrak{G}_T) \\
			&= \mathbb{P}^{\alpha, \infty}(X_0=l) + \sum_{i \in \mathbb{S}} \int_0^t \mathbb{P}^{\alpha, \infty} (X_{s-} = i|\mathfrak{G}_T) Q^{il}(s,Z_{s-}, M_s^\alpha, \alpha(s,Z_{(\cdot \wedge s)-},i)) \de s \\
			&\quad + \sum_{k=1}^{Z_t} \int_0^t \mathbb{I}_{\{Z_{s-} = k-1\}} \left( \sum_{i \in \mathbb{S}} \mathbb{I}_{\{J^i(s,M_{s-})=l\}} \mathbb{P}^{\alpha, \infty}(X_{s-}=i|\mathfrak{G}_T) - \mathbb{P}^{\alpha, \infty} (X_{s-}=l|\mathfrak{G}_T) \right) \de N_s^k.
		\end{align*} This implies the desired result.
	\end{proof}
	
	Using Lemmas~\ref{lemma:ApproxValueFkt} and \ref{Lemma:PopDistGivenAlpha} it is now easy to complete the proof of Theorem~\ref{thm:Approximation}:
	
	\begin{proof}[Proof of Theorem~\ref{thm:Approximation}]
		We first note that $\alpha^n = \tilde{\alpha}^n$ on $\{Z_T \le n\}$. 
		The processes $M^n$ and $\tilde{M}^n = M^{\tilde{\alpha}^n}$ are defined pathwise and the definitions agree on $\{Z_T \le n\}$ (see Lemma~\ref{Lemma:PopDistGivenAlpha}).
		Hence, $\tilde{M}^n$ and $M^n$ coincide on $\{Z_T \le n\}$. 
		Noting that $\mathbb{P}(Z_T >n)\le(1-\exp(-T))^n \rightarrow 0$, we have the desired convergence of $|\tilde{M}^n - M^n|$ in probability. Moreover, by Lemma~\ref{Lemma:PopDistGivenAlpha} the process $\tilde{M}^n$ satisfies $\mathbb{P}^{\tilde{\alpha}^n, \infty}(X_t = \cdot \; |\mathfrak{G}_t) = \tilde{M}_t^n$ for all $t \in [0,T]$.
		
		Now, let $\alpha$ be an arbitrary admissible strategy for the game with infinitely many shocks.
		Then by Lemma~\ref{lemma:ApproxValueFkt}
		\begin{align*}
			V^\infty(\alpha, \tilde{M}^n) \le V^n(\alpha, M^n) + \tfrac{1}{2} \epsilon_n.
		\end{align*} Since $\alpha^n$ is an equilibrium for the game with $n$ shocks, we obtain
		\begin{align*}
			V^n(\alpha, M^n) + \tfrac{1}{2} \epsilon_n \le V^n(\alpha^n, M^n) + \tfrac{1}{2} \epsilon_n.
		\end{align*} 
		Finally, another application of Lemma~\ref{lemma:ApproxValueFkt} yields 
		\begin{align*}
			V^n(\alpha^n, M^n) + \tfrac{1}{2} \epsilon_n \le V^\infty(\tilde{\alpha}^n, \tilde{M}^n) +  \epsilon_n.
		\end{align*} 
		Combining all three inequalities yields the desired claim.
	\end{proof}

	\section{Application: Corruption Detection with Random Audits} \label{sec:corruption}
	
	We consider a variant of the corruption detection model in \cite{KolokoltsovCorruption2017} that includes unpredictable audits at random times, where corrupt agents are identified and penalized.
	As in the original model, the agents' state space is $\mathbb{S} = \{C,H,R\}$, where $C$ means that the agent is corrupt, $H$ means that the agent is honest and $R$ means that the agent is in a penalized state. 
	The agent's income satisfies $r_C\ge r_H\ge r_R \ge 0$, and in states $C$ and $H$ the agent has the possibility to exert effort to switch to another state. Moreover, he faces peer pressure in two ways: First, the higher the percentage of corrupt agents, the larger becomes the rate for an honest agent to also turn corrupt. Second, the smaller the fraction of corrupt agents, the larger is the detection rate and, thus, the risk to be caught and transferred to the penalized state $R$. Once an agent is caught and moved to state $R$, he returns to the honest state with a fixed rate $q_\text{rec}$.
	The novel aspect in our model is the presence of random audits: These occur at a rate $\lambda$ and result in all corrupt agents being transferred to state $R$ immediately.
	Figure~\ref{figure:Corruption} displays the model dynamics, including state transitions due to the agent's individual initiative and due to audits where $q_\text{soc}, q_\text{inf}, q_\text{rec} \ge 0$.
	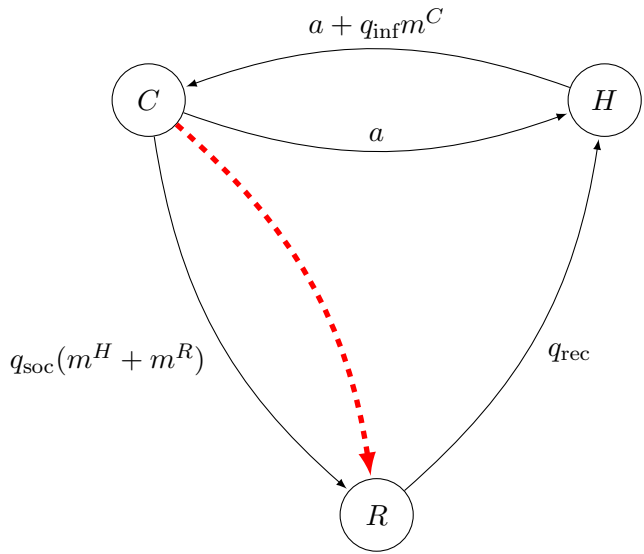
\begin{figure}
		\begin{center}
			\begin{tikzpicture}
				
				\node[state] at (0,0) (A)     {$C$};
				\node[state] at (6,0) (B)     {$H$};
				\node[state] at (3, -5.5)  (C)     {$R$};
				
				\draw[every loop, >=latex, auto=right] 
				(A) edge[bend right=20, auto=left] node {$a$} (B)
				(B) edge[bend right=20] node {$a+q_\text{inf}m^C$} (A)
				(A) edge[bend right=20] node {$q_\text{soc}(m^H+m^R)$} (C)
				(C) edge[bend right=20] node {$q_\text{rec}$} (B);
				
				\draw[every loop, >=latex, red, dashed, line width = 0.7mm] 
				(A) edge[bend left=20] (C);
				
			\end{tikzpicture}
		\end{center}
		\caption{Dynamics in the corruption detection model. The red dashed line represents the transitions due to audits.}
		\label{figure:Corruption}
	\end{figure}
	The agent aims to maximize
	\[
	\mathbb{E}^\alpha \left[ \int_0^T \left(r^{X_t} - \tfrac{1}{2} \alpha_t^2 \right)\de t \right].
	\]
	
	Formally, the relevant model coefficients are thus given by
	\begin{align*}
		Q(t,k,m,a) &= \begin{pmatrix}
			- q_\text{soc} (m^H+m^R) - a & a & q_\text{soc} (m^H+m^R) \\
			a + q_\text{inf} m^C & - a-q_\text{inf}m^C & 0 \\
			0 & q_\text{rec} & - q_\text{rec}
		\end{pmatrix} \\
		\psi(t,k,m,a) &= \begin{pmatrix}
			r^C - \frac{1}{2} a^2 \\
			r^H - \frac{1}{2} a^2 \\
			r^R
		\end{pmatrix} \\
		J(t,k,m) &= \begin{pmatrix}
			R \\ H \\ R
		\end{pmatrix}.
	\end{align*}
	For our numerical results, we set the initial distribution to $m = (0.2, 0.8,0)$, and the relevant parameters as reported in Table~\ref{table:Corruption}.
	
	\begin{table}[h!]
		\begin{center}
			\begin{tabular}{cccccccccccccc}
				\hline
				Parameter & $T$ & $n$ & $\lambda$ & $q_\text{inf}$ & $q_\text{soc}$ & $q_\text{rec}$ & $c_\text{dec}$ & $r^C$ & $r^H$ & $r^R$ \\
				\hline
				Value & 2 & 2 & 2 & 5 & 2 & 0.5 & 2 & 10 & 5 & 0 \\
				\hline
			\end{tabular}
		\end{center}
		\caption{Coefficients for the corruption detection model.}
		\label{table:Corruption}
	\end{table}

	\begin{figure}
		\centering
		\begin{subfigure}[b]{0.49\textwidth}
			\includegraphics[width=\textwidth]{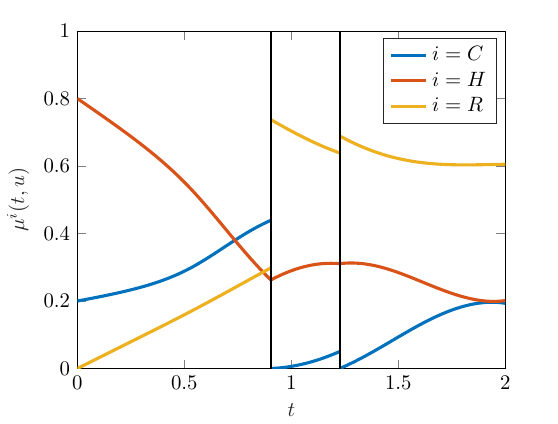}
		\end{subfigure}
		\hfill
		\begin{subfigure}[b]{0.49\textwidth}
			\includegraphics[width=\textwidth]{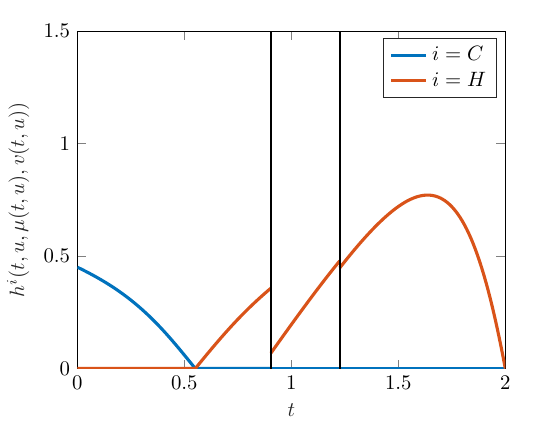}
		\end{subfigure}
		\caption{Evolution of the equilibrium distribution (left) and the equilibrium control (right) for one sample path. Shock times (audits) are represented by black vertical lines.}
		\label{fig:Corruption_Control}
	\end{figure}
	
	\begin{figure}
		\centering
		\begin{subfigure}[b]{0.49\textwidth}
			\includegraphics[width=\textwidth]{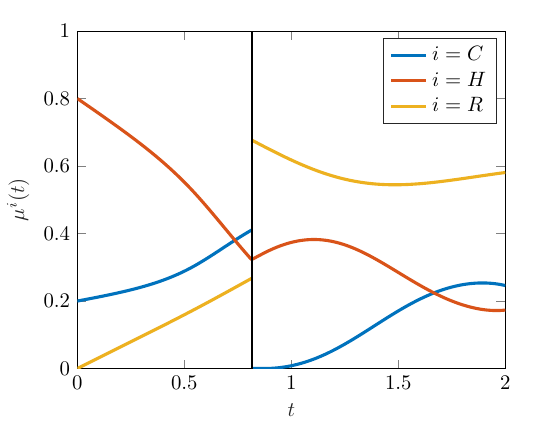}
		\end{subfigure}
		\hfill
		\begin{subfigure}[b]{0.49\textwidth}
			\includegraphics[width=\textwidth]{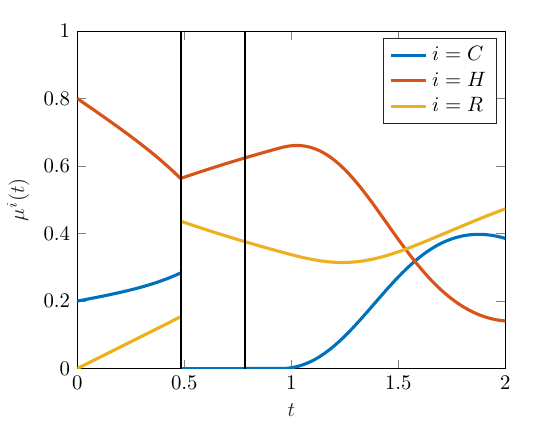}
		\end{subfigure}
		\newline
		\centering
		\begin{subfigure}[b]{0.49\textwidth}
			\includegraphics[width=\textwidth]{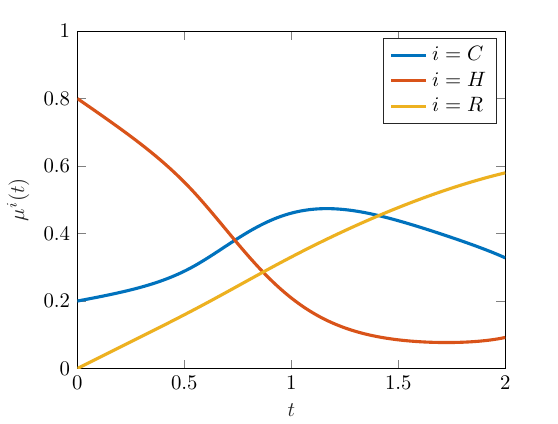}
		\end{subfigure}
		\hfill
		\begin{subfigure}[b]{0.49\textwidth}
			\includegraphics[width=\textwidth]{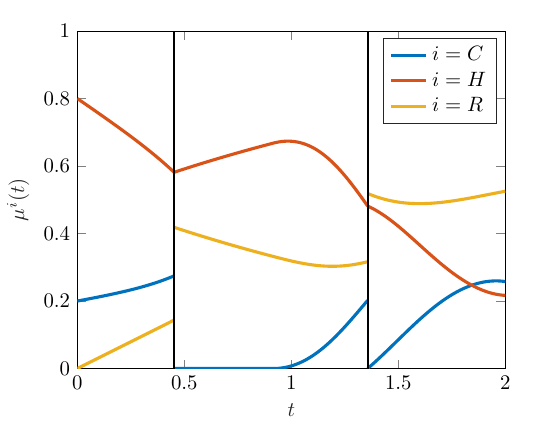}
		\end{subfigure}
		\caption{Evolution of the equilibrium distribution for different sample paths. Shock times (audits) are represented by black vertical lines.}
		\label{fig:CorruptionFirst}
	\end{figure}
	
	\begin{figure}
		\centering
		\includegraphics[width=0.5\textwidth]{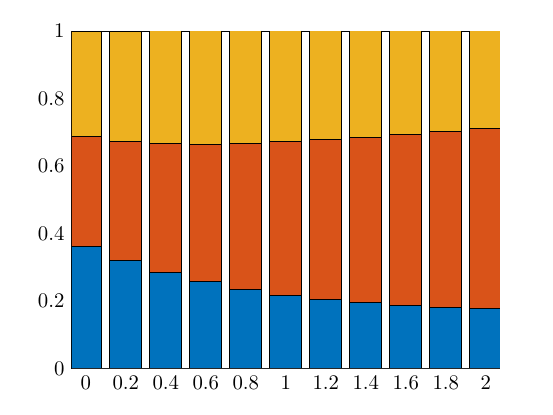}
		\caption{Expected overall shares of agents: corrupt (blue, bottom), honest (red, middle), reserved (yellow, top), for varying levels of the audit intensity $\lambda$. The first column ($\lambda=0$) represents the special case of the model without audits.}
		\label{fig:corruption_effect}
	\end{figure}
	
	\begin{figure}
		\centering
		\includegraphics[width=0.5\textwidth]{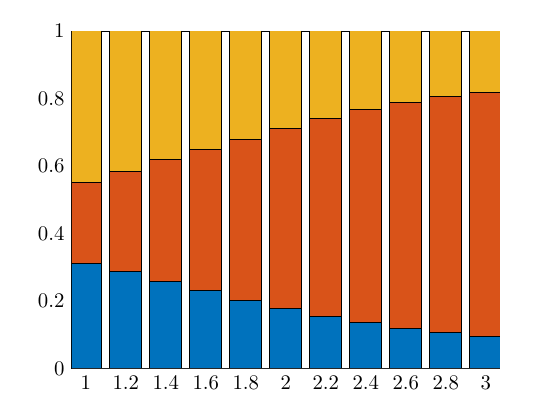}
		\caption{Expected overall shares of agents: corrupt (blue, bottom), honest (red, middle), reserved (yellow, top), for varying levels of the peer pressure intensity $q_\text{soc}$.}
		\label{fig:corruption_peer_pressure_effect}
	\end{figure}
	
	Figure~\ref{fig:Corruption_Control} shows one sample path of the equilibrium distribution and control.
	Figure~\ref{fig:CorruptionFirst} illustrates four sample paths of the equilibrium distribution for different realizations of the driving randomness.
	We observe that the equilibrium distribution is mainly governed by two effects: When the environment is corrupted in the sense that the share of corrupt agents is large, the likelihood for honest agents to turn corrupt increases. Conversely, when there are no or few corrupt agents, the probability of being identified as corrupt increases for corrupt agents, making it relatively more attractive to become honest.
	In addition, all corrupt agents are moved to the penalized state upon audits (vertical black lines).
	
	The average reduction of the share of corrupt agents due to the presence of random audits can be quantified.
	To wit, we compute the expected share of agents averaged over time, via $\tfrac{1}{T}\mathbb{E} \left[\int_0^T \mu^i(t,U_t) \de t\right]$ for all three states $i \in \{C,H,R\}$.
	We compare the resulting shares without audits (first bar) and for different values of the intensity of audits ranging from $0.2$ to $2$ in Figure~\ref{fig:corruption_effect}. The share of corrupt agents is decreasing with $\lambda$, whereas the share of honest agents is increasing.
	The main transmission channel of audits is through the creation of a corruption-free environment immediately after each audit, making the environment more honest over time on aggregate and thus -- via the "peer pressure" effect -- reducing the payoff of being corrupt. The relevance of this "peer pressure" effect is illustrated in Figure~\ref{fig:corruption_peer_pressure_effect}. Here, we again observe that an increase in the peer pressure through the rate $q_\text{soc}$ leads to a smaller share of corrupt agents and a larger share of honest agents.
	
	\bibliography{literatureCommonShocks} 		
	\bibliographystyle{plain}
	
	\appendix
	
	\section{Construction of the Measure Changes} \label{appendix:measure_change}
	
	In this section we detail the construction of the probability measures $\mathbb{P}^\alpha$ in the model formulations in Section~\ref{sec:model} and Section~\ref{sec:approximation}, via suitable changes of measure.
	In particular, we demonstrate that the relevant processes have the desired intensities under those measures.
	
	\subsection{Model Setup with $n$ Shocks}
	\label{sec:measure_change_bounded}
	
	We use the setup defined in Section~\ref{sec:model}.
	For each admissible strategy $\alpha\in\mathcal{A}$ we define the probability measure $\mathbb{P}^\alpha$ via
	\begin{align}
		\label{eq:DefPAlpha}
		\begin{split}
			\frac{\de \mathbb{P}^\alpha}{\de \mathbb{P}} &= \prod_{i, j \in \mathbb{S}: i \neq j} \prod_{k =0}^n \left(  \exp \left\{ \int_0^T (1-Q^{ij}(s,k,M_s,\alpha_s)) \mathbb{I}_{\{Z_s = k\}} \de s \right\} \right.  \\
			&\qquad \left. \cdot \prod_{s \in (0,T]: \Delta N_s^{ikj} \neq 0} \left(Q^{ij}(s,k,M_s,\alpha_s) \mathbb{I}_{\{Z_s = k\}} + \mathbb{I}_{\{Z_s \neq k\}} \right)  \right) \\
			&\quad \cdot \prod_{l =1}^n \left( \exp \left\{ \int_0^T (1-\lambda^l(s,M_s)) \mathbb{I}_{\{Z_s = l-1\}}  \de s \right\} \right. \\
			&\qquad \left. \cdot \prod_{s \in (0,T]: \Delta N_s^l \neq 0} \left( \lambda^l(s,M_s) \mathbb{I}_{\{Z_s = l-1\}}  + \mathbb{I}_{\{Z_s \neq l-1\}} \right) \right).
		\end{split}
	\end{align}
	
	\begin{remark}
		As a technical sidenote, observe that we change the intensities of the counting processes $N^{k}$ and $N^{ikj}$ progressively, i.e.\ only at the times that are actually relevant for the dynamics of $Z$ or $X$.
		In the setup of Section~\ref{sec:model}, it would also be possible to modify all jump intensities at once (this would, of course, not affect the mean field equilibrium); however, in the model setup of Section~\ref{sec:approximation} with an unlimited number of shocks this is not feasible.
		For consistency, we therefore use the definition that works in both models. \remEnde
	\end{remark}
	
	The following proposition summarizes the relevant properties of $\mathbb{P}^\alpha$, including the fact that the processes $N^k$ and $N^{ikj}$ have the desired intensities.
	
	\begin{proposition}
		\label{Thm:ChangeOfMeasure}
		For every admissible strategy $\alpha\in\mathcal{A}$, $\mathbb{P}^\alpha$ is a well-defined probability measure on $(\Omega,\mathfrak{A})$ with $\mathbb{P}^\alpha = \mathbb{P}$ on $\sigma(X_0)$. 
		Moreover, for each $k\in\{1,\ldots,n\}$ the process $N^k$ is a counting process with $(\mathfrak{F},\mathbb{P}^\alpha)$-intensity $\lambda^k=(\lambda^k_t)_{t \in [0,T]}$,
		\[
		\lambda^k_t \defined \lambda^k(t,M_t) \mathbb{I}_{\{Z_t=k-1\}} + \mathbb{I}_{\{Z_t\neq k-1\}},\quad t\in[0,T]
		\]
		and for all $i,j\in\mathbb{S}$ and $k\in\{0,1,\ldots,n\}$ with $i \neq j$ the process $N^{ikj}$ is a counting process with $(\mathfrak{F},\mathbb{P}^\alpha)$-intensity $\lambda^{ikj}=(\lambda^{ikj}_t)_{t \in [0,T]}$,
		\[
		\lambda^{ikj}_t \defined Q^{ij}(t,k,M_t,\alpha_t) \mathbb{I}_{\{Z_t=k\}} + \mathbb{I}_{\{Z_t\neq k\}},\quad t\in[0,T].
		\]
	\end{proposition}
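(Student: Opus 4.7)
The proof would proceed by identifying the density in \eqref{eq:DefPAlpha} as a product of Doléans-Dade exponentials, one per Poisson process $N^k$ and $N^{ikj}$, and then invoking the classical Girsanov/Watanabe theorem for counting processes (see, e.g., Brémaud, \emph{Point Processes and Queues}, Chapter VI). Concretely, for a standard Poisson process $N$ under $\mathbb{P}$ and a bounded non-negative $\mathfrak{F}$-predictable process $(\mu_s)$, the random variable
\[
L_T^\mu \defined \exp\!\left\{\int_0^T (1-\mu_s)\,\de s\right\} \prod_{s\in(0,T]:\Delta N_s\neq 0}\mu_s
\]
is the terminal value of the positive $(\mathfrak{F},\mathbb{P})$-martingale $L^\mu$, and under $\de\mathbb{Q}/\de\mathbb{P}=L_T^\mu$ the process $N$ admits $(\mathfrak{F},\mathbb{Q})$-intensity $\mu$. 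I would apply this formula to each of the factors appearing in \eqref{eq:DefPAlpha} with predictable integrands
\[
\mu^{ikj}_s = Q^{ij}(s,k,M_s,\alpha_s)\mathbb{I}_{\{Z_s=k\}}+\mathbb{I}_{\{Z_s\neq k\}}, \qquad \mu^{k}_s = \lambda^k(s,M_s)\mathbb{I}_{\{Z_s=k-1\}}+\mathbb{I}_{\{Z_s\neq k-1\}},
\]
which are $\mathfrak{F}$-predictable since $\alpha$ is $\mathfrak{F}$-predictable and $Z,M$ are $\mathfrak{G}$-adapted.

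The plan is to proceed in three steps. First, I would show that the density is positive and has $\mathbb{P}$-expectation one. Because $Q$, $\psi$, $\Psi$, $\lambda$ and (on the relevant sets) the integrands $\mu^{ikj}$, $\mu^k$ are uniformly bounded and $T$ is finite, each factor $L^{\mu^{ikj}}$ and $L^{\mu^k}$ satisfies a Novikov-type condition and is a genuine (uniformly integrable) martingale; hence $\mathbb{E}[L_T^{\mu^{ikj}}]=\mathbb{E}[L_T^{\mu^{k}}]=1$. Second, since the underlying Poisson processes $N^{ikj}$ and $N^k$ together with $X_0$ are mutually independent under $\mathbb{P}$, the expectation of the product equals the product of expectations, so the total density integrates to one and $\mathbb{P}^\alpha$ is a probability measure. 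Third, because $X_0$ does not enter the density, the restriction of $\mathbb{P}^\alpha$ to $\sigma(X_0)$ agrees with that of $\mathbb{P}$.

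Once $\mathbb{P}^\alpha$ is a probability measure, I would verify the intensities by applying the Girsanov theorem for each counting process individually. Conditionally on the realisations of the other Poisson processes, the density change for the process $N^{ikj}$ is exactly $L_T^{\mu^{ikj}}$; by the cited result, this produces $(\mathfrak{F},\mathbb{P}^\alpha)$-intensity $\mu^{ikj}_s = \lambda^{ikj}_s$. The argument for $N^k$ is analogous and yields $\lambda^k_s$.

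The main subtlety will be to justify rigorously that a simultaneous, multiplicative change of measure on many independent Poisson processes preserves the intensities computed one-at-a-time. The cleanest way to handle this is to work sequentially: introduce an intermediate measure that changes the intensity of only one of the processes, observe that the integrands for the remaining factors are still predictable with respect to the (unchanged) filtration, and iterate over the finite index set $\{(i,k,j)\}\cup\{k\}$. Since the number of Poisson processes is finite (this is the $n$-shock setting, Section~\ref{sec:measure_change_bounded}), this finite induction presents no measurability or integrability issue. The boundedness of $Q$ and $\lambda$ ensures that all intermediate densities remain true martingales, so one can iteratively apply the scalar Watanabe theorem without invoking any infinite-product or monotone-class argument.
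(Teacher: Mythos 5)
Your plan shares the paper's high-level Girsanov/Dol\'eans--Dade strategy, but there is a genuine gap in your argument that $\mathbb{E}\bigl[\de\mathbb{P}^\alpha/\de\mathbb{P}\bigr]=1$. You claim that because $X_0$ and the driving Poisson processes are mutually independent under $\mathbb{P}$, the expectation of the product of density factors equals the product of their expectations. That step is unjustified: the factor for a given $(i,k,j)$ is \emph{not} a functional of $N^{ikj}$ alone. Its integrand $\mu^{ikj}_s = Q^{ij}(s,k,M_s,\alpha_s)\mathbb{I}_{\{Z_s=k\}}+\mathbb{I}_{\{Z_s\neq k\}}$ involves $Z_s$ (a functional of $(N^l)_l$), $M_s$ (which is $\mathfrak{G}$-adapted), and $\alpha_s$ (which is $\mathfrak{F}$-predictable and may depend on the entire history of \emph{all} driving processes, including $N^{ikj}$ itself through $X$). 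The individual Dol\'eans--Dade factors are therefore coupled, so independence of the underlying Poisson processes does not give independence of the factors, and the product-of-expectations step fails for general admissible $\alpha$.

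The paper avoids this entirely. It writes the density as a \emph{single} Dol\'eans--Dade exponential $\mathcal{E}[\theta^\alpha]$ of a finite sum of compensated-Poisson stochastic integrals (which coincides with your product since the driving processes do not jump simultaneously). To upgrade the local martingale to a true martingale it establishes the pathwise bound
\[
\sup_{t\in[0,T]}\bigl|\mathcal{E}[\theta^\alpha]_t\bigr| \le e^{S^2(n+1)T}\bigl(\max\{Q_{\max},1\}\bigr)^{Y_1}\, e^{nT}\bigl(\max\{\lambda_{\max},1\}\bigr)^{Y_2},
\]
with $Y_1,Y_2$ Poisson counts of the total jumps; since Poisson variables have finite exponential moments this gives $L^1$-domination, hence martingality, for \emph{any} bounded $\mathfrak{F}$-predictable $\alpha$ with no independence needed. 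This estimate is precisely the step your proposal elides. Your ``sequential intermediate measures'' idea at the end is in the spirit of the paper's Remark~\ref{rem:change_of_measure_two_step} and can in principle repair the argument, but then it is no longer a one-line appeal: at each stage you still have to verify that the next factor is a true martingale under the previous intermediate measure, which again requires an integrability estimate of the above type. Finally, the paper checks the $\mathbb{P}^\alpha$-intensities by a direct It\^o product-rule computation on $\Theta^\alpha\cdot\overline{N}^{ikj}$ (essentially re-proving the scalar Watanabe theorem in situ), whereas you invoke the theorem; that part of your plan is fine once the measure change itself is justified.
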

	
	In particular $\mathbb{P}^\alpha(\Delta N_t^{ikj} \neq 0) = 0$ and $\mathbb{P}^\alpha(\Delta N^k_t \neq 0) = 0$ for all $t \in [0,T]$, so we have $\Delta X_t = 0$ and $\Delta Z_t = 0$ $\mathbb{P}^\alpha$-a.s.\ for all $t\in[0,T]$.
	
	\begin{proof}
		Since $N^{ikj}$ and $N^k$ are standard Poisson processes under $\mathbb{P}$, the compensated processes $\tilde{N}^{ikj}$ and $\tilde{N}^l$,
		$$\tilde{N}^{ikj}_t \defined N^{ikj}_t - t
		\quad \text{and} \quad 
		\tilde{N}_t^l = N_t^l -t,\quad t\in[0,T]$$
		are $(\mathfrak{F},\mathbb{P})$-martingales for all $i, j \in \mathbb{S}$,  $k \in \{0,1,\ldots, n\}$  and $l \in \{1, \ldots, n\}$ with $i \neq j$.
		We define $\theta^\alpha = (\theta^\alpha_t)_{t \in [0,T]}$ by
		\begin{align*}
			\theta_t^\alpha &\defined \sum_{i, j \in \mathbb{S}: i \neq j} \sum_{k=0}^n \int_0^t \left( Q^{ij}(s,k, M_s, \alpha_s) -1 \right) \mathbb{I}_{\{Z_s=k\}} \de \tilde{N}^{ikj}_s \\
			&\quad + \sum_{l=1}^n \int_0^t \left( \lambda^l(s,M_s) -1\right) \mathbb{I}_{\{Z_s=l-1\}} \de \tilde{N}_s^l, \quad t \in [0,T]. 
		\end{align*}
		The Dol\'{e}ans-Dade exponential $\mathcal{E}[\theta^\alpha]$ is a local $(\mathfrak{F}, \mathbb{P})$-martingale with
		\begin{align*}
			\mathcal{E}[\theta^\alpha]_t &= \prod_{i, j \in \mathbb{S}: i \neq j} \prod_{k =0}^n \left( \exp \left\{ \int_0^t (1- Q^{ij}(s,k,M_s,\alpha_s)) \mathbb{I}_{\{Z_s=k\}} \de s \right\} \right. \\
			&\qquad \left. \cdot \prod_{s \in (0,t]: \Delta N_s^{ikj} \neq 0} \left(Q^{ij}(s,k, M_s, \alpha_s) \mathbb{I}_{\{Z_s=k\}} + \mathbb{I}_{\{Z_s \neq k\}}  \right) \right) \\
			&\quad \cdot \prod_{l =1}^n \left( \exp \left\{ \int_0^t (1- \lambda^l(s,M_s)) \mathbb{I}_{\{Z_s = l-1\}} \de s \right\} \right. \\
			&\qquad \left. \cdot \prod_{s \in (0,t] \Delta N_s^l \neq 0} (\lambda^l(s,M_s) \mathbb{I}_{\{Z_s = l-1\}} + \mathbb{I}_{\{Z_s \neq l-1\}}) \right), \quad t \in [0,T].
		\end{align*}
		Writing 
		\begin{align*}
			Y_1 &\defined \sum_{i,j \in \mathbb{S}: i \neq j, k \in \{0,1, \ldots, n\}} N_T^{ikj} \sim_{\mathbb{P}} \text{Poi} (S(S-1)(n+1) T) \\
			Y_2 &\defined  \sum_{l=1}^n N_T^l \sim_{\mathbb{P}} \text{Poi} (nT) 
		\end{align*} 
		we obtain
		\[
		\sup_{t \in [0,T]} | \mathcal{E}[\theta^\alpha]_t | \le e^{S^2 (n+1) T} \cdot (\max\{Q_\text{max}, 1\})^{Y_1}  \cdot e^{nT} \cdot (\max \{\lambda_\text{max},1\})^{Y_2}.
		\] 
		Since $\sup_{t \in [0,T]} | \mathcal{E}[\theta^\alpha]_t |$ is integrable, it follows that $\mathcal{E}[\theta^\alpha]$ is an $(\mathfrak{F},\mathbb{P})$-martingale.
		Noting that $\mathcal{E}[\theta^\alpha]_0 = 1$ by construction, we conclude that $\mathbb{P}^\alpha$ is a well-defined probability measure on $\mathfrak{A}$ with density process 
		\[
		\left. \frac{\de \mathbb{P}^\alpha}{\de \mathbb{P}} \right|_{\mathfrak{F}_t} = \mathcal{E}[\theta^\alpha]_t \quad\text{for } t \in [0,T].
		\]
		Since $\mathcal{E}[\theta^\alpha]_0=1$, we obtain $\frac{\de \mathbb{P}^\alpha}{\de \mathbb{P}}|_{\mathcal{F}_0}=1$ and therefore $\mathbb{P}^\alpha = \mathbb{P}$ on $\sigma(X_0)$.
		
		Let $i,j \in \mathbb{S}$ and $k \in \{0,1, \ldots, n\}$ such that $i \neq j$. Since $\mathbb{P}^\alpha \ll \mathbb{P}$ the process $N^{ikj}$ is a $\mathbb{P}^\alpha$-counting process. 
		Therefore, it suffices to show that $\overline{N}^{ikj} = (\overline{N}^{ikj}_t)_{t \in [0,T]}$,
		\[
		\overline{N}^{ikj}_t \defined N_t^{ikj} - \int_0^t \left(Q^{ij}(s,k, M_s, \alpha_s) \mathbb{I}_{\{Z_s=k\}} + \mathbb{I}_{\{Z_s \neq k\}} \right) \de s, \quad t \in [0,T]
		\]
		is a local $(\mathfrak{F}, \mathbb{P}^\alpha)$-martingale.
		By the Bayes rule it suffices to show that $\Theta^\alpha \cdot \overline{N}^{ikj}$ is a local $(\mathfrak{F}, \mathbb{P})$-martingale, where $\Theta^\alpha \defined \mathcal{E}[\theta^\alpha]$.
		To establish this, note that by the product rule
		\[
		\de \left(\Theta_t^\alpha \cdot \overline{N}_t^{ikj}\right)
		= \Theta_{t-}^\alpha \de \overline{N}^{ikj}_t + \overline{N}^{ikj}_{t-} \de \Theta_t^\alpha + \de \left[\Theta^\alpha, \overline{N}^{ikj} \right]_t.
		\]
		Using that $N^{ikj}$ and any other process out of
		\[
		(N^{i'k'j'})_{i',j' \in \mathbb{S}, k' \in \mathbb{W}: i' \neq j' \wedge (i',k',j') \neq (i,k,j)} \quad \text{and} \quad (N^l)_{l \in \{1, \ldots,n\}}
		\]
		do almost surely not jump simultaneously, it follows that
		\[
		\de \left[\Theta^\alpha, \overline{N}^{ikj} \right]_t = \Theta_{t-}^\alpha (Q^{ij}(t,k,M_t, \alpha_t)-1) \mathbb{I}_{\{Z_t=k\}} \de N^{ikj}_t.
		\]
		Thus we have
		\begin{align*}
			\de \left(\Theta_t^\alpha \cdot \overline{N}_t^{ikj}\right)
			&= \Theta_{t-}^\alpha \de N^{ikj}_t - \Theta_{t-}^\alpha \left(Q^{ij}(t,k,M_t, \alpha_t) \mathbb{I}_{\{Z_t=k\}} + \mathbb{I}_{\{Z_t\neq k\}} \right) \de t + \overline{N}^{ikj}_{t-} \de \Theta_t^\alpha \\
			&\quad  + \Theta_{t-}^\alpha Q^{ij}(t,k,M_t, \alpha_t) \mathbb{I}_{\{Z_t= k\}} \de N^{ikj}_t - \Theta_{t-}^\alpha \mathbb{I}_{\{Z_t= k\}} \de N^{ikj}_t \\
			&= \Theta_{t-}^\alpha \mathbb{I}_{\{Z_t \neq k\}} \de \overline{N}_t^{ikj} + \overline{N}^{ikj}_{t-} \de \Theta_t^\alpha + \Theta_{t-}^\alpha Q^{ij}(t,k,M_t, \alpha_t) \mathbb{I}_{\{Z_t= k\}} d \overline{N}^{ikj}_t
		\end{align*}
		and $\Theta^\alpha \cdot \overline{N}^{ikj}$ is indeed a local $(\mathfrak{F}, \mathbb{P})$-martingale.
		
		An analogous argument applies to $N^l$ for $l\in\{0,1,\ldots,n\}$.
		It is clear that $N^l$ is a $\mathbb{P}^\alpha$-counting process, and it remains to show that $\overline{N}^l=\{\overline{N}_t^l \}$,
		\[
		\overline{N}_t^l \defined N_t^l - \int_0^t \left(\lambda^l(s,M_s) \mathbb{I}_{\{Z_s = l-1\}} + \mathbb{I}_{\{Z_s \neq l-1\}} \right) \de s,\quad t\in[0,T]
		\]
		is a local $(\mathfrak{F},\mathbb{P}^\alpha)$-martingale.
		As above, since there are no simultaneous jumps we obtain 
		\begin{align*}
			d \left( \Theta_t^\alpha \cdot \overline{N}_t^l \right)
			&= \Theta^\alpha_{t-} \de \overline{N}_t^l + \overline{N}_{t-}^l \de \Theta_t^\alpha + \de \left[ \Theta^\alpha, \overline{N}^l \right]_t \\
			&= \Theta^\alpha_{t-} \de N_t^l - \Theta_{t-}^\alpha \left(\lambda^l(t,M_t) \mathbb{I}_{\{Z_t = l-1\}} + \mathbb{I}_{\{Z_t \neq l-1\}} \right) \de t + \overline{N}_{t-}^l \de \Theta_t^\alpha \\
			&\quad + \Theta_{t-}^\alpha \lambda^l(t,M_t) \mathbb{I}_{\{Z_t = l-1\}} \de N_t^l - \Theta_{t-}^\alpha  \mathbb{I}_{\{Z_t = l-1\}} \de N_t^l \\
			&= \Theta_{t-}^\alpha \mathbb{I}_{\{Z_t \neq l-1\}} \de \overline{N}_t^l +\overline{N}_{t-}^l \de \Theta_t^\alpha + \Theta_{t-}^\alpha \lambda^l(t,M_t) \mathbb{I}_{\{Z_t = l-1\}} \de \overline{N}_t^l
		\end{align*}
		and thus $\Theta^\alpha \cdot \overline{N}^l$ is a local $(\mathfrak{F}, \mathbb{P})$-martingale.
	\end{proof}
	
	\begin{remark}
		\label{rem:change_of_measure_two_step}
		The probability measure $\mathbb{P}^\alpha$ can be seen as the result of two subsequent changes of measure via 
		\[
		\frac{\de \mathbb{P}^\alpha}{\de \mathbb{P}} = \prescript{}{Z}{\Theta^\alpha} \cdot \prescript{}{X}{\Theta^\alpha}
		\]
		where 
		\begin{align*}
			\prescript{}{Z}{\Theta^\alpha} &= \prod_{l =1}^n \left( \exp \left\{ \int_0^T (1- \lambda^l(s,M_s)) \mathbb{I}_{\{Z_s = l-1\}} \de s \right\} \right. \\
			&\qquad \left. \cdot \prod_{s \in (0,t] \Delta N_s^l \neq 0} (\lambda^l(s,M_s) \mathbb{I}_{\{Z_s = l-1\}} + \mathbb{I}_{\{Z_s \neq l-1\}}) \right) \\
			\prescript{}{X}{\Theta^\alpha} &= \prod_{i, j \in \mathbb{S}: i \neq j} \prod_{k =0}^n \left(  \exp \left\{ \int_0^T (1-Q^{ij}(s,k,M_s,\alpha_s)) \mathbb{I}_{\{Z_s = k\}} \de s \right\} \right.  \\
			&\qquad \left. \cdot \prod_{s \in (0,T]: \Delta N_s^{ikj} \neq 0} \left(Q^{ij}(s,k,M_s,\alpha_s) \mathbb{I}_{\{Z_s = k\}} + \mathbb{I}_{\{Z_s \neq k\}} \right)  \right).
		\end{align*}
		In particular, given a fixed process $\{M_t\}_{t \in [0,T]}$,  both $ \prescript{}{Z}{\Theta^\alpha}$ and $\prescript{}{X}{\Theta^\alpha}$ are martingales. Hence, we can view the change of measure as a two-step procedure: Using $\prescript{}{Z}{\Theta^\alpha}$ we change the intensities of $N^k$ to be as desired. Thereafter, using $\prescript{}{X}{\Theta^\alpha}$, we change the intensities of $ N^{ikj}$ to be as desired. This immediately implies, that  the choice of the strategy $\alpha$ does not influence the evolution of the common noise, i.e. the distribution of $\prescript{}{Z}{N}^k$ is the same for any probability measure $\mathbb{P}^\alpha$, $\alpha \in \mathcal{A}$. \remEnde
	\end{remark}

	\subsection{Model Setup with an Unbounded Number of Shocks}
	\label{sec:measure_change_unbounded}
	
	In the following we use the notation of Section~\ref{sec:approximation}.
	Similarly as in \eqref{eq:DefPAlpha}, for each admissible strategy $\alpha\in\mathcal{A}$ we define $\mathbb{P}^{\alpha,\infty}$ by  
	\begin{align*}
		\frac{\de \mathbb{P}^{\alpha, \infty}}{\de \mathbb{P}} &= \prod_{i, j \in \mathbb{S}: i \neq j} \prod_{k =0}^{Z_T} \left(  \exp \left\{ \int_0^T (1-Q^{ij}(s,k,M_s,\alpha_s)) \mathbb{I}_{\{Z_s = k\}} \de s \right\} \right.  \\
		&\qquad \left. \cdot \prod_{s \in (0,T]: \Delta N_s^{ikj} \neq 0} \left(Q^{ij}(s,k,M_s,\alpha_s) \mathbb{I}_{\{Z_s = k\}} + \mathbb{I}_{\{Z_s \neq k\}} \right)  \right) \\
		&\quad \cdot \prod_{l =1}^{Z_T} \left( \exp \left\{ \int_0^T (1-\lambda^l(s,M_s)) \mathbb{I}_{\{Z_s = l-1\}}  \de s \right\} \right. \\
		&\qquad \left. \cdot \prod_{s \in (0,T]: \Delta N_s^l \neq 0} \left( \lambda^l(s,M_s) \mathbb{I}_{\{Z_s = l-1\}}  + \mathbb{I}_{\{Z_s \neq l-1\}} \right) \right).
	\end{align*}
	Comparing this with the definition \eqref{eq:DefPAlpha} of $\mathbb{P}^{\alpha,n}$ (denoted by $\mathbb{P}^\alpha$ in Appendix~\ref{sec:measure_change_bounded}), we see that the only difference is that the product extends to $Z_T$ instead of $n$. 
	Hence, it follows immediately that $\frac{\de\mathbb{P}^{\alpha, \infty}}{\de\mathbb{P}} = \frac{\de \mathbb{P}^{\alpha,n}}{\de \mathbb{P}}$ on $\{Z_T \le n\}$.
	
	\begin{remark}
		The change of measure is well-defined since the products in the definition of $\frac{\de\mathbb{P}^{\alpha, \infty}}{\de\mathbb{P}}$ have only finitely many factors a.s.
		Indeed, $Z$ is again a unit-intensity Possion process and hence $Z_T$ is a.s.\ finite.\remEnde
	\end{remark}
	
	We next provide the analog of Proposition~\ref{Thm:ChangeOfMeasure}, showing that $N^k$ and $N^{ikj}$ have the desired intensities under $\mathbb{P}^{\alpha,\infty}$.
	
	\begin{proposition}
		For every admissible strategy $\alpha\in\mathcal{A}$, $\mathbb{P}^{\alpha,\infty}$ is a well-defined probability measure on $(\Omega,\mathfrak{A})$ with $\mathbb{P}^{\alpha,\infty} = \mathbb{P}$ on $\sigma(X_0)$.
		Moreover, for all $k\in\{1,\ldots,n\}$ the process $N^k$ is a counting process with $(\mathfrak{F}, \mathbb{P}^{\alpha, \infty})$-intensity $\lambda^k=(\lambda^k_t)_{t \in [0,T]}$,
		\[
		\lambda^k_t \defined \lambda^k(t,M_t) \mathbb{I}_{\{Z_t=k-1\}} + \mathbb{I}_{\{Z_t\neq k-1\}},\quad t\in[0,T]
		\]
		and for all $i,j\in\mathbb{S}$ and all $k\in\{0,1,\ldots,n\}$ with $i \neq j$ the process $N^{ikj}$ is a counting process with $(\mathfrak{F},\mathbb{P}^{\alpha,\infty})$-intensity $\lambda^{ikj}=(\lambda^{ikj}_t)_{t \in [0,T]}$,
		\[
		\lambda^{ikj}_t \defined Q^{ij}(t,k,M_t,\alpha_t) \mathbb{I}_{\{Z_t=k\}} + \mathbb{I}_{\{Z_t\neq k\}}, \quad t \in [0,T].
		\]
		Furthermore, we have $\frac{\de\mathbb{P}^{\alpha,\infty}}{\de\mathbb{P}} = \frac{\de\mathbb{P}^{\alpha,n}}{\de\mathbb{P}}$ on $\{Z_T \le n\}$.
	\end{proposition}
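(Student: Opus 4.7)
The plan is to bootstrap from the finite-shock result of Proposition~\ref{Thm:ChangeOfMeasure} using the identity $\frac{\de\mathbb{P}^{\alpha,\infty}}{\de\mathbb{P}} = \frac{\de\mathbb{P}^{\alpha,n}}{\de\mathbb{P}}$ on $\{Z_T \le n\}$ and passing to the limit $n\to\infty$. As a preliminary observation, under $\mathbb{P}$ the process $Z$ gains at most one unit per jump of the underlying unit-intensity Poisson processes $N^k$, so $Z_T$ is $\mathbb{P}$-almost surely finite and $\mathbb{P}(Z_T \le n) \to 1$ as $n\to\infty$. Consequently the random products defining $\frac{\de\mathbb{P}^{\alpha,\infty}}{\de\mathbb{P}}$ contain only finitely many nontrivial factors almost surely, yielding a well-defined non-negative random variable.

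The density identity on $\{Z_T \le n\}$ is then verified by inspecting the extra factors in the $n$-version indexed by $k,l \in \{Z_T+1,\ldots,n\}$. For the $N^{ikj}$-factors with $k>Z_T$ the indicator $\mathbb{I}_{\{Z_s=k\}}$ vanishes identically on $[0,T]$, so both the exponential and the jump-product terms collapse to $1$; the $N^l$-factors with $l-1>Z_T$ are handled analogously. Once the identity is in hand, $\mathbb{P}^{\alpha,\infty}(\Omega)=1$ follows by monotone convergence:
\[
\mathbb{E}\!\left[\tfrac{\de\mathbb{P}^{\alpha,\infty}}{\de\mathbb{P}}\right] = \lim_{n\to\infty}\mathbb{E}\!\left[\tfrac{\de\mathbb{P}^{\alpha,\infty}}{\de\mathbb{P}}\mathbb{I}_{\{Z_T\le n\}}\right] = \lim_{n\to\infty}\mathbb{E}\!\left[\tfrac{\de\mathbb{P}^{\alpha,n}}{\de\mathbb{P}}\mathbb{I}_{\{Z_T\le n\}}\right] = \lim_{n\to\infty}\mathbb{P}^{\alpha,n}(Z_T\le n) = 1,
\]
where the final limit uses the estimate $\mathbb{P}^{\alpha,n}(Z_T>n)\le(1-e^{-\max\{\lambda_{\max},1\}T})^n \to 0$ established in the proof of Lemma~\ref{lemma:ApproxValueFkt}. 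The equality $\mathbb{P}^{\alpha,\infty}=\mathbb{P}$ on $\sigma(X_0)$ is immediate since the density depends only on the Poisson processes, which are independent of $X_0$ under $\mathbb{P}$.

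For the intensity claims, introduce the $\mathfrak{F}$-stopping times $\tau_n=\inf\{t\in[0,T]:Z_t>n\}\wedge T$, which satisfy $\tau_n \nearrow T$ $\mathbb{P}$-almost surely, and hence $\mathbb{P}^{\alpha,\infty}$-almost surely by absolute continuity. On the event $\{t\le\tau_n\}$ we have $Z_t\le n$, so by the density identity the restrictions of $\mathbb{P}^{\alpha,\infty}$ and $\mathbb{P}^{\alpha,n}$ to $\mathfrak{F}_{t\wedge\tau_n}$ coincide. Proposition~\ref{Thm:ChangeOfMeasure} then yields that the stopped compensated processes $\overline{N}^k_{\,\cdot\,\wedge\tau_n}$ and $\overline{N}^{ikj}_{\,\cdot\,\wedge\tau_n}$ with compensators as in the statement are $(\mathfrak{F},\mathbb{P}^{\alpha,\infty})$-martingales. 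Since $\tau_n\nearrow T$, this sequence provides a localization, so the (non-stopped) compensated processes are $(\mathfrak{F},\mathbb{P}^{\alpha,\infty})$-local martingales, which is equivalent to the claimed intensities. The main obstacle will be the pathwise bookkeeping behind the density identity: for boundary indices $l$ close to $Z_T+1$ the indicators need not vanish identically on the continuous part, and one must combine the jump-product term with the exponential to confirm the required cancellation; the correct argument uses that the Poisson processes $N^l$ cannot jump in the region $\{Z_s=l-1\}$ without contradicting the assumed value of $Z_T$ on $\{Z_T\le n\}$.
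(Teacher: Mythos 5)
Your plan is a genuine alternative to the paper's argument, which does not bootstrap from the $n$-shock case at all: there the authors work directly with $\theta^\alpha_t = \sum_{k=0}^{Z_t}\int_0^t(\cdot)\de\overline{N}^{ikj}_s + \sum_{l=1}^{Z_t}\int_0^t(\cdot)\de\overline{N}^l_s$, verify that these random-sum integrals are martingales (via a truncation and dominated convergence argument), form the Dol\'eans-Dade exponential, and prove it is a true martingale by bounding $\sup_t|\mathcal{E}[\theta^\alpha]_t|$ by an integrable random variable built from $Z_T$ and an independent family of Poisson variables. Your monotone-convergence computation $\mathbb{E}[\tfrac{\de\mathbb{P}^{\alpha,\infty}}{\de\mathbb{P}}]=\lim_n\mathbb{P}^{\alpha,n}(Z_T\le n)=1$ is a clean way to get normalization without that integrability estimate, and it is sound (the identity $\tfrac{\de\mathbb{P}^{\alpha,\infty}}{\de\mathbb{P}}=\tfrac{\de\mathbb{P}^{\alpha,n}}{\de\mathbb{P}}$ on $\{Z_T\le n\}$ and the tail estimate for $\mathbb{P}^{\alpha,n}(Z_T>n)$ are both asserted in the paper itself).

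The genuine gap is in the intensity step. You write that since $Z_t\le n$ on $\{t\le\tau_n\}$, ``by the density identity the restrictions of $\mathbb{P}^{\alpha,\infty}$ and $\mathbb{P}^{\alpha,n}$ to $\mathfrak{F}_{t\wedge\tau_n}$ coincide.'' This does not follow from the \emph{terminal} density identity on $\{Z_T\le n\}$: for $A\in\mathfrak{F}_{t\wedge\tau_n}$ one has $\mathbb{P}^{\alpha,\infty}(A)-\mathbb{P}^{\alpha,n}(A)=\mathbb{E}\big[(\tfrac{\de\mathbb{P}^{\alpha,\infty}}{\de\mathbb{P}}-\tfrac{\de\mathbb{P}^{\alpha,n}}{\de\mathbb{P}})\mathbb{I}_A\mathbb{I}_{\{Z_T>n\}}\big]$, and you have no control over the two densities on $\{Z_T>n\}$. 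What you actually need is a \emph{process-level} identity $D^\infty_{t\wedge\tau_n}=D^n_{t\wedge\tau_n}$ for the density processes; establishing this requires knowing that $D^\infty$ is the Dol\'eans-Dade exponential of $\theta^\alpha$ and that $\theta^\alpha$ agrees with $\theta^{\alpha,n}$ on the stochastic interval $[0,\tau_n]$ — precisely the structural information the paper's proof supplies and that your approach was designed to circumvent. In short, the terminal-density identity lets you agree the measures on $\mathfrak{F}_{\tau_n}$ restricted to the event $\{Z_T\le n\}$, but not on $\mathfrak{F}_{\tau_n}\cap\{Z_T>n\}$, which is exactly where the localization argument needs it. Separately, your closing remark about ``combining the jump-product term with the exponential'' for the boundary index $l=Z_T+1$ does not resolve the issue: the jump product there is empty (hence $1$) and contributes no cancellation, so that factor is nontrivial and the bookkeeping is more delicate than you suggest.
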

	
	\begin{proof}
		Most steps are analogous to the proof of Proposition~\ref{Thm:ChangeOfMeasure}, hence we only detail the relevant adjustments.
		
		The definition of $\theta^\alpha$ is slightly adjusted to read
		\begin{align*}
			\theta_t^\alpha &\defined \sum_{i, j \in \mathbb{S}: i \neq j} \sum_{k=0}^{Z_t} \int_0^t \left( Q^{ij}(s,k, M_s, \alpha_s) -1 \right) \mathbb{I}_{\{Z_s=k\}} \de \overline{N}^{ikj}_s \\
			&\quad + \sum_{l=1}^{Z_t} \int_0^t \left( \lambda^l(s,M_s) -1\right) \mathbb{I}_{\{Z_s=l-1\}} \de \overline{N}_s^l, \quad t \in [0,T]. 
		\end{align*} 
		While the process defined in the proof of Proposition~\ref{Thm:ChangeOfMeasure} is clearly a local martingale, here we require an additional argument to deal with the random sums
		\[
		\sum_{k=0}^{Z_t} \int_0^t (Q^{ij}(s,k,M_s, \alpha_s)-1) \mathbb{I}_{\{Z_s = k\}} \de \overline{N}_s^{ikj}
		\] 
		and 
		\[
		\sum_{l=1}^{Z_t} \int_0^t (\lambda^l(s,M_s)-1)) \mathbb{I}_{\{Z_s = l-1\}} \de \overline{N}_s^l.
		\]
		We demonstrate this for the first sum since the proofs are completely analogous.
		For $n\in\mathbb{N}$ define the process $L^n = (L_t^n)_{t \in [0,T]}$,
		\[
		L_t^n \defined \sum_{k=0}^n \int_0^t (Q^{ij}(s,k,M_s,\alpha_s)-1) \mathbb{I}_{\{Z_s=k\}} \de \overline{N}_s^{ikj}, \quad t \in [0,T].
		\]
		It is clear that each $L^n$ is a local martingale, $n\in\mathbb{N}$; since we have
		\[
		\mathbb{E} \left[ \sup_{t \in [0,T]} |L_t^n| \right] \le (Q_\text{max} +1) \mathbb{E} [Y] + T
		\] 
		where $Y = \sum_{k=0}^\infty \mathbb{I}_{\{Z_s=k\}} N_T^{ikj} \sim_{\mathbb{P}} \text{Poi}(T)$ it follows that $L^n$ is in fact a martingale.
		Since the processes $(L^n)_{n\in\mathbb{N}}$ converge almost surely uniformly to the process $L= \{L_t\}$,
		\[
		L_t \defined \sum_{k=0}^{Z_t} \int_0^t (Q^{ij}(s,k, M_s, \alpha_s)-1) \mathbb{I}_{\{Z_s = k\}} \de \overline{N}_s^{ikj}
		\]
		dominated convergence implies that $L$ is a martingale.	
		
		The construction of $\mathcal{E}[\theta^\alpha]$ as a Dol\'{e}ans-Dade exponential is literally the same as in the proof of Proposition~\ref{Thm:ChangeOfMeasure}.
		It is clear that $\mathcal{E}[\theta^\alpha]$ is a local martingale; to demonstrate that it is a true martingale, we again show that $\sup_{t \in [0,T]} |\mathcal{E}[\theta^\alpha]_t|$ is integrable.
		This requires an additional argument.
		Setting
		\begin{align*}
			\tilde{Y} &\defined \sum_{k=0}^{Z_T} \tilde{Y}_k \quad \text{with} \quad \tilde{Y}_k =\sum_{i,j \in \mathbb{S}:i \neq j}  N_T^{ikj} \overset{\text{iid}}{\sim_{\mathbb{P}} } \text{Poi}(S(S-1)T)
		\end{align*} 
		we obtain
		\[
		\sup_{t \in [0,T]} |\mathcal{E}[\theta^\alpha]_t| \le e^{S^2T} \cdot \left( \max \{Q_\text{max},1\}\right)^{\tilde{Y}} \cdot e^T \cdot \left( \max \{\lambda_\text{max},1\}\right)^{Z_T}.
		\] 
		Using that $(\tilde{Y}_k)_{k \in \mathbb{N}_0}$ an $Z_T$ are independent, we obtain for any $a_1, a_2 \in \mathbb{R}$ that
		\begin{align*}
			\mathbb{E} \left[ e^{a_1 \tilde{Y}} e^{a_2 Z_T} \right] 
			&=\mathbb{E} \left[ \sum_{n=0}^\infty \mathbb{I}_{\{Z_T = n\}} \exp \left(a_1 \sum_{k=0}^{Z_T} \tilde{Y}_k \right) e^{a_2 Z_T} \right] \\
			&= \sum_{n=0}^\infty \mathbb{E} \left[  \mathbb{I}_{\{Z_T = n\}} \exp \left(  a_1 \sum_{k=0}^n \tilde{Y}_k \right) e^{a_2 n} \right] \\
			&= \sum_{n=0}^\infty \mathbb{P} (Z_T = n)  \mathbb{E} \left[ \exp \left(  a_1 \sum_{k=0}^n \tilde{Y}_k \right) \right] e^{a_2 n} \\
			&= \sum_{n=0}^\infty \mathbb{P} (Z_T = n) \exp \left( S(S-1)T (e^{a_1}-1) \right)^{n+1} e^{a_2 n} \\
			&= \exp \left(S(S-1)T (e^{a_1}-1) \right) \mathbb{E} \left[ \exp \left( Z_T \left( S(S-1)T(e^{a_1}-1) + a_2 \right) \right) \right] \\
			&= \exp \left(S(S-1)T (e^{a_1}-1) \right) \exp \left( T \exp ( S(S-1)T(e^{a_1}-1)+a_2) -1 \right) < \infty
		\end{align*} hence the upper bound is indeed integrable.

		The rest of the proof, including the verification of the relevant jump intensities, is completely analogous to the proof of Proposition~\ref{Thm:ChangeOfMeasure}.
	\end{proof}
	
	\section{Proof of the Existence and Uniqueness Theorem}
	\label{appendix:existence}
	
	In this section we show that under suitable assumptions there exists a unique solution to the forward-backward system \eqref{eq:FB1}-\eqref{eq:FB5} on a small time horizon. The proof is based on Banach's fixed point theorem.
	However, it is not possible to imitate the standard proofs that are available for mean field games without common shocks for two reasons: First, it is not clear whether a solution of \eqref{eq:FB1} subject to \eqref{eq:FB3} exists, since the function $v^j(s,\overrightarrow{(u,s)})$ shows up in the right-hand side and it is a priori not clear whether it is measurable in $s$. Second, proving that the function that maps $\mu$ to the solution of \eqref{eq:FB1} and \eqref{eq:FB3} as well as the function that maps $v$ to the solution of \eqref{eq:FB2}, \eqref{eq:FB4} and \eqref{eq:FB5} is Lipschitz and computing its Lipschitz constant is also not straightforward due to the hierarchical structure of the solutions.
	
	\begin{assumption}
		\label{assumption:Banach}
		\begin{itemize}
			\item[(i)] There is a constant $L_{\hat{\psi}}>0$ such that the reduced-form running reward function $\hat{\psi}$ satisfies
			\[
			||\hat{\psi}(t,u, m_1,v_1) - \hat{\psi}(t,u,m_2,v_2)|| \le L_{\hat{\psi}} (||m_1-m_2||+||v_1-v_2||) 
			\] for all $(t,u) \in \text{TS}$, all $m_1,m_2 \in \mathcal{P}(\mathbb{S})$ and all $v_1,v_2 \in \mathbb{R}^S$. 
			\item[(ii)] There is a constant $L_{\hat{Q}}>0$ such that the reduced-form intensity matrix function $\hat{Q}$ satisfies
			\[
			||\hat{Q}(t,u,m_1,v_1) - \hat{Q}(t,u,m_2,v_2)|| \le L_{\hat{Q}} \left( ||m_1 - m_2|| + ||v_1-v_2|| \right)
			\] for all $(t,u) \in \text{TS}$, $m_1, m_2 \in \mathcal{P}(\mathbb{S})$ and all $v_1,v_2 \in \mathbb{R}^S$.
			\item[(iii)] There is a constant $L_\Psi>0$ such that the terminal reward function satisfies
			\[
			||\Psi(k,m_1) - \Psi(k,m_2)|| \le L_\Psi ||m_1-m_2||
			\]
			for all $k \in \{0, 1, \ldots, n\}$ and all $m_1, m_2 \in \mathcal{P}(\mathbb{S})$.
			\item[(iv)] There is a constant $L_\lambda>0$ such that
			\[
			|\lambda^k(t,m_1) - \lambda^k(t,m_2)| \le L_\lambda ||m_1-m_2||
			\] for all $t \in [0,T]$, $k \in \{1, \ldots, n \}$ and $m_1, m_2 \in \mathcal{P}(\mathbb{S})$.
			\item[(v)] The relocation function satisfies
			\[
			J^i(t,m_1)=J^i(t,m_2)
			\]
			for all $t \in [0,T]$, $i \in \mathbb{S}$ and $m_1, m_2 \in \mathcal{P}(\mathbb{S})$.
		\end{itemize}
	\end{assumption}
	
	To formulate the precise statement let us introduce the following notation for the maximal values of the functions specifying rewards and dynamics:
	\begin{equation}
		\label{eq:appendix_constants}
		\begin{aligned}
			\Psi_{\text{max}} &\defined \sup_{\substack{k \in \{0,1, \ldots, n\} \\ m \in \mathcal{P}(\mathbb{S})}} ||\Psi(k,m)|| &
			\psi_\text{max} &\defined \sup_{\substack{t \in [0,T], k \in \{0,1, \ldots, n\} \\ m \in \mathcal{P}(\mathbb{S}), a \in \mathbb{A}}} ||\psi(t,k,m,a)|| \\
			Q_\text{max} &\defined \sup_{\substack{t \in [0,T], k \in \{0,1, \ldots, n\} \\ m \in \mathcal{P}(\mathbb{S}), a \in \mathbb{A}}} ||Q(t,k,m,a)|| &
			\lambda_\text{max} &\defined \sup_{\substack{t \in [0,T], k \in \{1, \ldots, n\} \\ m \in \mathcal{P}(\mathbb{S})}} \lambda^k(t,m) \\
			J_\text{max} &\defined \sup_{t \in [0,T], m \in \mathcal{P}(\mathbb{S}), i \in \mathbb{S}} |\{j : J^i(t,m)=j\}|.    
		\end{aligned}
	\end{equation}
	Moreover, we introduce the following constants:
	\begin{align*}
		v_\text{max} &\defined \left( \Psi_\text{max} + \psi_\text{max} T\right) \exp  \left( \left( Q_\text{max} + \lambda_\text{max} \right) T \right) \sum_{i=0}^{n} \left( \exp \left( \left( Q_\text{max} + \lambda_\text{max} \right) T \right) \lambda_\text{max} T \right)^i \\
		K_1 &\defined L_{\hat{\psi}} + L_{\hat{Q}} v_\text{max} + 2 v_\text{max} L_\lambda \\
		K_2 &\defined L_{\hat{\psi}} + v_\text{max} L_{\hat{Q}} + Q_\text{max} + \lambda_\text{max}.
	\end{align*}
	
	\begin{theorem}
		\label{Theorem:ExistenceUniqueness}
		Let Assumption~\ref{assumption:Banach} hold and assume that $T$ satisfies 
		\[
		L_{\hat{Q}} T \left( \sum_{i=0}^n (\exp (Q_\text{max} + L_{\hat{Q}}T))^{i+1} (J_{\text{max}})^i \right) (L_\psi + K_1 T) \exp(K_2 T) \left( \sum_{i=0}^n (\exp(K_2T) \lambda_\text{max} T)^i \right) <1.
		\]
		Then there exists a unique solution of \eqref{eq:FB1}-\eqref{eq:FB5}.
	\end{theorem}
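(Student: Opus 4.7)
The plan is to apply Banach's fixed point theorem to the map $\Phi$ defined as follows: given a regular function $\mu:\text{TS}\to\mathcal{P}(\mathbb{S})$ satisfying the initial condition \eqref{eq:FB4} and the consistency condition \eqref{eq:FB5}, first solve the backward equation \eqref{eq:FB1} subject to the terminal condition \eqref{eq:FB3} to obtain $v=v[\mu]$, then use this $v$ to solve the forward system \eqref{eq:FB2}, \eqref{eq:FB4}, \eqref{eq:FB5} to obtain $\mu^{\ast}=\Phi(\mu)$. A fixed point of $\Phi$ is exactly a solution of \eqref{eq:FB1}--\eqref{eq:FB5}. The well-definedness of $v[\mu]$ and of $\mu^{\ast}[v]$ rests on the level-by-level recursion outlined in Remarks~\ref{remark:ODEv} and~\ref{remark:ODEmu}, formalized in Lemmas~\ref{lemma:appendix_existence_v} and~\ref{lemma:appendix_existence_mu}: $v$ is solved in descending order of $Z(u)$ starting from $Z(u)=n$, while $\mu$ is solved in ascending order starting from $Z(u)=0$. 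I equip the domain of $\Phi$ with the supremum norm on $\text{TS}$.

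As a preliminary step I establish the uniform bound $\|v[\mu](t,u)\|\le v_\text{max}$. For $Z(u)=n$ no shock term appears in \eqref{eq:FB1}, so a backward Gronwall argument combined with the bounds $\Psi_\text{max}$ and $\psi_\text{max}$ gives an explicit bound. At each lower level of the recursion, the new jump contribution $\lambda^{Z(u)+1}(v^{J^i}(t,\overrightarrow{(u,t)})-v^i)$ uses values of $v$ from the strictly higher level, where the bound is already known; Gronwall then produces a further factor $\exp((Q_\text{max}+\lambda_\text{max})T)$, and $\lambda_\text{max}T$ times the previous-level bound is added. Summing these telescoping expressions over all $n+1$ levels reproduces exactly the constant $v_\text{max}$ stated before the theorem.

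Next I bound $v[\mu_1]-v[\mu_2]$. Subtracting the two copies of \eqref{eq:FB1} and invoking Assumption~\ref{assumption:Banach}(i), (ii), (iv) together with the a priori bound $v_\text{max}$, the driving term is controlled by $K_1\|\mu_1-\mu_2\|+K_2\|v_1-v_2\|$ plus the jump contribution $\lambda_\text{max}\|v_1-v_2\|(t,\overrightarrow{(u,t)})$. Here Assumption~(v) that $J^i$ is constant in $m$ is essential: otherwise a term $v^{J^i(t,\mu_1(t-,u))}-v^{J^i(t,\mu_2(t-,u))}$ would appear on the right that is not even obviously Lipschitz in $\mu$. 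For $Z(u)=n$, the terminal Lipschitz bound $L_\Psi$ combined with Gronwall gives a $\mu$-Lipschitz constant of $(L_\Psi+K_1T)\exp(K_2T)$. The descending recursion in $Z(u)$ then multiplies this constant at each level by $\exp(K_2T)\lambda_\text{max}T$, and the geometric sum yields the factor $(L_\Psi+K_1T)\exp(K_2T)\sum_{i=0}^n(\exp(K_2T)\lambda_\text{max}T)^i$.

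The forward half proceeds analogously in ascending $Z(u)$: Assumption~(ii) and Gronwall on each no-shock subinterval contribute the exponential Lipschitz factor, while crossing each shock time via the consistency condition \eqref{eq:FB5} multiplies the error by at most $J_\text{max}$, the maximum size of a fiber of $J^i$. The resulting Lipschitz constant of $\mu^\ast[\cdot]$ in $v$ is the remaining factor $L_{\hat{Q}} T \sum_{i=0}^n (\exp (Q_\text{max} + L_{\hat{Q}}T))^{i+1} (J_{\text{max}})^i$ that appears in the hypothesis. Composing the two Lipschitz estimates gives the Lipschitz constant of $\Phi$, and the stated smallness assumption on $T$ is precisely the requirement that this constant be strictly less than $1$; Banach's theorem then delivers existence and uniqueness of a fixed point. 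The main obstacle is the careful execution of both recursive Gronwall arguments so that the telescoping sums match the constants in the theorem; keeping track of the hierarchical dependence of $v(\cdot,u)$ on $v(\cdot,\overrightarrow{(u,t)})$ above and of $\mu(\cdot,u)$ on $\mu(t_u,\overleftarrow{u})$ below is what produces the two geometric series in $n$.
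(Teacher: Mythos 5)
Your proposal is correct and follows essentially the same route as the paper: decompose the map into $\overleftarrow{F}$ (backward, Lemmas~\ref{lemma:appendix_existence_v} and \ref{Lemma:LipschitzLeftarrow}) and $\overrightarrow{F}$ (forward, Lemmas~\ref{lemma:appendix_existence_mu} and \ref{Lemma:LipschitzRightarrow}), derive the $v_\text{max}$ bound and the two Lipschitz constants via level-by-level Gronwall recursions in $Z(u)$, pick up the factor $J_\text{max}$ when crossing shock times in the forward half, and conclude by Banach on the composed contraction. You also correctly identify the essential role of Assumption~\ref{assumption:Banach}(v) in keeping the jump term tractable.
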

	
	Since all norms on $\mathbb{R}^S$ are equivalent the concrete choice of the norm $||\cdot||$ in Assumption~\ref{assumption:Banach} is immaterial. To simplify our calculations in the following we will use the maximum norm and a compatible matrix norm. Moreover, let us define the space \[
	\mathcal{M} \defined  \{ \mu: \text{TS} \rightarrow \mathcal{P}(\mathbb{S}) | \; \mu \text{ is } Q_\text{max}\text{-Lipschitz continuous in } t \}
	\] and equip it with the uniform norm $||\mu|| \defined \sup_{(t,u) \in \text{TS}} ||\mu(t,u)||$. Since $B(\text{TS}, \mathcal{P}(\mathbb{S}))$ is a complete metric space and $\mathcal{M}$ is a closed subset, $\mathcal{M}$ is a complete metric space.
	We set 
	\[
	\mathcal{V} \defined \{v:\text{TS} \rightarrow \mathbb{R}^S | \; v \text{ is absolutely continuous in } t \text{ and satisfies } ||v|| \le v_\text{max}\}.
	\]
	
	For ease of reference we state the classical forward Gronwall estimate (e.g., \cite[Lemma 2.7]{Teschl2012ODE}) as well as a corresponding backward estimate.
	\begin{lemma}
		\label{lemma:appendix_gronwall}
		[Gronwall estimates] Assume that $f: [0,T] \rightarrow \mathbb{R}$ is continuous, $C \in \mathbb{R}$ and $\beta \ge 0$. Then:
		\begin{itemize}
			\item[(i)] (forward estimate) If $f(t) \le C + \beta \int_0^t f(s) \de s$, $t \in [0,T]$, then $f(t) \le C \exp (\beta t)$ for all $t \in [0,T]$.
			\item[(ii)] (backward estimate) If $f(t) \le C + \beta \int_t^T f(s) \de s$, $t \in [0,T]$, then $f(t) \le C \exp (\beta(T-t))$ for all $t \in [0,T]$.
		\end{itemize}
	\end{lemma}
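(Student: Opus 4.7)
The plan is to handle both estimates by the classical integrating-factor argument. For part (i), I would define the auxiliary function $u(t) \defined C + \beta \int_0^t f(s)\de s$, so that the hypothesis says precisely $f(t) \le u(t)$. Since $f$ is continuous, $u$ is continuously differentiable with $u'(t) = \beta f(t) \le \beta u(t)$, the last inequality using $\beta \ge 0$ and $f \le u$. Multiplying by the integrating factor $e^{-\beta t}$ gives $(e^{-\beta t} u(t))' \le 0$, so $e^{-\beta t} u(t)$ is non-increasing and therefore
\[
u(t) \le u(0) e^{\beta t} = C e^{\beta t}, \qquad t\in[0,T].
\]
Combining with $f(t) \le u(t)$ yields the forward estimate. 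Since this is precisely the statement cited from \cite[Lemma~2.7]{Teschl2012ODE}, one could alternatively appeal directly to that reference.

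For part (ii), I would run the symmetric argument adapted to integration over $[t,T]$. Set $w(t) \defined C + \beta \int_t^T f(s)\de s$, so that $f(t) \le w(t)$ by hypothesis. Then $w$ is continuously differentiable with $w'(t) = -\beta f(t) \ge -\beta w(t)$, again using $\beta \ge 0$ and $f \le w$. Rewriting as $w'(t) + \beta w(t) \ge 0$ and multiplying by $e^{\beta t}$ yields $(e^{\beta t} w(t))' \ge 0$, so $e^{\beta t} w(t)$ is non-decreasing on $[0,T]$. Consequently
\[
e^{\beta t} w(t) \le e^{\beta T} w(T) = e^{\beta T} C,
\]
which rearranges to $w(t) \le C e^{\beta(T-t)}$, and combined with $f(t) \le w(t)$ gives the backward estimate. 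Alternatively, one could reduce (ii) to (i) via the time-reversal substitution $\tilde f(s) \defined f(T-s)$, which converts the hypothesis of (ii) into the hypothesis of (i) on $[0,T]$ with the same constants $C$ and $\beta$.

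The only point requiring a small remark is that the conclusion must hold without any sign restriction on $C$. This is automatic in the argument above: the monotonicity inequalities $f \le u$ and $f \le w$, together with $\beta \ge 0$, are the only ingredients used, and neither requires $u$ or $w$ to be non-negative; in particular, $e^{-\beta t} u(t) \le C$ and $e^{\beta t} w(t) \le e^{\beta T} C$ are valid for every real $C$. There is no genuine obstacle in the proof; it is a two-paragraph standard calculation, and the only thing worth flagging is the sign-robustness just described.
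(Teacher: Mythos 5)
Your proof is correct. Note that the paper itself gives no proof of this lemma at all --- it states the forward estimate as classical (citing Teschl, Lemma~2.7) and simply records the backward analogue for ease of reference --- so there is nothing to compare against; you have supplied the standard integrating-factor argument that the authors left implicit. Both parts check out: in (i) the derivative computation $(e^{-\beta t}u(t))'\le 0$ and the evaluation $u(0)=C$ are exactly right, and in (ii) the inequality $w'(t)=-\beta f(t)\ge -\beta w(t)$ correctly uses $f\le w$ together with $\beta\ge 0$. The alternative time-reversal reduction $\tilde f(s)=f(T-s)$ is also valid and is arguably the cleanest way to see that (ii) is not a separate fact. Your closing remark on sign-robustness is apt and worth keeping, since the lemma is applied in Lemmas~\ref{Lemma:LipschitzLeftarrow} and \ref{Lemma:LipschitzRightarrow} with constants $C$ that are manifestly non-negative, but the statement as written allows arbitrary $C\in\mathbb{R}$ and your argument covers that case without modification.
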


	\begin{lemma}
		\label{lemma:appendix_existence_v}
		Let Assumption~\ref{assumption:Banach} hold and let $\mu \in \mathcal{M}$ be given. Then there is a unique solution of \eqref{eq:FB1} and \eqref{eq:FB3}. Moreover, $||v(t,u)|| \le v_\text{max}$, i.e., $v \in \mathcal{V}$.
	\end{lemma}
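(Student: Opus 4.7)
The plan is backward induction on $k = Z(u)$, as suggested in Remark~\ref{remark:ODEv}. For $u$ with $Z(u) = n$ the indicator $\mathbb{I}_{\{Z(u)<n\}}$ in \eqref{eq:FB1} vanishes, leaving an uncoupled Carathéodory ODE on $[0,T]$ with terminal condition $v(T,u) = \Psi(n,\mu(T,u))$. Since $\mu \in \mathcal{M}$ is continuous in $t$ and $\hat\psi, \hat Q$ are bounded and Lipschitz in $(m,v)$ by Assumption~\ref{assumption:Banach}(i)--(ii), the right-hand side is Carathéodory in $t$ and globally Lipschitz in $v$, so standard Picard iteration on the backward integral equation yields a unique absolutely continuous solution $v(\cdot,u)$. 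A backward Gronwall estimate (Lemma~\ref{lemma:appendix_gronwall}(ii)) bounds it by $C_n := (\Psi_\text{max} + \psi_\text{max} T)\exp(Q_\text{max} T)$.

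For the inductive step, I assume the lemma has been established for all $\tilde u$ with $Z(\tilde u) = k+1$, with uniform bound $C_{k+1}$. The only new term in \eqref{eq:FB1} at level $k<n$ is the shock contribution $v^{J^i(t,\mu(t-,u))}(t,\overrightarrow{(u,t)})$, which by Assumption~\ref{assumption:Banach}(v) reduces to $v^{J^i(t)}(t,\overrightarrow{(u,t)})$. The principal obstacle is that $\overrightarrow{(u,t)}$ varies with $t$, so Carathéodory theory only applies once this composition is shown to be Borel measurable in $t$. I would address this by establishing joint Borel measurability of $(t,s) \mapsto v(t,\overrightarrow{(u,s)})$ on its natural triangular domain. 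This property is preserved by each Picard iterate in the construction of the level-$(k+1)$ solutions, because the terminal data $\Psi(k+1,\mu(T,\overrightarrow{(u,s)}))$ and the driver coefficients $\hat\psi, \hat Q, \lambda^{k+2}$ are Borel measurable in $s$ (via Borel measurability of $\mu$ together with Assumption~\ref{assumption:Banach}(iii)--(iv)), and joint measurability passes to the uniform limit of iterates. Since $J^i$ takes values in the finite set $\mathbb{S}$, the map $t \mapsto v^{J^i(t)}(t,\overrightarrow{(u,t)})$ is then a finite sum of measurable pieces and hence measurable. With this verified, the right-hand side at level $k$ is Carathéodory and Lipschitz in $v$, so Picard iteration produces a unique absolutely continuous solution $v(\cdot,u)$.

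Finally, the uniform bound propagates recursively. At level $k<n$, the integrated form of \eqref{eq:FB1} with \eqref{eq:FB3} yields
\[
\|v(t,u)\| \le \Psi_\text{max} + \psi_\text{max} T + \lambda_\text{max} T\, C_{k+1} + (Q_\text{max}+\lambda_\text{max}) \int_t^T \|v(s,u)\|\, \de s,
\]
so Lemma~\ref{lemma:appendix_gronwall}(ii) gives $C_k \le \alpha(\gamma + \beta C_{k+1})$ with $\alpha := \exp((Q_\text{max}+\lambda_\text{max}) T)$, $\beta := \lambda_\text{max} T$, and $\gamma := \Psi_\text{max} + \psi_\text{max} T$. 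Unrolling from $k=n$ (noting $C_n \le \alpha\gamma$) down to $k=0$ produces the geometric sum
\[
C_0 \le \alpha\gamma \sum_{i=0}^{n} (\alpha\beta)^i = v_\text{max},
\]
which matches the stated bound and establishes $v \in \mathcal{V}$. Uniqueness at every level follows from the uniqueness in each Picard step, and propagates through the hierarchy since each level's equation depends only on strictly higher levels.
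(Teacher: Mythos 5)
Your overall structure matches the paper's proof — backward induction on $Z(u)$, a Carathéodory-ODE existence argument at each level, and a uniform bound propagated by backward Gronwall — and your bound calculation is correct: with $\alpha=\exp((Q_\text{max}+\lambda_\text{max})T)$, $\beta=\lambda_\text{max}T$, $\gamma=\Psi_\text{max}+\psi_\text{max}T$, the recursion $C_k\le\alpha(\gamma+\beta C_{k+1})$ starting from $C_n\le\alpha\gamma$ unrolls to exactly the stated $v_\text{max}$, just as in the paper.

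Where you genuinely diverge is the key technical step of establishing regularity in $t$ of the shock term $t\mapsto v^{J^i(t)}(t,\overrightarrow{(u,t)})$ so that the level-$k$ ODE has a Carathéodory right-hand side. The paper proves something stronger than measurability, namely \emph{continuity}: it extends each level-$(k+1)$ solution $v(\cdot,\overrightarrow{(u,s)})$ backward to $[u^{Z(u)},T]$ by freezing $\mu(r,\overrightarrow{(u,s)})=\mu(r,u)$ for $r<s$, views the resulting family as an ODE parameterized by $s$, and invokes Filippov's continuous-dependence result to conclude $s\mapsto\tilde v(s,\overrightarrow{(u,s)})$ is continuous. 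You aim only for Borel measurability propagated through the Picard iterates at level $k+1$. That is a reasonable and more elementary route, but the inductive hypothesis you state — joint Borel measurability of the single slice $(t,s)\mapsto v(t,\overrightarrow{(u,s)})$ — is not quite strong enough to close the loop. When $k+1<n$, the Picard iterates at level $k+1$ themselves contain the level-$(k+2)$ term $v^{J^i(r)}(r,\overrightarrow{(\overrightarrow{(u,s)},r)})$, which introduces a further nested shock time, so the integrand's joint measurability in $(r,s)$ is not a consequence of the hypothesis you stated. The fix is mechanical but should be made explicit: carry as inductive hypothesis the joint Borel measurability of $(t,u)\mapsto v(t,u)$ on $\{(t,u)\in\text{TS}:Z(u)=\ell\}$ for all $\ell\ge k+1$; since $(t,u)\mapsto\overrightarrow{(u,t)}$ is Borel, this makes each level interface cleanly and the diagonal restriction argument applies. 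With that strengthening your approach works and buys weaker structural requirements on $\mu$ (joint Borel measurability suffices, where the paper's continuity-in-parameter argument in effect uses continuity of $\mu(\cdot,\overrightarrow{(u,\cdot)})$ in the shock time), at the cost of a more careful bookkeeping of the inductive statement.
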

	
	\begin{proof}
		We construct the unique solution recursively for all $u$ such that $Z(u)=k$ starting with $k=n$:
		For $u \in \mathbb{R}^n$ such that $Z(u)=n$ the equation \eqref{eq:FB1} reads 
		\[
		\frac{\partial}{\partial t} v(s,u) = - \hat{\psi} (s,u, \mu(s,u),v(s,u)) - \hat{Q}(s,u,\mu(s,u), v(s,u)) v(s,u)
		\] and the terminal condition reads $v(T,u)=\Psi(n,\mu(T,u))$. By Assumption~\ref{assumption:Banach} it is now immediate that the right-hand side is locally bounded, measurable in $s$ and locally Lipschitz-continuous in $v(s,u)$. Thus, by \cite[Theorem 10.XX]{WalterODE1998} a unique solution $v(\cdot, u):[u_n,T] \rightarrow \mathbb{R}^S$ exists.
		Let us remark at this point that if we choose $\mu(s,u)$ for $s <u_n$ appropriately such that $\mu$ remains continuous then by the same argument a unique solution $\tilde{v}(\cdot, u):[0,T] \rightarrow \mathbb{R}^S$ exists, and it coincides with the solution derived above on $[u_n,T]$.
		
		Let us now assume that $k<n$ and that solutions for all $u$ with $Z(u)>k$ have been constructed. Let $u$ be arbitrary such that $Z(u)=k$. In this setting equation \eqref{eq:FB1} reads 
		\begin{align*}
			\frac{\partial}{\partial t} v(s,u) &= - \hat{\psi} (s,u, \mu(s,u),v(s,u)) - \hat{Q}(s,u,\mu(s,u), v(s,u)) v(s,u) \\
			&\quad - \lambda^{k+1} (s,\mu(s,u)) \left( \left( v^{J^i(s,\mu(s-,u))}(s, \overrightarrow{(u,s)}) \right)_{i \in \mathbb{S}} - v(s,u) \right)
		\end{align*} and the terminal condition reads $v(T,u)=\Psi(k,\mu(T,u))$. Whereas it is still immediate that the right-hand-side is locally bounded and locally Lipschitz continuous in $v(T,u)$, it is necessary in order to achieve measurability in $s$ that the function $$s \mapsto \left( v^{J^i(s,\mu(s-,u))}(s, \overrightarrow{(u,s)}) \right)_{i \in \mathbb{S}}$$ is measurable. By Assumption~\ref{assumption:Banach} it suffices to prove that $s \mapsto v^i(s,\overrightarrow{(u,s)})$ is continuous in $s$:
		For each $s> u_k$ we choose $\mu(t,\overrightarrow{(u,s)}) = \mu(t,u)$ for all $u_k<t<s$. Then as discussed above we obtain a unique solution $\tilde{v}(\cdot, u):[u_k,T] \rightarrow \mathbb{R}^S$ that coincides on $[s,T]$ with the solution derived for $\overrightarrow{(u,s)}$ in the preceding step. However, we can view these solutions $\tilde{v}(\cdot,\overrightarrow{(u,s)})$ as solutions of an ordinary differential equation with parameter $s$. Noting that the right-hand-side of the ordinary differential equation is continuous in both variables $s$ and $v$, we obtain that the solutions $\tilde{v}(t,\overrightarrow{(u,s)})$ depends continuously on the parameter $s$ \cite[Theorem 1.1.6]{FilippovDiscontiODE1988}. Thus, in particular the function $s \mapsto \tilde{v}(s,\overrightarrow{(u,s)})$ is continuous in $s$, which is the desired claim. 
		
		Therefore, again all conditions of Theorem 10.XX in \cite{WalterODE1998} are satisfied and we obtain the existence of  a unique solution $v(\cdot, u):[u_k,T] \rightarrow \mathbb{R}^S$. As before let us remark that if we choose $\mu(s,u)$ for $s <u_k$ appropriately such that $\mu$ remains continuous, then, by the same argument, a unique solution $\tilde{v}(\cdot, u):[0,T] \rightarrow \mathbb{R}^S$ exists and coincides on $[u_k,T]$ with the solution derived before.

		Let us finally prove that $||v(t,u)||$ is bounded by $v_\text{max}$ for all $(t,u)\in \text{TS}$. Let $(t,u) \in \text{TS}$ be arbitrary, then
		\begin{align*}
			||v(t,u)|| &\le ||\Psi(Z(u), \mu(T,u))|| + \int_t^T || \hat{\psi}(s,u, \mu(s,u), v(s,u)) || \de s \\
			&\quad + \int_t^T ||\hat{Q}(s,u,\mu(s,u),v(s,u))|| ||v(s,u)|| \de s \\
			& + \sum_{l=1}^n \mathbb{I}_{\{Z(u)=l-1\}} \int_t^T |\lambda ^l(s, \mu(s,u))| \left( ||(v^{J^i(s,\mu(s-,u))}(s, \overrightarrow{(u,s)})))_{i \in \mathbb{S}}|| + || v(s,u)|| \right) \de s  \\
			&\le \Psi_\text{max} + \psi_\text{max} T + \int_t^T (Q_\text{max} + \lambda_\text{max}) ||v(s,u)|| \de s \\
			&\quad + \sum_{l=1}^n \mathbb{I}_{\{Z(u)=l-1\}} \int_t^T \lambda_\text{max} ||v(s,\overrightarrow{(u,s)}) || \de s.
		\end{align*}
		We will now prove by backward induction on $Z(u)$ that 
		\[
		||v(t,u)|| \le \left( \Psi_\text{max} + \psi_\text{max} T\right) \exp  \left( \left( Q_\text{max} + \lambda_\text{max} \right) T \right) \sum_{i=0}^{n-Z(u)} \left( \exp \left( \left( Q_\text{max} + \lambda_\text{max} \right) T \right) \lambda_\text{max} T \right)^i 
		\] for all $ t \ge \max_{k \in \{1, \ldots, n\}} u_k$:
		If $u$ is such that $Z(u)=n$, then we have
		\[
		||v(t,u)|| \le \left( \Psi_\text{max} + \psi_\text{max} T\right) + \int_t^T \left(Q_\text{max} + \lambda_\text{max} \right) ||v(s,u)|| \de s		
		\] and the claim immediately follows from the backward Gronwall estimate (Lemma~\ref{lemma:appendix_gronwall}). if $u$ is such that $Z(u)=k<n$, then the induction hypothesis yields
		\begin{align*}
			&||v(t,u)|| \\
			&\le \left( \Psi_\text{max} + \psi_\text{max} T\right) + \int_t^T \left( Q_\text{max} + \lambda_\text{max} \right) ||v(s,u) || \de s \\
			&\quad + \lambda_\text{max} T \left( \Psi_\text{max} + \psi_\text{max} T \right) \exp \left( \left( Q_\text{max} + \lambda_\text{max} \right) T \right) \sum_{i=0}^{n-(k+1)} \left( \exp \left( \left( Q_\text{max} + \lambda_\text{max} \right) T \right) \lambda_\text{max} T \right)^i \\
			&= \left( \Psi_\text{max} + \psi_\text{max} T \right)  \sum_{i=0}^{n-k} \left( \exp \left( \left( Q_\text{max} + \lambda_\text{max} \right) T \right) \lambda_\text{max} T \right)^i \\
			&\quad + \int_t^T \left( Q_\text{max} + \lambda_\text{max} \right) ||v(s,u)|| \de s, 
		\end{align*} from which, using the backward Gronwall estimate (Lemma~\ref{lemma:appendix_gronwall}), we directly deduce the desired claim.
	\end{proof}
	
	\begin{lemma}
		\label{lemma:appendix_existence_mu}
		Let Assumption \ref{assumption:Banach} hold and let $v \in \mathcal{V}$ be given. Then there exists a unique solution of \eqref{eq:FB2}, \eqref{eq:FB4} and \eqref{eq:FB5}, which furthermore satisfies $\mu \in \mathcal{M}$.
	\end{lemma}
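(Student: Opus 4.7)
The plan is to construct $\mu$ recursively in $k = Z(u)$, in exactly the direction opposite to the backward recursion used for $v$ in Lemma~\ref{lemma:appendix_existence_v}. For the base case $k = 0$, the only admissible parameter is $u = u_0$ and \eqref{eq:FB2} together with \eqref{eq:FB4} becomes the classical Kolmogorov forward ODE
\[
\dot\mu(t,u_0) = \mu(t,u_0)\, \hat{Q}\bigl(t,u_0,\mu(t,u_0),v(t,u_0)\bigr), \qquad \mu(0,u_0) = m_0.
\]
Since $v \in \mathcal{V}$ is fixed and bounded by $v_\text{max}$, and $\hat{Q}$ is bounded by $Q_\text{max}$ and Lipschitz in $m$ by Assumption~\ref{assumption:Banach}(ii), the right-hand side is bounded and Lipschitz in $\mu$ and measurable in $t$. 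Picard--Lindelöf then yields a unique solution $\mu(\cdot, u_0) : [0,T] \to \mathbb{R}^S$.

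For the induction step, assume $\mu(\cdot, u')$ has been constructed for all $u'$ with $Z(u') \le k$, and fix $u$ with $Z(u) = k+1$, writing $t_u = u^{Z(u)}$. Then $\overleftarrow{u}$ satisfies $Z(\overleftarrow{u}) = k$ so $\mu(t_u, \overleftarrow{u})$ is already known. Because $J$ does not depend on $m$ by Assumption~\ref{assumption:Banach}(v), the consistency condition \eqref{eq:FB5} prescribes \emph{unambiguously}
\[
\mu^j(t_u, u) = \sum_{i \in \mathbb{S}} \mathbb{I}_{\{J^i(t_u) = j\}}\, \mu^i(t_u, \overleftarrow{u}),
\]
which lies in $\mathcal{P}(\mathbb{S})$ since $J^i : \mathbb{S} \to \mathbb{S}$ and $\mu(t_u, \overleftarrow{u}) \in \mathcal{P}(\mathbb{S})$ by the induction hypothesis. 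Using this value as initial condition at time $t_u$, the same Picard--Lindelöf argument on $[t_u, T]$ produces a unique solution $\mu(\cdot, u)$. Iterating this up to $k = n$ solves \eqref{eq:FB2}, \eqref{eq:FB4} and \eqref{eq:FB5} uniquely.

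It remains to verify that $\mu \in \mathcal{M}$. For the probability-simplex property, observe that the row sums of $\hat{Q}$ vanish (since $Q$ is an intensity matrix), so $\tfrac{d}{dt}\sum_j \mu^j(t,u) = 0$ and $\sum_j \mu^j(t,u) = 1$ is preserved between shocks; at shock times the update is a convex combination of basis vectors and therefore preserves both the sum and positivity. Positivity between shocks follows from the standard observation that on $\{\mu^i(t,u) = 0\}$ one has $\dot\mu^i(t,u) = \sum_{j \ne i} \mu^j(t,u)\, \hat{Q}^{ji} \ge 0$ since all off-diagonal entries of $\hat{Q}$ are nonnegative. For the Lipschitz property, \eqref{eq:FB2} yields $\|\dot\mu(t,u)\| \le \|\mu(t,u)\|\,\|\hat{Q}\| \le Q_\text{max}$ in a compatible norm (using $\mu(t,u) \in \mathcal{P}(\mathbb{S})$), so $\mu(\cdot, u)$ is absolutely continuous and $Q_\text{max}$-Lipschitz on $[\max_k u^k, T]$, matching the definition of a regular $\mathcal{M}$-valued function.

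The only delicate point I anticipate is precisely the one that made the $v$-construction subtle: ensuring that when we assemble the hierarchy, the values $\mu(t_u-, u)$ that feed into the initial data at a shock are well-defined and depend measurably on the shock time. Here Assumption~\ref{assumption:Banach}(v) trivialises this issue because $J$ is $m$-independent, so unlike in Lemma~\ref{lemma:appendix_existence_v} one does not need a continuous-dependence-on-parameters argument; the construction is straightforward recursion with no coupling across branches of $\mathbb{U}$ via $m$.
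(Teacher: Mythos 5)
Your proof is correct and follows essentially the same recursion as the paper: build $\mu(\cdot,u)$ forward in $Z(u)$, use \eqref{eq:FB5} to set the initial condition at the shock time, and verify flow invariance of the simplex together with the $Q_\text{max}$-Lipschitz bound. The only minor difference is that the paper invokes a Carath\'eodory-type flow-invariance theorem via Bouligand tangent cones, whereas you argue the Nagumo condition directly (same idea); correspondingly, you should cite a Carath\'eodory existence/uniqueness theorem rather than classical Picard--Lindel\"of, since the right-hand side is only measurable, not continuous, in $t$.
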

	
	\begin{proof}
		We construct the unique solution inductively for all $u$ such that $Z(u)=k$ starting with $k=0$:
		For $k=0$ we have $u=u_0$ and the ordinary differential equation \eqref{eq:FB2} and \eqref{eq:FB4} is a classical ODE with a right-hand side that is locally bounded, measurable in $s$ and locally Lipschitz-continuous in $v(s,u)$. Thus, again by \cite[Theorem 10.XX]{WalterODE1998} a unique solution exists. Noting that for $k>0$ the ordinary differential equation \eqref{eq:FB2} and \eqref{eq:FB5} now again satisfies these properties we immediately obtain the existence and uniqueness of $\mu$. 
		
		To prove that $\mathcal{P}(\mathbb{S})$ is flow invariant for $\mu$, i.e.\ $\mu(s,u) \in \mathcal{P}(\mathbb{S})$ for all $(s,u) \in \text{TS}$, by Theorem 5.3.1 of \cite{CarjaFlowInvCaratheodory} it suffices to demonstrate that
		\[
		\hat{Q}(s,u, m, v(s,u))^Tm \in T_{\mathcal{P}(\mathbb{S})}(m)
		\] 
		for all $m \in \mathcal{P}(\mathbb{S})$ and all $s \in [0,T]$ and $u$ such that $(s,u) \in \text{TS}$, where $T_{\mathcal{P}(\mathbb{S})}(m)$ denotes the Bouligand tangent cone.
		To show this, note first that for points $m$ lying in the interior we have $T_{\mathcal{P}(\mathbb{S})}(m)=\mathbb{R}^S$, so nothing has to be checked. 
		For $m \in \partial  T_{\mathcal{P}(\mathbb{S})}(m)$ we have $y \in T_{\mathcal{P}(\mathbb{S})}(m)$  if and only if $\sum_{i \in \mathbb{S}} y_i = 0$ and $y_i \ge 0$ for all $i$ such that $m_i=0$. Moreover, for such boundary points the vector $ \hat{Q}(s,u, m, v(s,u))^T m$ has non-negative entries at each $j \in \mathbb{S}$ such that $m_j=0$ because the only non-positive column entry of $\hat{Q}(s,u, m, v(s,u))$ has weight zero. Noting that since $\hat{Q}(s,u, m, v(s,u))$ is a conservative generator
		$$\sum_{j \in \mathbb{S}} \sum_{i \in \mathbb{S}} m_i \hat{Q}^{ij}(s,u, m, v(s,u)) = \sum_{i \in \mathbb{S}} \underbrace{\sum_{j \in \mathbb{S}} m_i \hat{Q}^{ij}(s,u, m, v(s,u))}_{=0} = 0,$$
		it follows that $\hat{Q}(s,u, m, v(s,u))^T m \in T_{\mathcal{P}(\mathbb{S})}(m)$ for all $m \in \mathcal{P}(\mathbb{S})$ and all $(s,u) \in \text{TS}$.
		
		Finally, using that $\mu(s,u) \in \mathcal{P}(\mathbb{S})$ it is immediate that $\mu$ is $Q_\text{max}$-Lipschitz since
		\[
		\left| \left| \frac{\partial}{\partial t} \mu(s,u) \right| \right| \le ||\mu(s,u)|| \left| \left| \hat{Q}(s,u, \mu(s,u),v(s,u)) \right| \right| \le Q_\text{max}.\qedhere
		\]
	\end{proof}
	
	Let us now define $\overleftarrow{F}: \mathcal{M} \rightarrow \mathcal{V}$ as the function that maps $\mu \in \mathcal{M}$ to the unique solution of \eqref{eq:FB1} and \eqref{eq:FB3} given $\mu$. Analogously, we define $\overrightarrow{F}: \mathcal{V} \rightarrow \mathcal{M}$ as the function that maps $v \in \mathcal{V}$ to the unique solution of \eqref{eq:FB2}, \eqref{eq:FB4} and \eqref{eq:FB5} given $v$. By Lemma~\ref{lemma:appendix_existence_v} and Lemma~\ref{lemma:appendix_existence_mu} both mappings are well-defined. Now we prove that these maps are Lipschitz continuous:
	
	\begin{lemma}
		\label{Lemma:LipschitzLeftarrow}
		Let Assumption \ref{assumption:Banach} hold.
		The function $\overleftarrow{F}$ is Lipschitz continuous with Lipschitz constant
		\[
		\left( L_{\Psi} + K_1 T \right) \exp (K_2 T) \cdot \sum_{i=0}^{n} \left( \exp (K_2 T) \lambda_\text{max} T \right)^i.
		\]
	\end{lemma}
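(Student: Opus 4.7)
The plan is a backward induction on $Z(u)$, running from $Z(u)=n$ down to $Z(u)=0$, that is essentially a parameterized Gronwall argument. Fix $\mu_1,\mu_2\in\mathcal{M}$ and set $v_j \defined \overleftarrow{F}(\mu_j)$, $j=1,2$. Rewriting \eqref{eq:FB1} subject to \eqref{eq:FB3} in integral form and subtracting, the key quantity $\|v_1(t,u)-v_2(t,u)\|$ is bounded by the terminal difference $\|\Psi(Z(u),\mu_1(T,u))-\Psi(Z(u),\mu_2(T,u))\|$ plus three integrals over $[t,T]$ coming from the $\hat\psi$-, the $\hat Q v$-, and the shock terms in \eqref{eq:FB1}.

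Using Assumption~\ref{assumption:Banach}(i)--(iv), each integrand is estimated in the standard way: for $\hat\psi$ we split into Lipschitz contributions in $m$ and $v$; for $\hat Q v$ we add and subtract $\hat Q(s,u,\mu_2,v_2)v_1$ so that one factor picks up $L_{\hat Q} v_{\max}$ and the other $Q_{\max}$; and for the shock term we exploit Assumption~\ref{assumption:Banach}(v), which ensures $J^i(s,\mu_1(s-,u))=J^i(s,\mu_2(s-,u))$, so the only differences are in $\lambda^{Z(u)+1}$ (giving the factor $2 v_{\max}L_\lambda$ in $K_1$, for the two $v$-terms appearing) and in $v_1-v_2$ itself (giving the factor $\lambda_{\max}$). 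Collecting everything yields
\begin{align*}
\|v_1(t,u)-v_2(t,u)\| &\le L_\Psi\|\mu_1-\mu_2\| + \int_t^T \bigl( K_1 \|\mu_1-\mu_2\| + K_2\|v_1(s,u)-v_2(s,u)\|\bigr)\de s \\
&\quad + \mathbb{I}_{\{Z(u)<n\}}\lambda_{\max}\int_t^T \bigl\|v_1\bigl(s,\overrightarrow{(u,s)}\bigr)-v_2\bigl(s,\overrightarrow{(u,s)}\bigr)\bigr\|\de s,
\end{align*}
uniformly in $(t,u)\in\text{TS}$.

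I then run the backward induction. In the base case $Z(u)=n$ the shock integral vanishes, so the backward Gronwall estimate of Lemma~\ref{lemma:appendix_gronwall}(ii) gives
\[
\|v_1(t,u)-v_2(t,u)\| \le C_n\|\mu_1-\mu_2\|,\qquad C_n \defined (L_\Psi + K_1 T)\exp(K_2 T).
\]
In the inductive step, assume that whenever $Z(\tilde u)=k+1$ we have $\|v_1(\cdot,\tilde u)-v_2(\cdot,\tilde u)\|\le C_{k+1}\|\mu_1-\mu_2\|$. Since $Z(\overrightarrow{(u,s)})=k+1$ whenever $Z(u)=k<n$, the shock integral is controlled by $\lambda_{\max}T\cdot C_{k+1}\|\mu_1-\mu_2\|$, and the backward Gronwall estimate produces
\[
C_k = \bigl(L_\Psi + K_1 T + \lambda_{\max}T\,C_{k+1}\bigr)\exp(K_2 T).
\]
Solving the recursion with $a\defined\exp(K_2 T)\lambda_{\max}T$ and $b\defined (L_\Psi+K_1 T)\exp(K_2 T)$ gives $C_k = b\sum_{i=0}^{n-k}a^i$, whence taking the supremum over $(t,u)\in\text{TS}$ yields the stated Lipschitz constant $C_0$.

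The only non-routine points I anticipate are (a) keeping the measurability arguments of Lemma~\ref{lemma:appendix_existence_v} in mind, so that the integral inequalities above are justified (in particular that $s\mapsto v_j(s,\overrightarrow{(u,s)})$ is measurable), and (b) the bookkeeping of the constants $K_1,K_2$ — one has to be careful to put the factor $L_{\hat\psi}\|v_1-v_2\|$ into $K_2$ rather than $K_1$, and to account for $v^{J^i}$ \emph{and} $v^i$ both being present in the $\lambda$-Lipschitz term, which produces the factor $2v_{\max}L_\lambda$ in $K_1$. Otherwise the argument is a standard hierarchical Gronwall in the number of remaining shocks.
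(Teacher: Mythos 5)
Your proposal is correct and follows the same route as the paper: write \eqref{eq:FB1}--\eqref{eq:FB3} in integral form, estimate each term with Assumption~\ref{assumption:Banach} to arrive at the bound $(L_\Psi + K_1 T)\|\mu-\overline\mu\| + \int_t^T K_2\|v-\overline v\|(s,u)\,\de s + \lambda_{\max}\int_t^T\|v-\overline v\|(s,\overrightarrow{(u,s)})\,\de s$, then run a backward induction in $Z(u)$ coupled with the backward Gronwall estimate, yielding exactly the recursion $C_k = (L_\Psi + K_1T + \lambda_{\max}T\,C_{k+1})\exp(K_2T)$ and hence the stated geometric-sum constant. Your allocation of contributions to $K_1$ versus $K_2$, the use of Assumption~\ref{assumption:Banach}(v) to cancel the $J^i$-dependence, and the provenance of the factor $2v_{\max}L_\lambda$ all match the paper's proof.
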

	
	\begin{proof}
		Let $\mu, \overline{\mu} \in \mathcal{M}$ and let $v = \overleftarrow{F}(\mu)$ and $\overline{v} = \overleftarrow{F}(\overline{\mu})$. Then we have for fixed $(t,u) \in \text{TS}$:
		\begin{align*}
			&||v(t,u) - \overline{v}(t,u)|| \\
			&\le ||v(T,u) - \overline{v}(T,u)|| \\
			& \quad + \int_t^T ||\hat{\psi}(s,u,\mu(s,u), v(s,u)) - \hat{\psi}(s,u, \overline{\mu}(s,u), \overline{v}(s,u)) || \de s \\
			&\quad + \int_t^T ||\left(\hat{Q}(s,u,\mu(s,u), v(s,u)) v(s,u)) - \hat{Q}(s,u, \overline{\mu}(s,u)), \overline{v}(s,u)) \right) \overline{v}(s,u) || \de s \\
			&\quad + \sum_{l=1}^n \mathbb{I}_{\{Z(u)=l-1\}} \int_t^T \left| \left| \lambda^l(s, \mu(s,u)) \left( (v^{J^i(s, \mu(s-,u))}(s, \overrightarrow{(u,s)}))_{i \in \mathbb{S}} - v(s,u) \right) \right. \right.  \\
			&\qquad \left.\left. - \lambda^l(s, \overline{\mu}(s,u)) \left( (\overline{v}^{J^i(s, \overline{\mu}(s-,u))}(s, \overrightarrow{(u,s)}))_{i \in \mathbb{S}} - \overline{v}(s,u) \right)  \right| \right| \de s \\
			& \le ||\Psi(Z(u), \mu(T,u)) - \Psi(Z(u), \overline{\mu}(T,u)) || \\
			&\quad + \int_t^T ||\hat{\psi}(s,u,\mu(u,s), v(u,s)) - \hat{\psi}(s,u, \overline{\mu}(s,u), \overline{v}(s,u)) || \de s \\
			&\quad + \int_t^T ||\hat{Q}(s,u, \mu(s,u), v(s,u)) - \hat{Q}(s,u, \overline{\mu}(s,u), \overline{v}(s,u)) || \cdot ||v(s,u)|| \de s \\
			&\quad + \int_t^T ||\hat{Q}(s,u, \overline{\mu}(s,u), \overline{v}(s,u))|| \cdot ||v(s,u)- \overline{v}(s,u)|| \de s \\
			&\quad + \sum_{l=1}^n \mathbb{I}_{\{Z(u)=l-1\}} \left( \int_t^T |\lambda^l(s,\mu(s,u)) - \lambda^l(s, \overline{\mu}(s,u))| || (v^{J^i(s, \mu(s-,u))}(s, \overrightarrow{(u,s)}))_{i \in \mathbb{S}} - v(s,u) || \de s \right. \\
			&\qquad  +\int_t^T |\lambda^l(s, \overline{\mu}(s,u))|  || (v^{J^i(s, \mu(s-,u))}(s, \overrightarrow{(u,s)}))_{i \in \mathbb{S}} - (\overline{v}^{J^i(s, \overline{\mu}(s-,u))}(s, \overrightarrow{(u,s)}))_{i \in \mathbb{S}} ||   \de s \\
			&\qquad + \left. \int_t^T  |\lambda^l(s, \overline{\mu}(s,u))| ||v(s,u) - \overline{v}(s,u)|| \de s \right) \\
			&\le L_\Psi ||\mu(T,u)- \overline{\mu}(T,u)|| \\
			&\quad + \int_t^T L_{\hat{\psi}} \left( ||\mu(s,u)- \overline{\mu}(s,u)|| + ||v(s,u)- \overline{v}(s,u)|| \right) \de s \\
			&\quad + \int_t^T v_\text{max} L_{\hat{Q}} \left( ||\mu(s,u)- \overline{\mu}(s,u)|| + ||v(s,u)-\overline{v}(s,u)|| \right) \de s \\
			&\quad + \int_t^T Q_\text{max} ||v(s,u)-\overline{v}(s,u)|| \de s \\
			& + \sum_{l=1}^n \mathbb{I}_{\{Z(u)=l-1\}} \left( \int_t^T 2 v_\text{max} L_\lambda ||\mu(s,u)-\overline{\mu}(s,u)|| \de s \right. \\
			&\qquad \left. + \int_t^T \lambda_\text{max} ||v(s,\overrightarrow{(u,s)}) - \overline{v}(s, \overrightarrow{(u,s)})|| \de s + \int_t^T \lambda_\text{max} ||v(s,u)- \overline{v}(s,u)|| \de s \right) \\
			&\le \left( L_\Psi + TL_{\hat{\psi}} + Tv_\text{max}L_{\hat{Q}} + 2v_\text{max}L_\lambda T \right) ||\mu-\overline{\mu}|| \\
			&\quad + \int_t^T \left( L_{\hat{\psi}} + v_\text{max}L_{\hat{Q}} + Q_\text{max} +  \lambda_\text{max} \right) ||v(s,u)-\overline{v}(s,u) || \de s \\
			&\quad + \sum_{l=1}^n \mathbb{I}_{\{Z(u)=l-1\}} \int_t^T  \lambda_\text{max} ||v(s, \overrightarrow{(u,s)}) - \overline{v}(s,\overrightarrow{(u,s)}) || \de s \\
			&= \left( L_{\Psi} + K_1 T \right) || \mu - \overline{\mu}|| + \int_t^T K_2 ||v(s,u)-\overline{v}(s,u)|| \de s \\
			&\quad + \sum_{l=1}^n \mathbb{I}_{\{Z(u)=l-1\}} \int_t^T  \lambda_\text{max} ||v(s, \overrightarrow{(u,s)}) - \overline{v}(s,\overrightarrow{(u,s)}) || \de s.
		\end{align*}
		We now prove by backward induction on $Z(u)$ that 
		\[
		||v(t,u)- \overline{v}(t,u)|| \le \left( L_\Psi + K_1 T \right) \exp (K_2T) \left( \sum_{i=0}^{n-Z(u)} \left( \exp(K_2T) \lambda_\text{max} T \right)^i \right) ||\mu-\overline{\mu}||
		\] for all $t \ge \max_{k \in \{1,\ldots, n\}} u_k$:
		For $u$ such that $Z(u)=n$ we have
		\[
		||v(t,u)-\overline{v}(t,u)|| \le \left( L_{\Psi} + K_1 T \right) || \mu - \overline{\mu}|| + \int_t^T K_2 ||v(s,u)-\overline{v}(s,u)|| \de s
		\] for all $t \ge \max_{k \in \{0,1,\ldots, n\}} u_k$. Thus, by the backward Gronwall estimate (Lemma~\ref{lemma:appendix_gronwall}) the desired claim follows. For $Z(u)=k<n$ we obtain using the induction hypothesis
		\begin{align*}
			&||v(t,u)- \overline{v})(t,u)|| \\
			&\le \left(L_\Psi + K_1 T\right) ||\mu- \overline{\mu}|| + \int_t^T K_2 ||v(s,u)- \overline{v}(s, u)|| \de s \\
			&\quad + \lambda_\text{max} T \left(  L_\Psi + K_1 T \right) \exp(K_2T) \sum_{i=0}^{n-(k+1)} \left( \exp(K_2T) \lambda_\text{max} T \right)^i || \mu- \overline{\mu}|| \\
			&= \left( L_\Psi + K_1 T \right) \left( 1 + \lambda_\text{max} T \exp(K_2 T) \sum_{i=0}^{n-(k+1)} \left( \exp (K_2 T) \lambda_\text{max} T \right)^i \right) ||\mu - \overline{\mu}|| \\
			& \quad  + \int_t^T K_2 ||v(s,u)- \overline{v}(s, u)|| \de s \\
			&= \left( L_\Psi + K_1 T \right) \left(\sum_{i=0}^{n-k} \left( \exp (K_2 T) \lambda_\text{max} T \right)^i \right)  ||\mu - \overline{\mu}|| \\
			& \quad  + \int_t^T K_2 ||v(s,u)- \overline{v}(s, u)|| \de s
		\end{align*} and, again, the desired claim follows by the backward Gronwall estimate (Lemma~\ref{lemma:appendix_gronwall}).
	\end{proof}
	
	\begin{lemma}
		\label{Lemma:LipschitzRightarrow}
		Let Assumption \ref{assumption:Banach} hold. 
		The function $\overrightarrow{F}$ is Lipschitz continuous with Lipschitz contant 
		\[
		L_{\hat{Q}} T \sum_{i=0}^{n} \left( \exp \left( \left( Q_\text{max} + L_{\hat{Q}} \right) T \right) \right)^{i+1} (J_\text{max})^i.
		\]
	\end{lemma}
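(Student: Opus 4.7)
Given $v, \bar v \in \mathcal{V}$, set $\mu \defined \overrightarrow{F}(v)$ and $\bar\mu \defined \overrightarrow{F}(\bar v)$. Following the recursive solution structure of Remark~\ref{remark:ODEmu}, the plan is to prove the Lipschitz bound by forward induction on $k = Z(u)$: I will show that for every $u \in \mathbb{U}$ with $Z(u) = k$,
\[
\sup_{t \in [u^{k}, T]} \|\mu(t,u) - \bar\mu(t,u)\| \le C_k \, \|v - \bar v\|,
\]
where $C_k$ obeys an explicit recurrence and $C_n$ matches the claimed Lipschitz constant. Structurally this is the forward analogue of Lemma~\ref{Lemma:LipschitzLeftarrow}, running forward in $t$ and in $k$.

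For the per-cell estimate, I fix $u$ with $Z(u) = k$ (using $u^{0} = 0$ when $k = 0$), subtract the two instances of \eqref{eq:FB2}, and split
\[
\mu\hat{Q}(\cdot, v) - \bar\mu\hat{Q}(\cdot, \bar v) = (\mu - \bar\mu)\hat{Q}(\cdot, v) + \bar\mu\bigl(\hat{Q}(\cdot, v) - \hat{Q}(\cdot, \bar v)\bigr).
\]
Using $\|\bar\mu(s,u)\| \le 1$, $\|\hat{Q}\| \le Q_\text{max}$, Assumption~\ref{assumption:Banach}(ii), and the forward Gronwall inequality (Lemma~\ref{lemma:appendix_gronwall}(i)), I expect to obtain
\[
\sup_{t \in [u^{k}, T]} \|\mu(t,u) - \bar\mu(t,u)\| \le \bigl(\|\mu(u^{k},u) - \bar\mu(u^{k},u)\| + L_{\hat{Q}} T \|v - \bar v\|\bigr) \exp\bigl((Q_\text{max} + L_{\hat{Q}})T\bigr).
\]
The induction is then closed via the consistency condition. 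For $k = 0$ the cell starts at the fixed value $\mu(0, u_{0}) = m_{0} = \bar\mu(0, u_{0})$, giving $C_{0} = L_{\hat{Q}} T \exp((Q_\text{max} + L_{\hat{Q}})T)$. For $k \ge 1$, \eqref{eq:FB5} together with Assumption~\ref{assumption:Banach}(v) (which forces the jump map to be identical for $\mu$ and $\bar\mu$) yields componentwise $\mu^{j}(u^{k}, u) - \bar\mu^{j}(u^{k}, u) = \sum_{i : J^{i}(u^{k}) = j} \bigl(\mu^{i}(u^{k}, \overleftarrow{u}) - \bar\mu^{i}(u^{k}, \overleftarrow{u})\bigr)$, hence $\|\mu(u^{k}, u) - \bar\mu(u^{k}, u)\| \le J_\text{max}\, C_{k-1} \|v - \bar v\|$ by the inductive hypothesis (noting $Z(\overleftarrow{u}) = k - 1$). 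Substituting into the cell bound gives the recurrence $C_{k} = (J_\text{max} C_{k-1} + L_{\hat{Q}} T) \exp((Q_\text{max} + L_{\hat{Q}})T)$, whose explicit solution is
\[
C_{k} = L_{\hat{Q}} T \sum_{i=0}^{k} \bigl(\exp((Q_\text{max} + L_{\hat{Q}})T)\bigr)^{i+1} J_\text{max}^{i},
\]
and taking $k \le n$ together with the supremum over $u$ recovers the claim.

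The main obstacle I anticipate is the jump step: the clean linear bound with constant $J_\text{max}$ crucially relies on $J^{i}$ being independent of $m$ so that the two jump maps coincide exactly. Without Assumption~\ref{assumption:Banach}(v), the difference at the jump would additionally contain a term from the variation of the jump map itself, contaminating the recurrence with further Lipschitz constants and requiring extra control on $\mu(u^k{-}, \overleftarrow u)$. Beyond that, the argument is routine Gronwall-plus-induction bookkeeping that runs entirely parallel to Lemma~\ref{Lemma:LipschitzLeftarrow}, only in the opposite time and shock direction.
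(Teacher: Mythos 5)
Your argument matches the paper's proof essentially line for line: the same product-rule decomposition of $\mu\hat{Q}(\cdot,v)-\bar\mu\hat{Q}(\cdot,\bar v)$, the same use of $\|\bar\mu\|\le 1$ and $\|\hat Q\|\le Q_\text{max}$, the same forward Gronwall estimate on each cell, and the same forward induction on $Z(u)$ with the jump handled via \eqref{eq:FB5}, Assumption~\ref{assumption:Banach}(v), and a $J_\text{max}$ factor, leading to exactly the recurrence $C_k = (J_\text{max}C_{k-1}+L_{\hat Q}T)\exp((Q_\text{max}+L_{\hat Q})T)$ and its closed form. Your observation on why (v) is indispensable at the jump step is also precisely the point the paper's construction relies on.
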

	
	\begin{proof}
		Let $v, \overline{v} \in \mathcal{V}$ and let $\mu = \overrightarrow{F}(v)$ and $\overline{\mu} = \overrightarrow{F}(\overline{v})$. Using the convention $u^0 \defined 0$ we obtain for fixed $(t,u) \in \text{TS}$:
		\begin{align*}
			&||\mu(t,u)-\overline{\mu}(t,u)|| \\
			&\le \mathbb{I}_{\{u^1 \neq -1\}}||\mu(u^{Z(u)},u)-\overline{\mu}(u^{Z(u)},u)|| \\
			&\quad + \int_{u^{Z(u)}}^t ||\mu(s,u) \hat{Q}(s,u,\mu(s,u), v(s,u)) - \overline{\mu}(s,u) \hat{Q}(s,u, \overline{\mu}(s,u), \overline{v}(s,u))|| \de s \\
			&\le \mathbb{I}_{\{u^1 \neq -1\}} ||\mu(u^{Z(u)},u)-\overline{\mu}(u^{Z(u)},u)|| \\ 
			&\quad + \int_{u^{Z(u)}}^t ||\mu(s,u) - \overline{\mu}(s,u) || \cdot ||\hat{Q}(s,u,\mu(s,u),v(s,u))|| \de s \\
			&\quad + \int_{u^{Z(u)}}^t ||\overline{\mu}(s,u)|| || \hat{Q}(s,u, \mu(s,u), v(s,u))- \hat{Q}(s,u, \overline{\mu}(s,u), \overline{v}(s,u))|| \de s \\
			& \le \mathbb{I}_{\{u^1 \neq -1\}} ||\mu(u^{Z(u)},u)-\overline{\mu}(u^{Z(u)},u)|| + \int_{u^{Z(u)}}^t Q_\text{max} ||\mu(s,u) - \overline{\mu}(s,u) || \de s \\
			&\quad + \int_{u^{Z(u)}}^t L_{\hat{Q}} \left( ||\mu(s,u)-\overline{\mu}(s,u)|| + ||v(s,u)-\overline{v}(s,u)|| \right) \de s \\
			&\le \mathbb{I}_{\{u^1 \neq -1\}} ||\mu(u^{Z(u)},u)-\overline{\mu}(u^{Z(u)},u)||  + L_{\hat{Q}} T ||v-\overline{v}||\\
			&\quad + \int_{u^{Z(u)}}^t (Q_\text{max} + L_{\hat{Q}}) ||\mu(s,u)-\overline{\mu}(s,u)|| \de s.
		\end{align*}
		We now show by induction on $Z(u)$ that 
		\[
		||\mu(t,u)-\overline{\mu}(t,u)|| \le L_{\hat{Q}} T \sum_{i=0}^{Z(u)} \left( \exp \left( \left( Q_\text{max} + L_ {\hat{Q}} \right) T \right) \right)^{i+1} (J_\text{max})^i ||v-\overline{v}||
		\] 
		for all $t \ge \max_{k \in \{1,\ldots, n\}} u_k$:	
		If $Z(u)=0$, then
		\[
		||\mu(t,u)- \overline{\mu}(t,u)|| \le L_{\hat{Q}}T ||v-\overline{v}|| + \int_0^t \left( Q_\text{max} + L_{\hat{Q}} \right) ||\mu(s,u)- \overline{\mu}(s,u)|| \de s,
		\] 
		from which a simple application of the forward Gronwall estimate (Lemma~\ref{lemma:appendix_gronwall}) yields the desired bound.
		If $Z(u)=k>0$, then using the induction hypothesis
		\begin{align*}
			&||\mu(t,u)- \overline{\mu}(t,u)|| \\
			& \le ||\mu(u^{Z(u)},u)- \overline{\mu}(u^{Z(u)},u)|| + L_{\hat{Q}}T ||v-\overline{v}|| + \int_{u^k}^t (Q_\text{max} +L_{\hat{Q}}) ||\mu(s,u)-\overline{\mu}(s,u)|| \de s \\
			&\le J_\text{max} ||\mu(u^{Z(u)}, \overleftarrow{u}) - \overline{\mu}(u^{Z(u)}, \overleftarrow{u})|| + L_{\hat{Q}}T ||v-\overline{v}|| + \int_{u^k}^t (Q_\text{max} +L_{\hat{Q}}) ||\mu(s,u)-\overline{\mu}(s,u)|| \de s \\
			&\le J_\text{max} L_{\hat{Q}} T \left(\sum_{i=0}^{k-1} \left( \exp \left( \left( Q_\text{max} + L_{\hat{Q}} \right) T \right) \right)^{i+1} (J_\text{max})^i \right) ||v-\overline{v}|| \\
			&\quad + L_{\hat{Q}} T ||v-\overline{v}|| + \int_{u^k}^t (Q_\text{max} +L_{\hat{Q}}) ||\mu(s,u)-\overline{\mu}(s,u)|| \de s \\ 
			&= L_{\hat{Q}} T \left(\sum_{i=0}^{k} \left( \exp \left( \left( Q_\text{max} + L_{\hat{Q}} \right) T \right) \right)^i (J_\text{max})^i \right) ||v-\overline{v}|| + \int_{u^k}^t (Q_\text{max} +L_{\hat{Q}}) ||\mu(s,u)-\overline{\mu}(s,u)|| \de s
		\end{align*} and the desired claim immediately follows using the forward Gronwall estimate (Lemma~\ref{lemma:appendix_gronwall}).
	\end{proof}
	
	\begin{proof}[Proof of Theorem~\ref{Theorem:ExistenceUniqueness}]
		By Lemma~\ref{Lemma:LipschitzLeftarrow} and Lemma~\ref{Lemma:LipschitzRightarrow} the map $F: \mathcal{M} \rightarrow \mathcal{M}$, $\mu \mapsto \overrightarrow{F} \left( \overleftarrow{F} (\mu) \right)$ satisfies for $\mu, \overline{\mu} \in \mathcal{M}$
		\begin{align*}
			&||F(\mu) - F(\overline{\mu})|| \\
			&\le L_{\hat{Q}} T \left( \sum_{i=0}^n (\exp (Q_\text{max} + L_{\hat{Q}}T))^{i+1} (J_{\text{max}})^i \right) \\
			&\quad \cdot (L_\psi + K_1 T) \exp(K_2 T) \left( \sum_{i=0}^n (\exp(K_2T) \lambda_\text{max} T)^i \right) ||\mu-\overline{\mu} ||.
		\end{align*}
		Thus $F$ is contractive provided $T$ is chosen as in the claim. Since $\mathcal{M}$ is a complete metric space, Banach's fixed point theorem yields the existence of a unique fixed point. This in turn is equivalent to \eqref{eq:FB1}-\eqref{eq:FB5} having a unique solution.
	\end{proof}
	
	\noindent\textbf{Funding:} The authors thank the German Research Foundation DFG for their support within the Research Training Group 2126 Algorithmic Optimization (ALOP). \\
	
	\noindent\textbf{Competing Interests:} The authors have no relevant financial or non-financial interests to disclose.
	
\end{document}